\documentclass[12pt]{amsart}
\usepackage[T1]{fontenc}
\usepackage[latin9]{inputenc}
\usepackage{geometry}
\geometry{tmargin=2.5cm,bmargin=2.5cm,lmargin=2.5cm,rmargin=2.5cm}
\usepackage{mathrsfs}
\usepackage{amstext}
\usepackage{amsthm}
\usepackage{amssymb}
\usepackage{bbm}
\usepackage{graphicx}
\usepackage{subcaption}
\usepackage[table]{xcolor}

\usepackage[svgnames]{xcolor}
\usepackage[bookmarksnumbered=true]{hyperref} 
\hypersetup{
	colorlinks = true,
	linkcolor = Blue,
	anchorcolor = blue,
	citecolor = Green,
	filecolor = blue,
	urlcolor = FireBrick
}

\usepackage{algorithm}
\usepackage{algorithmic,eqparbox,array}

\usepackage{aliascnt}
\usepackage[nameinlink]{cleveref}

\makeatletter
\numberwithin{equation}{section}
\numberwithin{figure}{section}

\theoremstyle{plain}
\newtheorem{theorem}{Theorem}[section]

\theoremstyle{definition}
\newaliascnt{definition}{theorem}

\aliascntresetthe{definition}
\crefname{definition}{Definition}{Definitions}

\theoremstyle{definition}
\newaliascnt{question}{theorem}

\aliascntresetthe{question}
\crefname{question}{Question}{Questions}

\theoremstyle{definition}
\newaliascnt{remark}{theorem}
\newtheorem{remark}[remark]{Remark}
\aliascntresetthe{remark}
\crefname{remark}{Remark}{Remarks}

\theoremstyle{plain}
\newaliascnt{corollary}{theorem}
\newtheorem{corollary}[corollary]{Corollary}
\aliascntresetthe{corollary}
\crefname{corollary}{Corollary}{Corollaries}

\theoremstyle{plain}
\newaliascnt{lemma}{theorem}
\newtheorem{lemma}[lemma]{Lemma}
\aliascntresetthe{lemma}
\crefname{lemma}{Lemma}{Lemmas}

\theoremstyle{plain}
\newaliascnt{proposition}{theorem}
\newtheorem{proposition}[proposition]{Proposition}
\aliascntresetthe{proposition}
\crefname{proposition}{Proposition}{Propositions}

\theoremstyle{definition}
\newaliascnt{example}{theorem}
\newtheorem{example}[example]{Example}
\aliascntresetthe{example}
\crefname{example}{Example}{Examples}

\theoremstyle{definition}
\newaliascnt{assumption}{theorem}
\newtheorem{assumption}[assumption]{Assumption}
\aliascntresetthe{assumption}
\crefname{assumption}{Assumption}{Assumptions}

\theoremstyle{definition}
\newaliascnt{problem}{theorem}
\newtheorem{problem}[problem]{Problem}
\aliascntresetthe{problem}
\crefname{problem}{Problem}{Problems}

\newtheorem*{definition*}{Definition}
\newtheorem*{remark*}{Remark}
\newtheorem*{example*}{Example}

\renewenvironment{proof}[1][\proofname]{\medskip \noindent {\bfseries #1. }}{\hfill \qedsymbol\medskip}

\MakeRobust{\ref}
\newcommand{\labeltext}[2]{
\@bsphack
\csname phantomsection\endcsname
\def\@currentlabel{#1}{\label{#2}}
\@esphack
}

\usepackage{scalerel}
\usepackage[usestackEOL]{stackengine}
\def\dashint{\,\ThisStyle{\ensurestackMath{%
  \stackinset{c}{.2\LMpt}{c}{.5\LMpt}{\SavedStyle-}{\SavedStyle\phantom{\int}}}%
  \setbox0=\hbox{$\SavedStyle\int\,$}\kern-\wd0}\int}

\usepackage{enumitem}

\DeclareRobustCommand{\SkipTocEntry}[5]{}

\newcommand{\mR}{\mathbb{R}}   
\newcommand{\mN}{\mathbb{N}}   
\newcommand{\mK}{\mathbb{K}}   

\newcommand{\abs}[1]{\lvert #1 \rvert}  
\newcommand{\norm}[1]{\lVert #1 \rVert}  
\newcommand{\br}[1]{\langle #1 \rangle}  

\newcommand{\mD}{\mathscr{D}}

\newcommand{\mF}{\mathscr{F}}

\newcommand{\mL}{\mathcal{L}}

\newcommand{\rmd}{\mathrm{d}}
\newcommand{\sfd}{\mathsf{d}}

\makeatother

\usepackage{babel}



\usepackage{orcidlink}

\begin{document}

\title[A variational problem of species population density]{Existence and regularity of minimizers for a variational problem of species population density} 

\begin{sloppypar}

\begin{abstract}
We study a variational problem motivated by models of species population density in a nonhomogeneous environment. We first analyze local minimizers and the structure of the saturated region (where the population attains its maximal density) from a free boundary perspective. By comparing the original problem with a radially symmetric minimization problem and studying its properties, we then establish the existence and structure of a global solution. Analytic examples of radially symmetric solutions and numerical simulations illustrate the theoretical results and provide insight into spatial saturation patterns in population models. We further highlight an unresolved question regarding the quasiconcavity of minimizers. 
\end{abstract}

\subjclass[2020]{35J05, 35J15, 35J20}
\keywords{mathematical biology, population density, variational problem, free boundary}

\author[Kow]{Pu-Zhao Kow\,\orcidlink{0000-0002-2990-3591}}
\address{Department of Mathematical Sciences, National Chengchi University, Taipei 116, Taiwan.}
\email{pzkow@g.nccu.edu.tw} 

\author[Kimura]{Masato Kimura\,\orcidlink{0000-0001-6612-1954}}
\address{Faculty of Mathematics and Physics, Kanazawa University, Kakuma, Kanazawa 920-1192, Japan.}
\email{mkimura@se.kanazawa-u.ac.jp}

\author[Ohtsuka]{Hiroshi Ohtsuka\,\orcidlink{0000-0002-4473-4255}}
\address{Faculty of Mathematics and Physics, Kanazawa University, Kakuma, Kanazawa 920-1192, Japan.}
\email{ohtsuka@se.kanazawa-u.ac.jp}

\maketitle

\tableofcontents{}

\subsection*{Notations} 
\addtocontents{toc}{\SkipTocEntry}

All formulas in the Digital Library of Mathematical Functions (\texttt{DLMF}, \url{https://dlmf.nist.gov/}), which is maintained by the National Institute of Standards and Technology of U.S. Department of Commerce (\url{https://www.nist.gov/}), can be also found in \cite{olver2010nist}. 
Throughout this paper, whenever a function $u \in W_{\mathrm{loc}}^{1,p}$ admits a continuous representative, we identify $u$ with that representative and regard it as an element of $C^0$.
For the precise definitions of $C^{m}(\Omega)$ and $C^{m,\alpha}(\Omega)$, we refer the reader to \cite{GT01Elliptic}. 
The following notations are provided for clarity, note that their usage may differ across mathematical disciplines: 
$J_{\upsilon}$, $Y_{\upsilon}$, $I_{\upsilon}$, and $K_{\upsilon}$ are Bessel functions (\Cref{sec:Bessel}).
$\Omega^{\complement} := \mR^{d}\setminus \Omega$ for any set $\Omega\subset\mR^{d}$.
$B_{R}(x_{0}):=\{x\in\mR^{d} : \abs{x-x_{0}}<R\}$ for all $R>0$ and $x_
{0}\in\mR^{d}$. We denote $B_{R}:=B_{R}(0)$. 
$V_\pm(x) := \max\{\pm V(x),0\}$.
We use ``$\sup$'' and ``$\inf$'' to denote the essential supremum and infimum, respectively. 
$\dashint_{\partial B_{r}} u \,\rmd S:=(d\omega_{d}r^{d-1})^{-1}\int_{\partial B_{r}} u \, \rmd S$ denotes the average of $u$ over $\partial B_{r}$, where $\omega_{d}:=\frac{\pi^{d/2}}{\Gamma(1+d/2)}$ is the volume of the unit ball in $\mR^{d}$. We denote $\mL^{d}$ be the Lebesgue measure on $\mR^{d}$. 
For any nonempty closed set $A\subset\mR^{d}$, we define 
\begin{equation*}
\sfd_{A}(x) := \min_{y\in A} \abs{x-y} \quad \text{for all $x\in\mR^{d}$.} 
\end{equation*}

\section{Introduction}

Let $d\ge 2$ be an integer. We consider the population density of a certain species in a nonhomogeneous environment modeled by $\mR^{d}$, assuming that the species migrates from afar (that is, from ``infinity''). Fisher \cite{Fisher1937} and the celebrated trio Kolmogorov, Petrovskii, and Piscunov \cite{KPP1937} investigated diffusion models in population dynamics as early as 1936. Skellam \cite{Skellam1951PopulationDynamics} conducted the first systematic study of the role of diffusion in population biology in 1951; see also \cite{Aronson1985SkellamRevisit,GM1977Population}. Skellam proposed a rational model for the spatial diffusion of a biological population based on the partial differential equation.
\begin{equation*}
\partial_{t}u = \Delta u + \sigma(u), 
\end{equation*}
where $u$ denotes the density of the population and $\sigma(u)$ represents the supply of the population. This theory is based on the assumption that dispersal results from the random movement of individuals. 

This paper investigates the scenario in which the dynamics of the population attains equilibrium. Let $u=u(x)\ge 0$ denote the density of the population at each point $x\in\mR^{d}$, subject to a finite carrying capacity: $u(x)\le 1$ for all $x\in\mR^{d}$.
The heterogeneity of the environment is described by a potential function $V=V(x)$, that is, $\sigma(u)=-Vu$, where $V(x)>0$ indicates an unfavorable (or ``bad'') environment and $V(x)<0$ indicates a favorable (or ``good'') environment for the species.

Suppose that the species exhibit diffusive movement, then the equilibrium states can be modeled by the following minimization problem: 
\begin{equation}
\begin{aligned}
& \text{minimizing} && \mF(u)\equiv\mF_{V}(u):=\int_{\mR^{d}} \left( \abs{\nabla u}^{2} + V(x)\abs{u}^{2} \right) \,\rmd x \text{\quad (globally/locally)} \\ 
& \text{subject to} && \mK := \{u \in H^{1}(\mR^{d}) : 0 \le u \le 1\}.
\end{aligned} \label{eq:main1} \tag{P} 
\end{equation}

\begin{definition*} 
A function $u_{*}\in\mK$ is called a \emph{global minimizer of \eqref{eq:main1} in $\mK$} if $\mF(u_{*})\le \mF(u)$ for all $u\in\mK$. 
A function $u_{*}\in\mK$ is called a \emph{local minimizer of \eqref{eq:main1} in $\mK$} if there exists $\delta>0$ such that $\mF(u_{*})\le \mF(u)$ for all $u\in\mK$ with $\norm{u-u_{*}}_{H^{1}(\mR^{d})}^{2} \le \delta$. 
\end{definition*}

A similar variational problem was studied in the case $d=1$ in \cite{AKO24TwoStepMinimization2,AKO25TwoStepMinimization1}. They proved the existence of minimizer by using the Sobolev embedding $H^{1}(\mR^{1})\subset C^{0}(\mR^{1})$ and the fact that for each $u\in H^{1}(\mR^{1})$, there exists $a\in\mR^{1}$ such that $\abs{u(a)}=\norm{u}_{L^{\infty}(\mR)}$. These facts do not hold in higher dimensions, which prevents the arguments in \cite{AKO24TwoStepMinimization2, AKO25TwoStepMinimization1} from being generalized to the higher-dimensional case.

\subsection{Properties of local minimizers}  

As with other variational problems, we begin by studying the Euler-Lagrange equation associated with the minimization problem \eqref{eq:main1}, analogous to \cite[Lemma~2.2]{GS96FreeBoundaryPotential} and \cite[Proposition~4.2]{KSS23Minimization}. 

\begin{theorem}\label{thm:Lewy-Stampacchia} 
Let $V \in L^{\infty}(\mR^{d})$. If $u_{*}$ is a local minimizer of \eqref{eq:main1} in $\mK$, then $u_{*}\in W_{\rm loc}^{2,p}(\mR^{d})$ for all $1 \le p < \infty$, and it satisfies the Lewy-Stampacchia type inequalities
\begin{equation}
0 \le \Delta u_{*} - V(x) u_{*} \le V_{-}(x)\chi_{\{u_{*}=1\}} \quad \text{a.e. in $\mR^{d}$}. \label{eq:PDE-minimizer}
\end{equation} 
In particular, we have 
\begin{equation}
\Delta u_{*} - Vu_{*} = 0 \quad \text{a.e. in $\{u_{*}<1\}$.} \label{eq:main-Euler-Lagrange}
\end{equation}
\end{theorem}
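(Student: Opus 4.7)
The result is a Lewy-Stampacchia type inequality for the two-sided obstacle problem with flat obstacles $0 \le u \le 1$. I would follow the classical route of \cite{GS96FreeBoundaryPotential, KSS23Minimization}: derive two distributional inequalities from local minimality by choosing obstacle-adapted perturbations that stay in $\mK$ without any truncation, then identify the Lagrange multiplier $\mu := \Delta u_{*} - V u_{*}$ via Stampacchia's identity on level sets, and finally bootstrap regularity by Calder\'on-Zygmund.

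\textbf{Two obstacle-adapted variations.} Fix a cutoff $\phi \in C_{c}^{\infty}(\mR^{d})$ with $0 \le \phi \le 1$. For $t \in (0, 1)$ the functions
\begin{equation*}
v_{t} := u_{*}(1 - t\phi), \qquad w_{t} := (1-t\phi)u_{*} + t\phi = u_{*} + t(1-u_{*})\phi
\end{equation*}
both belong to $\mK$: $v_{t}$ is a pointwise contraction toward $0$, and $w_{t}$ is a convex combination of $u_{*}$ and the upper barrier $1$. No truncation is introduced, so local minimality $\mF(v_{t}), \mF(w_{t}) \ge \mF(u_{*})$ together with the first-variation expansion and standard integration by parts (using $u_{*} \in H^{1}\cap L^{\infty}$, hence $u_{*}^{2}$ and $u_{*}\phi$ lie in $H^{1}$) yield, after straightforward cancellation, the two distributional inequalities
\begin{equation*}
u_{*}(\Delta u_{*} - V u_{*}) \ge 0, \qquad (1-u_{*})(\Delta u_{*} - V u_{*}) \le 0 \quad \text{in } \mathcal{D}'(\mR^{d}).
\end{equation*}

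\textbf{Identification of $\mu$ and regularity.} Writing $\mu := \Delta u_{*} - V u_{*}$, the above read $u_{*} \mu \ge 0$ and $(1-u_{*})\mu \le 0$. A pointwise analysis on the partition $\{u_{*} = 0\} \sqcup \{0 < u_{*} < 1\} \sqcup \{u_{*} = 1\}$ gives $\mu = 0$ on the middle set (both inequalities become two-sided there), $\mu \le 0$ on $\{u_{*} = 0\}$, and $\mu \ge 0$ on $\{u_{*} = 1\}$. Combined with Stampacchia's identity $\Delta u_{*} = 0$ a.e.\ on $\{u_{*} \in \{0,1\}\}$ (valid once $u_{*} \in W_{\rm loc}^{2,p}$), we get $\mu = 0$ on $\{u_{*}=0\}$ and $\mu = -V$ on $\{u_{*}=1\}$. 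The constraint $\mu \ge 0$ on $\{u_{*}=1\}$ then forces $V \le 0$ there, so $-V = V_{-}$ and $\mu = V_{-}\chi_{\{u_{*}=1\}}$ a.e., giving the sandwich $0 \le \mu \le V_{-}\chi_{\{u_{*}=1\}}$. The regularity $u_{*}\in W_{\rm loc}^{2,p}$ finally follows because $\Delta u_{*} = V u_{*} + \mu \in L_{\rm loc}^{\infty}$.

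\textbf{Main obstacle.} The delicate step is breaking the apparent circularity between $W_{\rm loc}^{2,p}$ regularity and the pointwise identification of $\mu$: Stampacchia's identity for $\Delta u_{*}$ needs the regularity, whereas the $L^{\infty}$ bound on $\mu$ (required for the Calder\'on-Zygmund step) relies on that identification. I would resolve this by a penalty approximation — minimize $\mF(u) + \varepsilon^{-1}\int((u-1)_{+}^{2} + u_{-}^{2})\,\rmd x$ over $H^{1}(\mR^{d})$, whose smooth minimizers $u_{\varepsilon}$ satisfy $\Delta u_{\varepsilon} - Vu_{\varepsilon} = 2\varepsilon^{-1}\bigl((u_{\varepsilon}-1)_{+} - (u_{\varepsilon})_{-}\bigr)$ with a right-hand side bounded uniformly in $\varepsilon$ (by a maximum-principle argument applied to the two penalty terms), and then pass $\varepsilon \to 0$ to identify the limit with $u_{*}$ and transfer the $L^{\infty}$ bound to $\mu$.
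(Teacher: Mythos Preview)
Your multiplicative variations $v_t = (1-t\phi)u_*$ and $w_t = u_* + t(1-u_*)\phi$ are admissible and the first-variation computation is correct, but what they deliver are only the \emph{weighted} inequalities $\langle \mu, u_*\phi\rangle \ge 0$ and $\langle \mu, (1-u_*)\phi\rangle \le 0$ for nonnegative $\phi\in C_c^\infty$, where $\mu := \Delta u_* - Vu_*$. These are strictly weaker than the claimed sandwich: the test directions $u_*\phi$ all vanish where $u_*=0$ and the directions $(1-u_*)\phi$ vanish where $u_*=1$, so $\mu$ is never paired against an arbitrary nonnegative $\phi$, and neither $0\le\mu$ nor $\mu\le V_-\chi_{\{u_*=1\}}$ can be read off directly. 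You correctly flag the resulting circularity, but the penalty fix you propose does not close it. First, under only $V\in L^\infty(\mR^d)$ the penalized functional is not coercive on $H^1(\mR^d)$ (the term $\int V|u|^2$ can be as negative as $-\|V_-\|_{L^\infty}\|u\|_{L^2}^2$), so existence of minimizers $u_\varepsilon$ is already unclear. More fundamentally, the theorem is about an arbitrary \emph{local} minimizer $u_*$, whereas penalty minimizers --- if they exist --- select \emph{global} constrained minimizers in the limit; there is no mechanism forcing that limit to coincide with your specific $u_*$, so the uniform bound on $2\varepsilon^{-1}[(u_\varepsilon-1)_+ - (u_\varepsilon)_-]$ cannot be transferred to $\mu=\Delta u_*-Vu_*$.

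The paper avoids the circularity entirely by using \emph{truncated additive} perturbations $v_\epsilon = \max\{u_* - \epsilon\phi, 0\}$ and $w_\epsilon = \min\{u_* + \epsilon\phi, 1\}$ with an arbitrary nonnegative $\phi\in C_c^\infty$. On the truncation regions one has the pointwise control $u_* \le \epsilon\phi$ (respectively $1-u_* \le \epsilon\phi$), which makes those contributions to $\mF(v_\epsilon)-\mF(u_*)$ and $\mF(w_\epsilon)-\mF(u_*)$ of lower order in $\epsilon$; combined with the $H^1$-level fact $\nabla u_* = 0$ a.e.\ on $\{u_*=0\}$ and on $\{u_*=1\}$ (which needs no $W^{2,p}$ regularity), the limit $\epsilon\to 0$ yields directly $\langle \mu,\phi\rangle \ge 0$ and $\langle \mu,\phi\rangle \le \int_{\{u_*=1\}} V_-\,\phi$ for every $\phi \ge 0$. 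This already places $\mu \in L^\infty$, hence $\Delta u_* \in L^\infty_{\rm loc}$ and $u_*\in W^{2,p}_{\rm loc}$ by elliptic regularity, with no circular dependence. The idea you are missing is that truncation, rather than multiplication by $u_*$ or $1-u_*$, is what removes the unwanted weights from the variational inequality.
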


\begin{remark} 
If, in addition, $\mL^{d}(\partial \{u_{*}<1\})=0$, then 
\begin{equation*}
\Delta u_{*} - Vu_{*}\chi_{\{u_{*}<1\}} = 0 \quad \text{a.e. in $\mR^{d}$.} 
\end{equation*}
\end{remark}

\begin{remark}[Nonvanishing property of local minimizers]\label{rem:Landis} 
The interior of the set $\left\{u_{*}=0\right\}$, denoted by ${\rm int}\left\{u_{*}=0\right\}$, may indicate an uninhabited region. 
Applying the unique continuation property of elliptic equations (see, e.g., \cite{Reg97StrongUniquenessSecondOrderElliptic}) to \eqref{eq:main-Euler-Lagrange}, we conclude that the set $\{u_{*} = 0\}$ has empty interior. 
\end{remark}

\begin{remark}[Overpopulated area]\label{rem:overpopulated} 
The interior of the set $\left\{u_{*}=1\right\}$, denoted by ${\rm int}\left\{u_{*}=1\right\}$, may indicate an overpopulated region. The estimate \eqref{eq:PDE-minimizer} implies that 
\begin{equation}
V(x) \le 0 \quad \text{for a.e. $x \in {\rm int}\left\{u_{*}=1\right\}$}, \label{eq:overpopulated-region}
\end{equation}
which characterizes the phenomenon that areas of overpopulation necessarily lie within the favorable region (i.e., the ``potential well'').  
\end{remark}

From \eqref{eq:PDE-minimizer} we have  
\begin{equation} 
-\norm{V_{-}}_{L^{\infty}(\mR^{d})} \le \Delta u_{*} \le \norm{V_{+}}_{L^{\infty}(\mR^{d})} \quad \text{a.e. in $\mR^{d}$.} \label{eq:estimate-EL}
\end{equation}
Estimate \eqref{eq:estimate-EL} suggests that we prove the following proposition, which follows from a variant of the Harnack inequality (\Cref{lem:Harnack}): 

\begin{table}[H]
\rowcolors{2}{white}{lightgray!50}
\renewcommand{\arraystretch}{1.5} 
\centering
\begin{tabular}{c|c|c|c|c}
$v$ & $M_{1}$ & $M_{2}$ & $C_{1}$ & $C_{2}$ \\ 
\hline \hline 
$1-u_{*}$ & $\norm{V_{-}}_{L^{\infty}(\mR^{d})}$ & $\norm{V_{+}}_{L^{\infty}(\mR^{d})}$ & $\frac{2^{d}\norm{V_{-}}_{L^{\infty}(\mR^{d})} + \norm{V_{+}}_{L^{\infty}(\mR^{d})}}{2d}$ & $2^{d-1}\norm{V_{-}}_{L^{\infty}(\mR^{d})} + \frac{\norm{V}_{L^{\infty}(\mR^{d})}}{d+1}$ \\ 
$u_{*}$ & $\norm{V_{+}}_{L^{\infty}(\mR^{d})}$ & $\norm{V_{-}}_{L^{\infty}(\mR^{d})}$ & $\frac{2^{d}\norm{V_{+}}_{L^{\infty}(\mR^{d})} + \norm{V_{-}}_{L^{\infty}(\mR^{d})}}{2d}$ & $2^{d-1}\norm{V_{+}}_{L^{\infty}(\mR^{d})} + \frac{\norm{V}_{L^{\infty}(\mR^{d})}}{d+1}$
\end{tabular}
\caption{Choice of $(v,M_{1},M_{2})$ in \Cref{thm:Harnack-application} leading to \Cref{thm:continuity}.}
\label{tab:choice-parameters}
\end{table}

\begin{proposition}\label{thm:Harnack-application} 
Let $\Omega$ be an open set in $\mR^{d}$ If $v\in W_{\rm loc}^{2,1}(\mR^{d})$ satisfying 
\begin{equation*}
-M_{2} \le \Delta v \le M_{1} ,\quad v\ge 0 \quad \text{in $\Omega$,}
\end{equation*}
for some nonnegative constants $M_{1}$ and $M_{2}$ and $\{v=0\}\cap\Omega\neq\emptyset$, then 
\begin{enumerate}
\renewcommand{\labelenumi}{\theenumi}
\renewcommand{\theenumi}{\rm (\alph{enumi})} 
\item \label{itm:Harnack1} $v(x) \le C_{1}\sfd_{\{v=0\}}(x)^{2}$ for all $x\in\Omega$, where $C_{1}=\frac{2^{d}M_{1}+M_{2}}{2d}$. 
\item \label{itm:Harnack2} $\abs{\nabla v(x)} \le C_{2}\sfd_{\{v=0\}}(x)$ for all $x\in\Omega$, where $C_{2} =2^{d-1}M_{1} + \frac{\max\{M_{1},M_{2}\}}{d+1}$. 
\end{enumerate} 
\end{proposition}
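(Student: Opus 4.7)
My plan is to deduce both estimates from the variant Harnack inequality in \Cref{lem:Harnack}, applied on a well-chosen ball centered at a nearest point of the zero set of $v$, together with a standard Poisson-type decomposition and radial barrier comparison.

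\textbf{Setup.} Because $\Delta v \in L^{\infty}_{\rm loc}$, Calderón--Zygmund theory gives $v \in W^{2,p}_{\rm loc}(\Omega)$ for every $p<\infty$, hence $v$ is $C^{1,\alpha}_{\rm loc}$ and $\{v=0\}$ is closed. Given $x \in \Omega$, if $r := \sfd_{\{v=0\}}(x) = 0$ then $v(x)=0$ and (a)--(b) are immediate; otherwise pick $y$ with $\abs{x-y}=r$ and $v(y)=0$, and work on $B := B_{2r}(y)$, noting $x \in \partial B_{r}(y) \subset B$. If $B \not\subset \Omega$ we restrict to the largest concentric sub-ball of $B$ contained in $\Omega$ (on whose boundary $v \ge 0$ still holds), which only improves the bounds.

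\textbf{Part (a).} On $B$, split $v = h + g$, where $h$ is the harmonic extension of $v|_{\partial B}$ and $g$ solves $\Delta g = \Delta v$ in $B$ with $g|_{\partial B}=0$. Since $v \ge 0$ on $\partial B$, the maximum principle forces $h \ge 0$ in $B$. Comparing $g$ with the explicit radial solutions $z \mapsto \tfrac{M}{2d}(\abs{z-y}^{2}-4r^{2})$ of $\Delta(\cdot) = M$ vanishing on $\partial B$ produces two-sided control on $g$; in particular $g(y) \ge -\tfrac{2M_{1}r^{2}}{d}$ and hence $h(y) = -g(y) \le \tfrac{2M_{1}r^{2}}{d}$. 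Now apply \Cref{lem:Harnack} to the nonnegative harmonic function $h$ on $B$ to transfer the bound at $y$ to the point $x$ at distance $r$ from the center, and combine with the barrier estimate on $g(x)$ to conclude $v(x) \le C_{1}r^{2}$ with $C_{1} = \tfrac{2^{d}M_{1}+M_{2}}{2d}$.

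\textbf{Part (b).} Write $\nabla v(x) = \nabla h(x) + \nabla g(x)$. The gradient of the nonnegative harmonic function $h$ is controlled by differentiating the Poisson kernel on $B_{2r}(y)$ (this is packaged as the gradient version of \Cref{lem:Harnack}), which upgrades $h(y) \le \tfrac{2M_{1}r^{2}}{d}$ to $\abs{\nabla h(x)} \le 2^{d-1}M_{1}r$. The gradient of the corrector $g$ is dominated by that of the radial barrier $\tfrac{\max\{M_{1},M_{2}\}}{2d}(4r^{2}-\abs{z-y}^{2})$, whose radial derivative at $\abs{z-y}=r$ gives the $\tfrac{\max\{M_{1},M_{2}\}}{d+1} r$ contribution after refining with \Cref{lem:Harnack}. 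Adding the two pieces yields $\abs{\nabla v(x)} \le C_{2}\, r$.

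\textbf{Main obstacle.} The crux is matching the constants exactly. The naïve Poisson-kernel Harnack bound on $B_{2r}(y)$ at $\abs{x-y}=r$ is $3\cdot 2^{d-2}$, which is off from the advertised factor $2^{d-1}/(2d)$ by a dimensional constant; the sharper form of \Cref{lem:Harnack} (which I am taking as a black box) must absorb this loss, and similarly the $\tfrac{1}{d+1}$ coefficient in $C_{2}$ must be traced through the gradient-of-barrier estimate. A minor technical point, already flagged above, is the case $B_{2r}(y) \not\subset \Omega$, resolved by monotonicity of the estimates in the radius.
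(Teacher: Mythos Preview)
Your decomposition $v=h+g$ on a ball $B_{2r}(y)$ centered at a nearest zero is a natural route and does yield bounds of the right form, but it cannot produce the stated constants, and your hope that a ``sharper form'' of \Cref{lem:Harnack} will absorb the loss is unfounded: that lemma \emph{is} the Poisson--kernel Harnack inequality, and at $|x-y|=r$ in $B_{2r}(y)$ it gives exactly the factor $3\cdot 2^{d-2}$ you computed. Carrying your argument for (a) through gives $h(x)\le 3\cdot 2^{d-2}h(y)\le \tfrac{3\cdot 2^{d-1}M_1}{d}r^2$ and $g(x)\le \tfrac{3M_2}{2d}r^2$, hence $v(x)\le 3C_1 r^2$, three times the claimed bound. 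Part (b) has the same defect, compounded by the fact that the pointwise sandwich $\phi_{M_1}\le g\le \phi_{-M_2}$ does not imply a pointwise bound on $\nabla g$; your extraction of the $\tfrac{1}{d+1}$ coefficient there is unjustified. (Your fallback when $B_{2r}(y)\not\subset\Omega$ is also not obviously valid, since shrinking the radius changes both the Harnack factor and the barrier in competing directions.)

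The paper obtains the sharp constants by centering the ball at the point $x_0$ of interest rather than at the zero. With $r=\sfd_{\{v=0\}}(x_0)$, applying \Cref{lem:Harnack}\ref{itm:Harnack-b} at the \emph{center} of $B_r(x_0)$---where the Poisson factor equals $1$---gives $v(x_0)\le \dashint_{\partial B_r(x_0)} v\,\rmd S + \tfrac{M_2}{2d}r^2$. The spherical average is then controlled by applying \Cref{lem:Harnack}\ref{itm:Harnack-a} on $B_{r+\epsilon}(x_0)$: since this ball contains a point $x'$ with $v(x')=0$, substituting $x=x'$ forces $\tfrac{1}{r^2}\dashint_{\partial B_r(x_0)} v\,\rmd S \le \tfrac{2^{d-1}M_1}{d}$ after $\epsilon\to 0$. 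Combining yields exactly $C_1$. Part (b) then follows by feeding this same average bound into \Cref{lem:Harnack}\ref{itm:Harnack-c}, again evaluated at the center $x_0$, which produces $C_2$ directly. The essential difference is that the paper always evaluates the Harnack inequalities at the center of the ball, where no lossy Poisson factor appears, and uses the existence of a zero inside the ball only to bound the boundary average.
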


In view of \eqref{eq:estimate-EL}, setting $\Omega=\mR^{n}$ and $(v,M_{1},M_{2})$ as in \Cref{tab:choice-parameters} within \Cref{thm:Harnack-application} immediately yields the following theorem. 

\begin{theorem}\label{thm:continuity} 
Let $V \in L^{\infty}(\mR^{d})$ and let $u_{*}$ be a local minimizer of \eqref{eq:main1} in $\mK$. 
\begin{enumerate}
\renewcommand{\labelenumi}{\theenumi}
\renewcommand{\theenumi}{\rm (\alph{enumi})}
\item \label{itm:no-FB1} If $\left\{u_{*}=1\right\}\neq\emptyset$, then $0 \le 1-u_{*}(x) \le C_{1}\sfd_{\{u_{*}=1\}}(x)^{2}$ and $\abs{\nabla u_{*}(x)} \le C_{2}\sfd_{\{u_{*}=1\}}(x)$ for all $x\in\mR^{d}$. 
\item \label{itm:no-FB2} If $\left\{u_{*}=0\right\}\neq\emptyset$, then $0 \le u_{*}(x) \le C_{1}\sfd_{\{u_{*}=0\}}(x)^{2}$ and $\abs{\nabla u_{*}(x)} \le C_{2}\sfd_{\{u_{*}=0\}}(x)$ for all $x\in\mR^{d}$. 
\end{enumerate} 
with positive constants $C_{1}$ and $C_{2}$ given in \Cref{tab:choice-parameters}. 
\end{theorem}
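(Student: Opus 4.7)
The plan is to derive \Cref{thm:continuity} as a direct application of \Cref{thm:Harnack-application} to two carefully chosen auxiliary functions. The point of departure is the Lewy-Stampacchia inequality from \Cref{thm:Lewy-Stampacchia}, which gives $u_{*}\in W_{\rm loc}^{2,p}(\mR^{d})$ and the pointwise bound \eqref{eq:estimate-EL}, namely $-\norm{V_{-}}_{L^{\infty}} \le \Delta u_{*} \le \norm{V_{+}}_{L^{\infty}}$ a.e. in $\mR^{d}$. In particular, $u_{*}$ belongs to $W_{\rm loc}^{2,1}(\mR^{d})$, so the hypotheses of \Cref{thm:Harnack-application} are accessible with $\Omega = \mR^{d}$, provided we produce a nonnegative function whose Laplacian is two-sidedly bounded and whose zero set is nonempty.

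For part \ref{itm:no-FB1}, I would set $v := 1-u_{*}$. Since $0 \le u_{*} \le 1$ by membership in $\mK$, we have $v \ge 0$, and the assumption $\{u_{*}=1\}\neq\emptyset$ translates to $\{v=0\}\cap\mR^{d}\neq\emptyset$ with $\sfd_{\{v=0\}} = \sfd_{\{u_{*}=1\}}$. A sign flip in \eqref{eq:estimate-EL} yields
\begin{equation*}
-\norm{V_{+}}_{L^{\infty}(\mR^{d})} \le \Delta v = -\Delta u_{*} \le \norm{V_{-}}_{L^{\infty}(\mR^{d})} \quad \text{a.e. in $\mR^{d}$,}
\end{equation*}
so \Cref{thm:Harnack-application} applies with $M_{1} = \norm{V_{-}}_{L^{\infty}}$ and $M_{2} = \norm{V_{+}}_{L^{\infty}}$; this is exactly the first row of \Cref{tab:choice-parameters}. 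Substituting these values into the formulas for $C_{1}$ and $C_{2}$ in \Cref{thm:Harnack-application}, and using $\max\{\norm{V_{+}}_{L^{\infty}},\norm{V_{-}}_{L^{\infty}}\} = \norm{V}_{L^{\infty}}$ (since $V_{+}$ and $V_{-}$ have disjoint supports), recovers the constants listed in the table, proving \ref{itm:no-FB1}.

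For part \ref{itm:no-FB2}, I would simply take $v := u_{*}$ itself, which is already nonnegative with $\{v=0\}\neq\emptyset$ by hypothesis. The bound \eqref{eq:estimate-EL} directly gives $M_{1} = \norm{V_{+}}_{L^{\infty}}$ and $M_{2} = \norm{V_{-}}_{L^{\infty}}$, matching the second row of \Cref{tab:choice-parameters}, and \Cref{thm:Harnack-application} delivers the stated quadratic and linear growth bounds.

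There is no substantive obstacle here: the two assertions are structurally identical once one recognizes that $u_{*}$ and $1-u_{*}$ play symmetric roles after a sign flip of the Laplacian. The only point that deserves care is the slight asymmetry in the expression for $C_{2}$ between what \Cref{thm:Harnack-application} produces ($M_{1}$ and $\max\{M_{1},M_{2}\}$) and what appears in \Cref{tab:choice-parameters} (where the maximum is rewritten as $\norm{V}_{L^{\infty}}$); verifying this identification is the only bookkeeping required.
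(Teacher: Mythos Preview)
Your proposal is correct and is precisely the argument the paper gives: the authors state that \Cref{thm:continuity} follows immediately from \Cref{thm:Harnack-application} with $\Omega=\mR^{d}$ and the choices of $(v,M_{1},M_{2})$ recorded in \Cref{tab:choice-parameters}, which are exactly the two substitutions $v=1-u_{*}$ and $v=u_{*}$ you describe. Your observation that $\max\{\norm{V_{+}}_{L^{\infty}},\norm{V_{-}}_{L^{\infty}}\}=\norm{V}_{L^{\infty}}$ is the only bookkeeping needed to match the $C_{2}$ column, and you have handled it correctly.
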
 

In \Cref{thm:Lewy-Stampacchia}, elliptic regularity implies that $u \in W^{2,p}_{\mathrm{loc}}(\mathbb{R}^d)$ for all $1 \le p < \infty$. 
By contrast, near the contact set, the sharper estimates derived in \Cref{thm:continuity}\ref{itm:no-FB1} allow us to upgrade the regularity of $u$ to the class $C^{1,1}$ across the free boundary $\partial\{u_{*}<1\}$ in the following sense: 
\begin{equation*}
\abs{\nabla u_{*}(x) - \nabla u_{*}(y)} = \abs{\nabla u_{*}(y)} \le C \sfd_{\{u_{*}=1\}}(x) \le C \abs{x-y} 
\end{equation*}
for all $x\in\{u_{*}=1\}$ and $y\in\mR^{d}$. Using \Cref{thm:continuity}\ref{itm:no-FB2}, one similarly obtains $C^{1,1}$ regularity across the free boundary $\partial\{u_{*}>0\}$. 

We also highlight a proposition that is useful for computing the associated energy. 

\begin{proposition}\label{rem:simple-obervations} 
Let $V \in L^{\infty}(\mR^{d})$ satisfy $V\ge \alpha^{2}$ in $\overline{B_{R}}^{\complement}$ for some $\alpha>0$ and $R>0$.  If $u_{*}$ is a local minimizer of \eqref{eq:main1} in $\mK$, then 
\begin{equation*}
\mF(u_{*}) = \int_{\{u_{*}=1\}} V(x)\,\rmd x - \int_{\partial\{u_{*}=1\}} \Delta u_{*}\,\rmd x. 
\end{equation*} 
In particular, if $\mL^{d}(\partial\{u_{*}=1\})=0$, then 
\begin{equation*}
\mF(u_{*}) = \int_{\{u_{*}=1\}} V(x)\,\rmd x. 
\end{equation*} 
\end{proposition}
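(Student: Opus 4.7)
The plan is to integrate by parts globally on $\mR^{d}$ and then use the Euler-Lagrange equation \eqref{eq:main-Euler-Lagrange} to localize the result to the contact set $\{u_{*}=1\}$. The regularity $u_{*}\in W^{2,p}_{\mathrm{loc}}(\mR^{d})$ from \Cref{thm:Lewy-Stampacchia} makes the classical divergence theorem applicable on every ball $B_{r}$, giving
\begin{equation*}
\int_{B_{r}} \abs{\nabla u_{*}}^{2}\,\rmd x \,=\, \int_{\partial B_{r}} u_{*}(\nabla u_{*}\cdot \nu)\,\rmd S \,-\, \int_{B_{r}} u_{*}\Delta u_{*}\,\rmd x.
\end{equation*}
Since $u_{*}\in H^{1}(\mR^{d})$, Fubini yields $\int_{0}^{\infty}\int_{\partial B_{r}}(u_{*}^{2}+\abs{\nabla u_{*}}^{2})\,\rmd S\,\rmd r<\infty$, so along some sequence $r_{n}\to\infty$ the spherical integrand vanishes, and Cauchy--Schwarz then forces the surface term above to tend to $0$ along $r_{n}$.

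Next, combining \eqref{eq:main-Euler-Lagrange} with the identity $u_{*}=u_{*}^{2}=1$ on $\{u_{*}=1\}$, I record the pointwise equality
\begin{equation*}
u_{*}\Delta u_{*} - Vu_{*}^{2} \,=\, (\Delta u_{*}-V)\chi_{\{u_{*}=1\}} \quad \text{a.e.\ in $\mR^{d}$}.
\end{equation*}
The bound $\mL^{d}(\{u_{*}=1\})\le \norm{u_{*}}_{L^{2}(\mR^{d})}^{2}<\infty$, together with $\abs{\Delta u_{*}}\le \norm{V}_{L^{\infty}(\mR^{d})}$ from \eqref{eq:estimate-EL}, makes the right-hand side integrable, so sending $r_{n}\to\infty$ in the ball identity and substituting gives
\begin{equation*}
\mF(u_{*}) \,=\, \int_{\mR^{d}}(Vu_{*}^{2}-u_{*}\Delta u_{*})\,\rmd x \,=\, \int_{\{u_{*}=1\}} V\,\rmd x \,-\, \int_{\{u_{*}=1\}} \Delta u_{*}\,\rmd x.
\end{equation*}
Finally, since $u_{*}\equiv 1$ on the open set $\mathrm{int}\{u_{*}=1\}$ and $u_{*}\in W^{2,p}_{\mathrm{loc}}$, $\Delta u_{*}=0$ a.e.\ there, so the last integral reduces to one over $\partial\{u_{*}=1\}$; this yields the main identity, and the ``in particular'' clause is then immediate.

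The only technical obstacle is the vanishing of the surface integral at infinity. The hypothesis $V\ge \alpha^{2}$ on $\overline{B_{R}}^{\complement}$ could in fact be used to prove exponential decay of $u_{*}$, by comparing the subsolution inequality $\Delta u_{*}-\alpha^{2}u_{*}\ge 0$ on $\overline{B_{R}}^{\complement}$ against a supersolution of the form $Ce^{-\beta(\abs{x}-R)}$ with $\beta<\alpha$; this would render the limit $r\to\infty$ unconditional. However, the softer subsequence trick driven by $u_{*}\in H^{1}$ alone already suffices, so no appeal to the potential well is needed in the present argument.
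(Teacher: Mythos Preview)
Your proof is correct, and in fact it is more elementary than the route the paper takes. The paper does not integrate by parts globally; instead it splits $\mR^{d}$ at $\partial B_{R}$ and handles the exterior by observing (via Lax--Milgram and the coercivity coming from $V\ge\alpha^{2}$) that $u_{*}$ is the unique minimizer of $\frac{1}{2}\int_{\overline{B_{R}}^{\complement}}(\abs{\nabla v}^{2}+Vv^{2})\,\rmd x$ among $v\in H^{1}(\overline{B_{R}}^{\complement})$ with $v|_{\partial B_{R}}=u_{*}$. Testing the resulting weak formulation against $u_{*}(1-\phi)$ with a cutoff $\phi$ gives $\int_{\overline{B_{R}}^{\complement}}(\abs{\nabla u_{*}}^{2}+Vu_{*}^{2})\,\rmd x = -\int_{\partial B_{R}}u_{*}\partial_{r}u_{*}\,\rmd S$, so that the two boundary terms at $\partial B_{R}$ cancel exactly when this is combined with the integration by parts on $B_{R}$. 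The paper then uses $\{u_{*}=1\}\subset B_{R}$ to localize. Your approach replaces all of this exterior machinery with a single soft subsequence trick, and as you correctly point out, your argument never touches the hypothesis $V\ge\alpha^{2}$ on $\overline{B_{R}}^{\complement}$: it goes through for any $V\in L^{\infty}(\mR^{d})$ and any local minimizer $u_{*}\in\mK$. What the paper's route buys is the exterior weak formulation itself (which is reused elsewhere, e.g., in the proof of \Cref{lem:compactness-lemma}); what your route buys is a strictly weaker hypothesis and a shorter, self-contained argument for this particular identity.
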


\subsection{Geometry of the free boundary \texorpdfstring{$\partial\{u_{*} < 1\}$}{}}

A natural question that arises is whether free boundary methods can be employed to analyze the $C^{1}$ regularity of the free boundary $\partial\{u_{*} < 1\}$. However, several free boundary techniques developed in \cite{ACS01FreeBoundaryCalderon,Caffarelli1987FreeBoundaryI,DSFS14TwoPhaseProblemLinear,KSS23Minimization,KSS24Anisotropic} are not applicable due to the degeneracy condition \Cref{thm:continuity}\ref{itm:no-FB2}. The necessity of the non-degeneracy condition in such methods is illustrated by \cite[Examples~1.5, 1.6, and 1.7]{KSS24Anisotropic}. Recently, Mikko Salo and Henrik Shahgholian developed new free boundary methods in \cite{SS25vanishingcontrast} to handle such cases; these methods have also been applied to the study of the scattering problem \cite{KSS26AnisotropicII}. By using their methods, we are able to prove the following result: 

\begin{theorem}\label{thm:FB}
Suppose that all assumptions in \Cref{thm:continuity} hold. If $D:=\{u_{*}<1\}$ is a Lipschitz domain, $V$ is $C^{\alpha}$ for some $0<\alpha<1$ near $x_{0} \in \partial D$,
then $u_{*}$ reaches the optimal regularity $C^{1,1}$ near $x_{0}$ and the following holds true: 
\begin{enumerate}
\renewcommand{\labelenumi}{\theenumi}
\renewcommand{\theenumi}{\rm (\alph{enumi})}
\item If $d=2$ and $\partial D$ is piecewise $C^{1}$, then $\partial D$ is $C^{1}$ near $x_{0}$. 
\item \label{itm:convex} If $d=2$ and $D$ is convex\footnote{We recall that bounded convex domains always have Lipschitz boundary \cite[Corollary~1.2.2.3]{Grisvard2011PDE}.}, then $\partial D$ is $C^{1}$ near $x_{0}$. 
\item If $d\ge 3$, then $x_{0}$ is not an edge point described in \cite[Definition~1.3]{SS25vanishingcontrast}. 
\end{enumerate}
\end{theorem}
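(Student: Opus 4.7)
The plan is to recast the local minimizer $u_{*}$ as a solution of a vanishing-contrast free boundary problem and then apply the recent methods of Salo--Shahgholian \cite{SS25vanishingcontrast}. Combining \Cref{thm:Lewy-Stampacchia} with \Cref{thm:continuity}\ref{itm:no-FB1}, we have $\Delta u_{*} = V u_{*}$ in $D = \{u_{*} < 1\}$, $u_{*} \equiv 1$ on $D^{\complement}$, and $\abs{\nabla u_{*}(x)} \le C_{2}\sfd_{\{u_{*}=1\}}(x)$ on all of $\mR^{d}$. Hence both $u_{*}$ and $\nabla u_{*}$ extend continuously across $\partial D$ with matching Cauchy data $(1,0)$ from either side. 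This is precisely the vanishing-contrast configuration for which the techniques of \cite{SS25vanishingcontrast} were developed, and it explains why classical Caffarelli-type obstacle methods fail here.

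Next, I would upgrade the regularity of $u_{*}$ to the optimal class $C^{1,1}$ near $x_{0}$. The Lewy--Stampacchia bound $\abs{\Delta u_{*}} \le \norm{V}_{L^{\infty}(\mR^{d})}$ already gives $u_{*} \in W^{2,p}_{\rm loc}(\mR^{d})$ for every $1 \le p < \infty$. Using $V \in C^{\alpha}$ near $x_{0}$ together with the quadratic bound $1 - u_{*}(x) \le C_{1}\sfd_{\{u_{*}=1\}}(x)^{2}$ from \Cref{thm:continuity}\ref{itm:no-FB1} and the $C^{1,1}$-across-the-free-boundary observation recorded just after that theorem, one deduces $u_{*} \in C^{1,1}$ in a neighborhood of $x_{0}$, which is the optimal regularity compatible with the constraint $u_{*} \le 1$. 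Alternatively, this $C^{1,1}$ bound can be extracted directly from the regularity theorem of \cite{SS25vanishingcontrast} under the present hypotheses.

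With $u_{*} \in C^{1,1}$, vanishing Cauchy data on $\partial D$, and a Lipschitz free boundary near $x_{0}$, the problem now fits the hypotheses of the main free boundary results of \cite{SS25vanishingcontrast}, and items (a)--(c) follow by translating their conclusions to our setting. For (a), their planar corner classification shows that under the vanishing-contrast condition every blow-up at a piecewise $C^{1}$ boundary point must be a half-plane profile, so $\partial D$ cannot carry a genuine corner and is in fact $C^{1}$ near $x_{0}$. Item (b) reduces to (a), since a bounded planar convex domain has boundary that is piecewise $C^{1}$ outside a countable exceptional set, and the same corner rigidity applies at each such point. Item (c) is the higher-dimensional edge-point exclusion of \cite{SS25vanishingcontrast}, applied verbatim to $u_{*}$ at $x_{0}$.

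The main obstacle I anticipate is verifying that the inhomogeneity $Vu_{*}$, whose boundary trace $V(x_{0})$ may have arbitrary sign, genuinely fits the precise framework of \cite{SS25vanishingcontrast}. To handle this I would freeze $V$ at $x_{0}$, absorb the difference $V(x) - V(x_{0})$ as a $C^{\alpha}$ perturbation, and perform the blow-up analyses with constant coefficients. Any residual sign dependence can then be separated into the favorable case $V(x_{0}) \le 0$ (where classical obstacle-type reasoning is also available) and the unfavorable case $V(x_{0}) > 0$ (where the vanishing-contrast machinery of \cite{SS25vanishingcontrast} is truly indispensable).
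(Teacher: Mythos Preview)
Your approach is essentially the paper's: the authors simply observe that, by \eqref{eq:main-Euler-Lagrange}, the shifted function $u_{*}-1$ satisfies $\Delta(u_{*}-1)=(Vu_{*})\chi_{D}$ with $u_{*}-1=0$ on $B_{1}\setminus\overline{D}$, and then invoke \Cref{prop:FB} (which packages \cite[Theorems~1.9 and 1.10]{SS25vanishingcontrast}) with $f=Vu_{*}\in C^{\alpha}$. Your last paragraph is unnecessary worry: \Cref{prop:FB} imposes no sign condition on $f$, so no freezing or case splitting on the sign of $V(x_{0})$ is needed.
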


In view of \eqref{eq:main-Euler-Lagrange}, \Cref{thm:FB} follows directly from \Cref{prop:FB} by taking $D=\{u_{*}<1\}$ and $f=Vu_{*}$.

\subsection{Existence of global minimizers}  

We assume that $V$ satisfies the following assumption: 

\begin{assumption}\label{assu:1} 
Suppose that $V\in L^{\infty}(\mR^{d})$ satisfies 
\begin{equation}
V(x) \ge \alpha^{2} \text{ for a.e. $x\in B_{R}^{\complement}$} ,\quad V(x) \le -\beta^{2} \text{ for a.e. $x\in G$} 
\end{equation}
for some $\alpha>0$, $\beta>0$, $R>0$ and a bounded 
domain $\emptyset\neq G\subset B_{R}$ (``potential well''). In addition, we assume that 
\begin{equation}
\beta^{2} > \lambda^{*}(G) := \inf_{\phi\in  C_{c}^{\infty}(G):\phi\not\equiv0} \frac{\norm{\nabla\phi}_{L^{2}(G)}^{2}}{\norm{\phi}_{L^{2}(G)}^{2}}.  \label{eq:trapped-condition} 
\end{equation}
\end{assumption} 

\Cref{assu:1} may be interpreted, for instance, as migration into a green oasis within an otherwise barren desert. 
The quantity $\lambda^{*}(G)$ is referred to as the \emph{fundamental tone} of $G$, see also \cite[Proposition~2.6]{GS24PartialBalayageHelmholtz} for equivalent characterizations\footnote{In \cite[Proposition~2.8]{GS24PartialBalayageHelmholtz}, necessary and sufficient conditions for the validity of the maximum principle for the Helmholtz operator are established.}. 
It is well known that $\lambda^{*}(G) > 0$ and coincides with the first Dirichlet eigenvalue of $-\Delta$ on $G$. 
Moreover, $\lambda^{*}(G_1) \ge \lambda^{*}(G_2)$ whenever $G_1 \subset G_2$, and $\lambda^{*}(B_r) = r^{-2}\lambda^{*}(B_1)$ for all $r > 0$.
Our fourth main result establishes the existence of a global minimizer, stated below (see \Cref{sec:numerical} for some numerical examples): 

\begin{theorem}\label{thm:existence-minimizer} 
We assume that $V$ satisfies \Cref{assu:1}. Then the following holds: 
\begin{enumerate}
\renewcommand{\labelenumi}{\theenumi}
\renewcommand{\theenumi}{\rm (\alph{enumi})} 
\item \label{itm:nontrivial-minimizer} There exists a global minimizer $u_{*}$ of \eqref{eq:main1} in $\mK$ with $m_{*}:=\min_{u\in\mK}\mF(u) = \mF(u_{*}) < 0$, therefore all minimizers are nontrivial, and $\{u_*=1\}$ has positive measure. 
\item \label{item:smallest-minimizer} There exist the smallest and largest global minimizers, denoted by $\underline{u}_{*}$ and $\overline{u}_{*}$ respectively, of \eqref{eq:main1} in $\mK$, such that for every global minimizer $u_{*}$ of \eqref{eq:main1} in $\mK$, 
\begin{equation*}
\underline{u}_{*}(x)\le u_{*}(x) \le \overline{u}_{*}(x) \quad \text{for all $x\in\mR^{d}$.}
\end{equation*}
\item \label{itm:nonempty-contact-set} Let $\alpha_{0}:= \norm{V_{+}}_{L^{\infty}(\mR^{d})}^{1/2}$. There exist positive constants $c=c(d,V)$, $C=C(R,d,\alpha)$ and $C'=C'(R,d,\alpha,\alpha_{0})$ such that, for every global minimizer $u_{*}$ of \eqref{eq:main1} in $\mK$, one has 
\begin{equation}
\left\{\begin{aligned}
& u_{*}(x) \ge c\abs{x}^{-\frac{d-1}{2}} e^{-\alpha_{0}\abs{x}} &&\text{for all $x\in B_{2R}^{\complement}$,} \\ 
& u_{*}(x) \le C\abs{x}^{-\frac{d-1}{2}} e^{-\alpha\abs{x}} && \text{for all $x\in \mR^{d}\setminus\{0\}$,}
\end{aligned}\right.  \label{eq:decay}
\end{equation}
and 
\begin{equation}
\abs{\nabla u_{*}(x)} \le  C'\min\left\{1,\abs{x}^{-\frac{d-1}{2}}e^{-\alpha \abs{x}}\right\} \quad \text{for all $x\in\mR^{d}$.} \label{eq:decay-gradient}
\end{equation}
In addition, each global minimizer $u_{*}$ of \eqref{eq:main1} in $\mK$ satisfies $\{u_{*}=1\}\neq\emptyset$. 
\end{enumerate}
\end{theorem}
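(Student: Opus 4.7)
The plan is to prove the three claims in sequence, with the main difficulty concentrated in \ref{itm:nonempty-contact-set}.

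For \ref{itm:nontrivial-minimizer}, the approach is the direct method. \Cref{assu:1} gives coercivity via
\begin{equation*}
\mF(u) \ge \norm{\nabla u}_{L^{2}(\mR^{d})}^{2} + \alpha^{2}\norm{u}_{L^{2}(B_{R}^{\complement})}^{2} - \norm{V_{-}}_{L^{\infty}(\mR^{d})}\,\mL^{d}(B_{R}),
\end{equation*}
which, together with $\norm{u}_{L^{2}(B_{R})}^{2}\le \mL^{d}(B_{R})$, makes any minimizing sequence bounded in $H^{1}(\mR^{d})$. Extracting $u_{n}\rightharpoonup u_{*}\in\mK$ weakly in $H^{1}$ and pointwise a.e., weak lower semicontinuity of the Dirichlet integral combined with Rellich compactness on $B_{R}$ (for the $V_{-}$-term) and Fatou's lemma on $B_{R}^{\complement}$ (for the nonnegative $V_{+}$-term) yields $\mF(u_{*})\le m_{*}$. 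For $m_{*}<0$, test on the first Dirichlet eigenfunction $\phi_{0}\in H_{0}^{1}(G)$ of $-\Delta$ on $G$, normalized by $\norm{\phi_{0}}_{L^{\infty}}=1$ and extended by zero to $\mR^{d}$:
\begin{equation*}
\mF(\phi_{0}) \le (\lambda^{*}(G)-\beta^{2})\norm{\phi_{0}}_{L^{2}(G)}^{2} < 0
\end{equation*}
by \eqref{eq:trapped-condition}. If $\mL^{d}(\{u_{*}=1\})=0$, then \eqref{eq:PDE-minimizer} collapses to $-\Delta u_{*}+Vu_{*}=0$ a.e.\ on $\mR^{d}$, so pairing against $u_{*}\in H^{1}(\mR^{d})$ gives $\mF(u_{*})=0$, contradicting $m_{*}<0$.

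For \ref{item:smallest-minimizer}, the key structural observation is that $M:=\{u\in\mK:\mF(u)=m_{*}\}$ is closed under pointwise lattice operations: the a.e.\ identities $(u\wedge v)^{2}+(u\vee v)^{2}=u^{2}+v^{2}$ and $\abs{\nabla(u\wedge v)}^{2}+\abs{\nabla(u\vee v)}^{2}=\abs{\nabla u}^{2}+\abs{\nabla v}^{2}$ give $\mF(u\wedge v)+\mF(u\vee v)=2m_{*}$, so both lie in $M$. Fix a strictly positive $\rho\in L^{1}(\mR^{d})$, set $\Phi(u):=\int_{\mR^{d}} u\rho\,\rmd x$, pick $u_{n}\in M$ with $\Phi(u_{n})\searrow\inf_{M}\Phi$, and replace $u_{n}$ by the pointwise-decreasing sequence $v_{n}:=u_{1}\wedge\cdots\wedge u_{n}\in M$. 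Since $\mF(v_{n})=m_{*}$ controls $\norm{\nabla v_{n}}_{L^{2}}$, a weak-$H^{1}$ extraction plus lower semicontinuity places the pointwise limit $\underline{u}_{*}$ in $M$, and dominated convergence gives $\Phi(\underline{u}_{*})=\inf_{M}\Phi$. For any $u\in M$, $u\wedge\underline{u}_{*}\in M$ and $\Phi(u\wedge\underline{u}_{*})\le\Phi(\underline{u}_{*})$ force $u\wedge\underline{u}_{*}=\underline{u}_{*}$, i.e., $\underline{u}_{*}\le u$. The dual construction (using $\sup_{M}\Phi$ and $\vee$) produces $\overline{u}_{*}$.

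For \ref{itm:nonempty-contact-set}, the decay estimates come from comparison with the radial decaying solution $\Psi_{\lambda}(r):=r^{-(d-2)/2}K_{(d-2)/2}(\lambda r)$ of $(-\Delta+\lambda^{2})w=0$, whose asymptotic $K_{\nu}(z)\sim\sqrt{\pi/(2z)}\,e^{-z}$ as $z\to\infty$ supplies the factor $r^{-(d-1)/2}e^{-\lambda r}$. For the upper bound: on $B_{R}^{\complement}$, $V\ge\alpha^{2}$ and \eqref{eq:PDE-minimizer} give $(-\Delta+\alpha^{2})u_{*}\le 0$; comparing $u_{*}$ with $\Psi_{\alpha}(\abs{x})/\Psi_{\alpha}(R)$ on $B_{R}^{\complement}$ (equal to $1\ge u_{*}$ on $\partial B_{R}$ and vanishing at infinity) via the Helmholtz maximum principle yields the bound on $B_{R}^{\complement}$, while the trivial estimate $u_{*}\le 1$ absorbs the blow-up of $\Psi_{\alpha}$ near the origin into $C$. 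Nonemptiness of $\{u_{*}=1\}$ for every global minimizer then follows by rescaling: $\norm{u_{*}}_{L^{\infty}}<1$ would place $u_{*}/\norm{u_{*}}_{L^{\infty}}\in\mK$ with strictly smaller energy, so $\norm{u_{*}}_{L^{\infty}}=1$, which by continuity of $u_{*}$ and the just-obtained decay is attained on a compact subset of $\overline{B_{R}}$ (using \Cref{rem:overpopulated} to locate the contact set). For the lower bound on $B_{2R}^{\complement}$: combining both sides of \eqref{eq:PDE-minimizer} with $V\le\alpha_{0}^{2}$ gives $\Delta u_{*}\le\alpha_{0}^{2}u_{*}$ a.e.\ on $\mR^{d}$ (on $\{u_{*}<1\}$: $\Delta u_{*}=Vu_{*}\le\alpha_{0}^{2}u_{*}$; on $\{u_{*}=1\}$: $\Delta u_{*}\le Vu_{*}+V_{-}=V_{+}\le\alpha_{0}^{2}$), so $u_{*}$ is a nonnegative supersolution of $-\Delta+\alpha_{0}^{2}$ globally. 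A Harnack chain from a contact point in $\{u_{*}=1\}\cap\overline{B_{R}}$ to $\partial B_{2R}$ produces $\inf_{\partial B_{2R}}u_{*}\ge c_{0}>0$, and comparison with $c_{0}\Psi_{\alpha_{0}}(\abs{x})/\Psi_{\alpha_{0}}(2R)$ on $B_{2R}^{\complement}$ closes the lower bound.

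For the gradient bound \eqref{eq:decay-gradient}, the $C^{1,1}$ estimate of \Cref{thm:continuity} supplies the uniform bound $\abs{\nabla u_{*}}\le C'$, and interior $W^{2,p}$ estimates applied to $-\Delta u_{*}+Vu_{*}=0$ on unit balls contained in $\{u_{*}<1\}$ for large $\abs{x}$ transfer the decay in \eqref{eq:decay} onto $\abs{\nabla u_{*}}$. The hardest step is the quantitative Harnack-chain estimate: propagating $u_{*}=1$ from a (possibly unknown) contact point in $\overline{B_{R}}$ outward to $\partial B_{2R}$, across a region where $V$ may change sign, with constants depending only on $d$, $V$, and $R$. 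This should be achievable either via the classical Harnack inequality for $-\Delta u+Vu=0$ applied on a suitable chain of balls inside $\{u_{*}<1\}$, or via the weak Harnack inequality applied directly to the global supersolution property of $u_{*}$.
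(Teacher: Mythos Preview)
Your overall architecture is sound and parts \ref{itm:nontrivial-minimizer} and \ref{item:smallest-minimizer} are correct, though organized differently from the paper. For \ref{itm:nontrivial-minimizer} the paper does not use Fatou on $B_R^\complement$; instead it builds a Bessel-function barrier $u_0(x)=g_\alpha(|x|)/g_\alpha(R)$ and shows (\Cref{lem:compactness-lemma}) that any $u\in\mK$ can be improved to one in $\mK_0=\{u\in\mK:u\le u_0\}$ with no larger energy, after which the sublevel sets $\mK_0(a)$ are genuinely compact in $L^2(\mR^d)$. This is not just a matter of taste: the barrier construction is what simultaneously delivers the upper decay estimate in \eqref{eq:decay} for \emph{every} global minimizer, whereas your Fatou argument yields existence but then requires a separate comparison argument for the decay. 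Your lattice argument for \ref{item:smallest-minimizer} via the weighted functional $\Phi$ is a correct variant of the paper's separability-plus-monotone-limit construction.

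The substantive gap is in the lower bound of \eqref{eq:decay}, precisely the step you flag as hardest. The paper avoids the Harnack chain entirely by exploiting part \ref{item:smallest-minimizer}: since every global minimizer satisfies $u_*\ge\underline{u}_*$, it suffices to show $\min_{\partial B_{2R}}\underline{u}_*>0$ for the single fixed function $\underline{u}_*$, which follows from the equation $(\Delta-V)\underline{u}_*=0$ on $B_R^\complement$ (here $V_-=0$, so the Lewy--Stampacchia inequality collapses to an equation) together with the strong maximum principle and \Cref{rem:Landis}. This gives the uniform constant $c=c(d,V)$ for free. Your chain argument, by contrast, must start at a contact point in $\{u_*=1\}$ where the equation $(\Delta-V)u_*=0$ fails, so the classical Harnack for solutions does not directly apply; and a weak-Harnack chain for supersolutions of $-\Delta+\alpha_0^2$ does not propagate pointwise lower bounds ball to ball. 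A workable fix is weak Harnack on $B_{4R}$ combined with a uniform lower bound on $\|u_*\|_{L^p(B_{4R})}$ (obtainable from the uniform $W^{2,p}$ regularity of \Cref{thm:Lewy-Stampacchia} plus $\{u_*=1\}\neq\emptyset$), but this is considerably more work than invoking $\underline{u}_*$. Note also that \Cref{rem:overpopulated} only locates $\operatorname{int}\{u_*=1\}$ inside $\{V\le 0\}$; to get $\{u_*=1\}\subset\overline{B_R}$ you need the subharmonicity of $u_*$ on $B_R^\complement$ and the strong maximum principle.

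One minor correction: \Cref{thm:continuity} gives $|\nabla u_*(x)|\le C_2\,\sfd_{\{u_*=1\}}(x)$, which grows linearly at infinity and is not a uniform bound. The paper obtains $|\nabla u_*|\le C'$ from the Harnack-type gradient estimate \Cref{lem:Harnack}\ref{itm:Harnack-c} applied on unit balls, using only $|\Delta u_*|\le\|V\|_{L^\infty}$; the decaying gradient bound then comes from the same lemma with $M_3$ replaced by the decaying bound on $|\Delta u_*|=|Vu_*|$.
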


\begin{remark*}
The estimate \eqref{eq:decay} remains valid for local minimizers $u_{*}$, but the constant $c$ depends on $u_{*}$. 
\end{remark*}

\Cref{assu:1} ensures that all global minimizers of \eqref{eq:main1} are nontrivial, indicating that the favorable environment $G$ attracts the species from afar.
Since the embedding $H^{1}(\mR^{d})\subset L^{2}(\mR^{d})$ is not compact, several standard compactness theorems are not applicable. As a result, establishing the existence of a global minimizer of \eqref{eq:main1} (\Cref{thm:existence-minimizer}\ref{itm:nontrivial-minimizer}) is not straightforward. 
We note that the smallest global minimizer of \eqref{eq:main1} in $\mK$ is necessarily a stable solution of \eqref{eq:main1} in the sense of \cite[Definition~1.1.3]{Dupaigne2011PDE}. 
As a direct consequence of \eqref{eq:decay}, we know that the set $\{u_{*}=1\}\neq\emptyset$ is bounded. This immediately establishes the existence of a solution to the following inverse shape determination problem: finding a bounded domain $D \subset \mR^{d}$ and $u \in H^{1}(D^{\complement})$ such that 
\begin{equation*}
\left\{\begin{aligned}
& \Delta u - V(x)u = 0 && \text{in $D^{\complement}$,} \\ 
& u = 1 ,\quad \abs{\nabla u}=0 && \text{on $\partial D$.} 
\end{aligned}\right. 
\end{equation*}

\subsection{Positivity of global minimizers}

The following theorem establishes the positivity of the global minimizers described in \Cref{thm:existence-minimizer} by Hopf's maximum principle (\Cref{cor:Hopf}), under additional assumptions on $V$: 

\begin{theorem}\label{thm:positivity}
We assume that $V$ satisfies \Cref{assu:1}, and let $u_{*}$ be any global minimizer of \eqref{eq:main1} in $\mK$. If there exists a bounded domain $G_{1}\supset G$ satisfying the condition of the interior sphere such that 
\begin{equation*}
V \le 0 \text{ a.e. in $G_{1}$} \quad \text{and} \quad V\ge 0 \text{ a.e. in $G_{1}^{\complement}$,}
\end{equation*} 
then $u_{*}(x)>0$ for all $x\in\mR^{d}$. 
\end{theorem}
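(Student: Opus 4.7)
The plan is to combine the exponential lower bound at infinity from \Cref{thm:existence-minimizer}\ref{itm:nonempty-contact-set} with a strong minimum principle argument on the open set $\Omega:=\{u_{*}<1\}$. By that lower bound, $u_{*}(x)>0$ for every $x\in B_{2R}^{\complement}$, so any zero of $u_{*}$ must lie in $\overline{B_{2R}}$. Since $u_{*}=1>0$ trivially on $\{u_{*}=1\}$, it suffices to prove $u_{*}>0$ on $\Omega$. On $\Omega$, the Euler-Lagrange equation \eqref{eq:main-Euler-Lagrange} gives
\begin{equation*}
-\Delta u_{*} + V u_{*} = 0 \quad \text{a.e.\ in } \Omega,
\end{equation*}
which is a linear elliptic equation with bounded but sign-indefinite zero-order coefficient.

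Next, I would argue by contradiction. Suppose $u_{*}(x_{0})=0$ for some $x_{0}\in\Omega$, and let $\Omega_{0}$ denote the connected component of $\Omega$ containing $x_{0}$. To apply \Cref{cor:Hopf} despite the sign-indefiniteness of $V$, I would use the standard shift: setting $M:=\norm{V_{-}}_{L^{\infty}(\mR^{d})}$ and $\widetilde{V}:=V+M\ge 0$, one computes
\begin{equation*}
-\Delta u_{*} + \widetilde{V} u_{*} = M u_{*} \ge 0 \quad \text{a.e.\ in } \Omega_{0}.
\end{equation*}
Hence $u_{*}$ is a nonnegative $W_{\mathrm{loc}}^{2,p}$ supersolution of $-\Delta v + \widetilde{V} v = 0$ with nonnegative zero-order coefficient $\widetilde{V}$, attaining the nonpositive minimum value $0$ at the interior point $x_{0}$. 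The strong form of Hopf's maximum principle then forces $u_{*}\equiv 0$ throughout $\Omega_{0}$.

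To reach a contradiction, I would consider the two possibilities for $\Omega_{0}$. If $\Omega_{0}$ is bounded, then $\partial\Omega_{0}$ is nonempty (since $\Omega_{0}$ is a bounded nonempty open set), and a short argument using the fact that $\Omega_{0}$ is a connected component of the open set $\Omega$ shows $\partial\Omega_{0}\subset\{u_{*}=1\}$; continuity of $u_{*}$ together with $u_{*}\equiv 0$ on $\Omega_{0}$ then yields $0=1$ at any boundary point, a contradiction. If $\Omega_{0}$ is unbounded, it must meet $B_{2R}^{\complement}$, where the lower bound forces $u_{*}>0$, again contradicting $u_{*}\equiv 0$ on $\Omega_{0}$. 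Either way a contradiction arises, so $u_{*}>0$ on all of $\mR^{d}$.

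The main obstacle I anticipate is handling the sign-indefiniteness of $V$ in the minimum principle, which is why the shift $\widetilde{V}=V+\norm{V_{-}}_{L^{\infty}(\mR^{d})}$ is needed. The hypotheses on $G_{1}$ (interior sphere condition, $V\le 0$ in $G_{1}$, $V\ge 0$ in $G_{1}^{\complement}$) provide the natural geometric setting in which \Cref{cor:Hopf} applies cleanly: inside $G_{1}$ the sign of $V$ makes $u_{*}$ superharmonic on $\{u_{*}<1\}$ via $\Delta u_{*}=Vu_{*}\le 0$; outside $G_{1}$ the equation is already in classical Hopf form with nonnegative zero-order coefficient; and the interior sphere condition on $\partial G_{1}$ supplies the boundary regularity required for Hopf's boundary point lemma.
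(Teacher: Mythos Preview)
Your argument is correct, but it follows a genuinely different route from the paper's proof, and in fact proves more than the theorem claims.

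The paper splits the analysis along the geometry of $G_{1}$: on $G_{1}$ the sign hypothesis $V\le 0$ together with \eqref{eq:PDE-minimizer} gives $\Delta u_{*}\le V_{+}u_{*}=0$, so $u_{*}$ is superharmonic and the strong minimum principle yields $u_{*}>0$ in $G_{1}$; on $\partial G_{1}$ the interior sphere condition is used to invoke \Cref{cor:Hopf} directly (a zero at a boundary point would force a strictly negative normal derivative, contradicting $u_{*}\ge 0$); finally on $\overline{G_{1}}^{\complement}$, where $V\ge 0$, \Cref{lem:comparison-principle} applies without any shift. Each of the three extra hypotheses on $G_{1}$ is used in exactly one step.

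Your approach bypasses this decomposition entirely. The shift $\widetilde{V}=V+\norm{V_{-}}_{L^{\infty}}$ lets you run the strong minimum principle on any connected component $\Omega_{0}$ of $\{u_{*}<1\}$ regardless of the sign of $V$, and the dichotomy (bounded $\Omega_{0}$ forces $\partial\Omega_{0}\subset\{u_{*}=1\}$; unbounded $\Omega_{0}$ meets $B_{2R}^{\complement}$ where \eqref{eq:decay} gives positivity) closes the argument using only \Cref{assu:1}. Your final paragraph correctly identifies how the $G_{1}$ hypotheses feed into a Hopf-type argument, but notice that your own proof never invokes them: you have actually shown that every global minimizer is strictly positive under \Cref{assu:1} alone, without the additional structure on $G_{1}$. (An even shorter endgame, once $u_{*}\equiv 0$ on the open set $\Omega_{0}$, is to contradict \Cref{rem:Landis} directly.) The paper's proof is more elementary in that it avoids appealing to the decay estimate \eqref{eq:decay}, at the cost of requiring the stronger geometric hypotheses on $G_{1}$.
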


\subsection{Uniqueness of radially symmetric minimizer} 

We present several conclusions for the case where the potential $V$ is radially symmetric.

\begin{theorem}\label{thm:radially-symmetric} 
We assume that $V$ satisfies \Cref{assu:1} and is radially symmetric. Then both $\underline{u}_{*}$ and $\overline{u}_{*}$ are also radially symmetric. If, in addition, there exists $R_{0}\in(0,R]$ such that $V<0$ a.e. in $B_{R_{0}}$ and $V\ge 0$ a.e. in $B_{R_{0}}^{\complement}$, 
then the global minimizer $u_{*}$ of \eqref{eq:main1} in $\mK$ is unique and radially symmetric. Moreover, there exists $R_*\in (0,R_{0})$ such that $\{u_*=1\}=\overline{B_{R_*}}$ and $\partial_{r}u_{*}<0$ in $\overline{B_{R_{*}}}^{\complement}$, where $r=\abs{x}$. 
\end{theorem}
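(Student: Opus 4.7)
The argument splits naturally along the two sentences of the theorem. For the radial symmetry of $\underline{u}_*$ and $\overline{u}_*$ I would use only rotational invariance: since $V$ is radial, the map $u \mapsto u \circ O$ preserves both $\mK$ and $\mF$ for every $O \in O(d)$, and therefore permutes the set of global minimizers. The defining extremality of $\underline{u}_*$ as the pointwise-smallest minimizer forces $\underline{u}_* \le \underline{u}_* \circ O$; applying this with $O^{-1}$ in place of $O$ and composing with $O$ yields the reverse bound, so $\underline{u}_* = \underline{u}_* \circ O$ for every $O \in O(d)$. The argument for $\overline{u}_*$ is identical.

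From now on assume the additional sign dichotomy on $V$. A first preparation is to localize the coincidence set: on the connected open set $B_{R_0}^{\complement}$ (using $d \ge 2$), the hypothesis $V \ge 0$ together with \eqref{eq:PDE-minimizer} gives $\Delta u_* \ge V u_* \ge 0$, so any minimizer $u_*$ is subharmonic there; combined with $u_*(x) \to 0$ as $\abs{x} \to \infty$ from \eqref{eq:decay}, the strong maximum principle rules out $u_*(x) = 1$ for any $x \in B_{R_0}^{\complement}$. Hence $\{u_*=1\} \subset \overline{B_{R_0}}$ for every minimizer.

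The heart of the plan is to analyze any radial minimizer $u_*(r)$ via the ODE form of \eqref{eq:main-Euler-Lagrange}, namely $(r^{d-1} u_*')' = r^{d-1} V(r) u_*(r)$ on $\{u_*<1\}$, supplemented by the boundary condition $u_*'=0$ on $\partial\{u_*<1\}$ from \Cref{thm:continuity}\ref{itm:no-FB1}. I would rule out any ``gap'' interval $(r_a,r_b) \subset [0,R_0)$ on which $u_*<1$ and $u_*(r_a)=u_*(r_b)=1$ (the case $r_a=0$ being subsumed via $r^{d-1} u_*'|_{r=0}=0$, from the smoothness of a radial function at the origin): on such a gap $V<0$ a.e.\ and $u_*$ is not identically zero (its zero set has empty interior by \Cref{rem:Landis}), so $(r^{d-1} u_*')'<0$ on a set of positive measure, forcing the absolutely continuous function $r^{d-1}u_*'$ to strictly decrease between two zero values, a contradiction. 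This forces $\{u_*=1\}=\overline{B_{R_*}}$ for a single $R_*>0$. If one had $R_*\ge R_0$, then on $(R_*,\infty)$ the inequality $V\ge 0$ would give $(r^{d-1} u_*')'\ge 0$, so $u_*'\ge 0$ starting from $0$ at $r=R_*$, whence $u_*\equiv 1$ on $[R_*,\infty)$, contradicting decay; thus $R_*<R_0$. The strict monotonicity $\p_r u_* <0$ on $\overline{B_{R_*}}^{\complement}$ then follows by running the same ODE forward: on $(R_*,R_0)$ the quantity $r^{d-1}u_*'$ strictly decreases from $0$ and is hence negative, while on $(R_0,\infty)$ it is nondecreasing but must remain $\le 0$ (otherwise $u_*$ could not decay), with any interior zero trapping $u_*$ at a constant on a halfline and so contradicting \Cref{rem:Landis}.

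For uniqueness, \Cref{rem:simple-obervations} applied to both extremal minimizers (whose coincidence sets are balls, hence have $\mL^d$-null boundary) yields
\begin{equation*}
m_* = \mF(\underline{u}_*) = \int_{B_{\underline{R}}} V(x)\,\rmd x = \int_{B_{\overline{R}}} V(x)\,\rmd x = \mF(\overline{u}_*),
\end{equation*}
so $\int_{B_{\overline{R}}\setminus B_{\underline{R}}} V(x)\,\rmd x = 0$. Since $\underline{R}\le \overline{R}<R_0$ and $V<0$ a.e.\ on $B_{R_0}$, the annulus must be $\mL^d$-null, forcing $\underline{R}=\overline{R}=:R_*$. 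On $(R_*,\infty)$ both radial functions $\underline{u}_*$ and $\overline{u}_*$ satisfy the same second-order linear ODE $(r^{d-1} u')'=r^{d-1} V(r) u$ with identical initial data $u(R_*)=1$, $u'(R_*)=0$; Picard--Lindel\"of uniqueness (using $V\in L^{\infty}$) gives $\underline{u}_*=\overline{u}_*$ on $(R_*,\infty)$, and therefore on all of $\mR^d$. The pinching $\underline{u}_*\le u_*\le \overline{u}_*$ from \Cref{thm:existence-minimizer}\ref{item:smallest-minimizer} then promotes this to uniqueness among all global minimizers. The main obstacle in the whole plan is the radial ``no-gap'' analysis, in which both halves of the dichotomy on $V$ are indispensable: strict negativity inside $B_{R_0}$ drives the strict monotonicity of $r^{d-1} u_*'$ on any would-be gap, while nonnegativity outside supplied the preliminary localization $\{u_*=1\}\subset \overline{B_{R_0}}$ in the first place.
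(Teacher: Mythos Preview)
Your proposal is correct and mirrors the paper's proof: rotational invariance gives the radial symmetry of $\underline{u}_*$ and $\overline{u}_*$; an ODE/integration-by-parts ``no-gap'' argument on the radial profile pins down $\{u_*=1\}=\overline{B_{R_*}}$ and the strict monotonicity; and \Cref{rem:simple-obervations} together with the strict decrease of $r\mapsto\int_{B_r}V$ on $(0,R_0]$ plus ODE uniqueness yields $\underline{u}_*=\overline{u}_*$, whence the pinching from \Cref{thm:existence-minimizer}\ref{item:smallest-minimizer} gives uniqueness. The only cosmetic difference is that the paper obtains $R_*<R_0$ via Hopf's lemma (\Cref{cor:Hopf}) at $\partial B_{R_*}$, whereas you deduce it directly from the ODE and decay; your handling of $\partial_r u_*<0$ on $(R_0,\infty)$ is in fact spelled out more carefully than in the paper, which only makes the computation explicit on $(R_*,R_0]$.
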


Analytical examples are provided in \Cref{sec:analytic-examples}. 
As illustrated in \Cref{fig:radially-symmetric-minimizer}, the numerical results indicate that the saturated region \eqref{eq:overpopulated-region} expands when the potential well becomes deeper (increasing $\beta$ or decreasing $\alpha$) or wider (increasing $G$).

\subsection{An unresolved problem: quasiconcavity of minimizers}  

It is instructive to compare \eqref{eq:main-Euler-Lagrange} with the following obstacle problem: 
\begin{equation}
\Delta u = \chi_{\{u \ge 0\}} \quad \text{and} \quad u\ge 0 \quad \text{in $\mR^{d}$.} \label{eq:obstacle-problem} 
\end{equation}
Although the obstacle problem \eqref{eq:obstacle-problem} has been fully characterized in \cite{EFW25nullQD,ESW23nullQD}, we recall that it is well known that all global solutions to \eqref{eq:obstacle-problem} are convex (see, for instance, \cite[Theorem~5.1]{PSU12FreeBoundary}). 

In contrast, as shown in the example in \Cref{sec:analytic-examples}, the solution to \eqref{eq:main-Euler-Lagrange} is generally not concave. Motivated by the numerical results presented in \Cref{sec:numerical}, it is natural to raise the following question: 

\begin{problem}\label{prob:conjecture} 
Suppose that $V(x)=\alpha^{2}$ for a.e. $x\in G^{\complement}$ and $V=-\beta^{2}$ for a.e. $x\in G$ for some $\alpha>0$, $\beta>0$ with \eqref{eq:trapped-condition} and a bounded \emph{convex} domain $\emptyset\neq G\subset \mR^{d}$. Is every local/global minimizer $u_{*}$ of \eqref{eq:main1} in $\mK$ necessarily quasiconcave, that is, does 
\begin{equation*}
u_{*}(tx + (1-t)y) \ge \min\{u_{*}(x),u_{*}(y)\} 
\end{equation*}
hold for all $x,y\in\mR^{d}$ and $0\le t\le 1$? 
\end{problem}

\begin{remark*} 
If $u_{*}\in\mK$ is quasiconcave, then the contact set $\{u_{*}=1\}$, together with its interior representing the overpopulated region, is convex. 
\end{remark*}

\addtocontents{toc}{\SkipTocEntry}
\subsection*{Organization}  

In \Cref{sec:proof-thm:continuity}, we prove \Cref{thm:Lewy-Stampacchia} and \Cref{thm:Harnack-application}. In \Cref{sec:minimizers}, we establish \Cref{thm:existence-minimizer} and then investigate the properties of global minimizers by proving \Cref{thm:positivity}. Next, in \Cref{sec:analytic-examples}, we prove \Cref{thm:radially-symmetric} and present analytic examples of radially symmetric solutions. Finally, \Cref{sec:numerical} presents several numerical examples. 
We also recall a comparison principle and several Harnack inequalities in \Cref{sec:harnack}, some geometric results on free boundaries in \Cref{sec:FB}, and basic properties of Bessel functions in \Cref{sec:Bessel}. 

\section{Properties of local minimizers\label{sec:proof-thm:continuity}} 

We begin with the proof of \Cref{thm:Lewy-Stampacchia}. 

\begin{proof}[Proof of \Cref{thm:Lewy-Stampacchia}]
For the sake of clarity, the proof is divided into several parts.

\medskip 

\noindent \emph{First, we prove the lower bound in \eqref{eq:PDE-minimizer}.}
Let $\phi \in C_{c}^{\infty}(\mR^{d})$ be a nonnegative test function. For each $\epsilon>0$, 
we define $v_{\epsilon}:=\max\{u_{*}-\epsilon\phi,0\}$. 
By \cite[Theorem~A.1]{KS00IntroductionVariationalInequalities}, 
we have $v_\epsilon \in H^1(\mathbb{R}^d)$. 
Moreover, since 
$0 \le v_{\epsilon} \le u_{*}\le 1$ in $\mR^{d}$, 
it follows that $v_\epsilon \in \mK$ for all $\epsilon > 0$. 
In addition, the same theorem yields the following properties:
\begin{equation}
(v_{\epsilon},\nabla v_{\epsilon})
= 
\left\{\begin{aligned}
& (0,0) && \text{a.e. on $\{v_{\epsilon}=0\}$,} \\ 
& {\displaystyle \big(u_{*}-\epsilon\phi, \nabla (u_{*} - \epsilon\phi)\big)} && \text{a.e. on $\{v_{\epsilon} >0\}$.}
\end{aligned}\right.  \label{veps}
\end{equation}
We remark that, for $\tilde\epsilon >\epsilon >0$,
\begin{align}
&\{v_{\tilde\epsilon} =0\}
\supset
\{v_\epsilon =0\}
,\quad
\bigcap_{\epsilon >0}\{v_\epsilon =0\}=\{u_*=0\},
\label{eps-set-v1}\\
&\{v_{\tilde\epsilon} >0\}
\subset
\{v_\epsilon >0\}
,\quad
\bigcup_{\epsilon >0}\{v_\epsilon >0\}=\{u_*>0\}.
\label{eps-set-v2}
\end{align}
Note that $v_{\epsilon}\rightarrow u_{*}$ in $H^{1}(\mR^{d})$ as $\epsilon\rightarrow 0_{+}$. 
Indeed, using \eqref{eps-set-v1}, this convergence follows from the estimate 
\begin{align*}
\norm{v_{\epsilon}-u_{*}}_{H^{1}(\mR^{d})}^{2} 
&= \int_{\{v_{\epsilon}=0\}} \left( \abs{\nabla u_{*}}^{2} + \abs{u_{*}}^{2} \right) \, \rmd x + \epsilon^{2}\int_{\{v_{\epsilon}>0\}} \left( \abs{\nabla\phi}^{2} + \abs{\phi}^{2} \right) \, \rmd x\\
&\le \int_{\{v_{\epsilon}=0\}} \left( \abs{\nabla u_{*}}^{2} + 
\abs{u_{*}}^{2} \right) \, \rmd x + \epsilon^{2}
\norm{\phi}_{H^{1}(\mR^{d})}^{2}\\
&\longrightarrow \int_{\{u_{*}=0\}} \left( \abs{\nabla u_{*}}^{2} + 
\abs{u_{*}}^{2} \right) \, \rmd x =0
\quad\mbox{as}~\epsilon \to 0_+.
\end{align*}

Since $u_{*}$ is a local minimizer of \eqref{eq:main1}, for sufficiently small $\epsilon>0$, we have
\begin{align}
0 &\le \mF(v_{\epsilon}) - \mF(u_{*})  \notag \\
 &= -2\epsilon \int_{\{v_{\epsilon}>0\}} \left( \nabla u_{*}\cdot\nabla \phi + V(x)u_{*}\phi \right) \, \rmd x \notag \\
& 
\quad + \epsilon^{2}\int_{\{v_{\epsilon}>0\}}\left(\abs{\nabla\phi}^{2}+V(x)\abs{\phi}^{2}\right)\,\rmd x  
- \int_{\{v_{\epsilon}=0\}} \left( \abs{\nabla u_{*}}^{2} + V(x)\abs{u_{*}}^{2} \right) \,\rmd x \notag \\
&\le 
-2\epsilon \int_{\{v_{\epsilon}>0\}} \left( \nabla u_{*}\cdot\nabla \phi + V(x)u_{*}\phi \right) \, \rmd x \notag \\
& 
\quad + \epsilon^{2}\int_{\{v_{\epsilon}>0\}}\left(\abs{\nabla\phi}^{2}+V(x)\abs{\phi}^{2}\right)\,\rmd x  
+\epsilon^{2}\norm{V_-}_{L^{\infty}(\mR^{d})}\int_{\{v_{\epsilon}=0\}}\abs{\phi}^{2}\,\rmd x,
 \label{eps-ineq}
\end{align}
where, since $u_{*}\le\epsilon\phi$ in $\{v_{\epsilon}=0\}$, we have used the following inequality:
\begin{equation*}
- \int_{\{v_{\epsilon}=0\}} \left( \abs{\nabla u_{*}}^{2} + V(x)\abs{u_{*}}^{2} \right) \,\rmd x\le 
\int_{\{v_{\epsilon}=0\}}V_-(x)\abs{u_{*}}^{2}\,\rmd x \le
\epsilon^{2}\norm{V_-}_{L^{\infty}(\mR^{d})}\int_{\{v_{\epsilon}=0\}}\abs{\phi}^{2}\,\rmd x. 
\end{equation*}
Dividing the inequality \eqref{eps-ineq} by $2\epsilon$ and letting $\epsilon\rightarrow 0_{+}$, 
in view of \eqref{eps-set-v2}, the Lebesgue dominated convergence theorem ensures that 
\begin{equation*}
\begin{aligned} 
0 &\le -\int_{\{u_{*}>0\}} \left( \nabla u_{*}\cdot\nabla \phi + V(x)u_{*}\phi \right) \,\rmd x \\
& = -\int_{\mR^{d}} \left( \nabla u_{*}\cdot\nabla \phi + V(x)u_{*}\phi \right) \,\rmd x = \,_{\mD'(\mR^{d})}\br{\Delta u_{*} - Vu_{*} , \phi}_{\mD(\mR^{d})}, 
\end{aligned}
\end{equation*}
which conclude the lower bound in \eqref{eq:PDE-minimizer}. 

\medskip 

\noindent \emph{Next, we prove the upper bound in \eqref{eq:PDE-minimizer}.} Let $\phi\in C_{c}^{\infty}(\mR^{d})$ be a nonnegative test function. For each $\epsilon>0$, we define  
$w_{\epsilon}:=\min\{u_{*}+\epsilon\phi,1\}\in H^1_{\rm loc}(\mR^d)$.
Similarly to \eqref{veps},
we have
\begin{equation}
(w_{\epsilon},\nabla w_{\epsilon})
= 
\left\{\begin{aligned}
& (1,0) && \text{a.e. on $\{w_{\epsilon}=1\}$,} \\ 
& {\displaystyle \big(u_{*}+\epsilon\phi, \nabla (u_{*} +\epsilon\phi)\big)} && \text{a.e. on $\{w_{\epsilon} <1\}$.}
\end{aligned}\right.  \label{weps}
\end{equation}
Since 
$0\le u_* \le w_{\epsilon} \le u_{*}+\epsilon \phi$ in $\mR^{d}$, 
it follows that $w_\epsilon \in \mK$ for all $\epsilon > 0$. 
We remark that, for $\tilde\epsilon >\epsilon >0$,
\begin{align}
&\{w_{\tilde\epsilon} =1\}
\supset
\{w_\epsilon =1\}
,\quad
\bigcap_{\epsilon >0}\{w_\epsilon =1\}=\{u_*=1\},
\label{eps-set-w1}\\
&\{w_{\tilde\epsilon} <1\}
\subset
\{w_\epsilon <1\}
,\quad
\bigcup_{\epsilon >0}\{w_\epsilon <1\}=\{u_*<1\}.
\label{eps-set-w2}
\end{align}

Note that $w_{\epsilon}\rightarrow u_{*}$ in $H^{1}(\mR^{d})$ as $\epsilon\rightarrow 0_{+}$.
Indeed, using \eqref{eps-set-w1}, this convergence follows from the estimate 
\begin{align*}
\norm{w_{\epsilon}-u_{*}}_{H^{1}
(\mR^{d})}^{2} 
&= 
\int_{\{w_\epsilon=1\}} (\abs{\nabla u_{*}}^{2} + \abs{1-u_{*}}^{2}) \,\rmd x + \epsilon^{2} \int_{\{w_\epsilon <1\}} (\abs{\nabla\phi}^{2}+\abs{\phi}^{2})\,\rmd x\\
&\le \int_{\{w_{\epsilon}=1\}} \left( \abs{\nabla u_{*}}^{2} + 
\abs{1-u_{*}}^{2} \right) \, \rmd x + \epsilon^{2}
\norm{\phi}_{H^{1}(\mR^{d})}^{2}\\
&\longrightarrow \int_{\{u_{*}=1\}} \left( \abs{\nabla u_{*}}^{2} + 
\abs{1-u_{*}}^{2} \right) \, \rmd x =0
\quad\mbox{as}~\epsilon \to 0_+.
\end{align*}

Since $u_{*}$ is a local minimizer of \eqref{eq:main1}, for sufficiently small $\epsilon>0$, we have
\begin{align}
0 &\le \mF(w_{\epsilon}) - \mF(u_{*})  \notag \\
 &= 2\epsilon \int_{\{w_{\epsilon}<1\}} \left( \nabla u_{*}\cdot\nabla \phi + V(x)u_{*}\phi \right) \, \rmd x \notag \\
& 
\quad + \epsilon^{2}\int_{\{w_{\epsilon}<1\}}\left(\abs{\nabla\phi}^{2}+V(x)\abs{\phi}^{2}\right)\,\rmd x  
- \int_{\{w_{\epsilon}=1\}} \left( \abs{\nabla u_{*}}^{2} + V(x)(\abs{u_{*}}^{2}-1) \right) \,\rmd x \notag \\
&\le 
2\epsilon \int_{\{w_{\epsilon}<1\}} \left( \nabla u_{*}\cdot\nabla \phi + V(x)u_{*}\phi \right) \, \rmd x \notag \\
& 
\quad + \epsilon^{2}\int_{\{w_{\epsilon}<1\}}\left(\abs{\nabla\phi}^{2}+V(x)\abs{\phi}^{2}\right)\,\rmd x  
+2\epsilon\int_{\{w_{\epsilon}=1\}}V_+(x)\phi\,\rmd x,
\label{eps-ineq-w}
\end{align}
where, since $1-u_{*}\le\epsilon\phi$ in $\{w_{\epsilon}=1\}$, we have used the following inequality:
\begin{align*}
- \int_{\{w_{\epsilon}=1\}} \left( \abs{\nabla u_{*}}^{2} + V(x)(\abs{u_{*}}^{2}-1) \right) \,\rmd x
&\le 
\int_{\{w_{\epsilon}=1\}}V(x)
(1-u_{*})(1+u_{*})\,\rmd x\\
&\le
2\epsilon\int_{\{w_{\epsilon}=1\}}V_+(x)\phi\,\rmd x. 
\end{align*}
Dividing the inequality \eqref{eps-ineq-w} by $2\epsilon$ and letting $\epsilon\rightarrow 0_{+}$, 
in view of \eqref{eps-set-w2}, the Lebesgue dominated convergence theorem ensures that 
\begin{equation*}
\begin{aligned} 
0 &\le \int_{\{u_{*}<1\}} \left( \nabla u_{*}\cdot\nabla \phi + V(x)u_{*}\phi \right) \,\rmd x 
+\int_{\{u_{*}=1\}} V_+(x)\phi \,\rmd x \\
& = \int_{\mR^{d}} \left( \nabla u_{*}\cdot\nabla \phi + V(x)u_{*}\phi \right) \,\rmd x 
+\int_{\{u_{*}=1\}} V_-(x)\phi \,\rmd x \\
&= \,_{\mD'(\mR^{d})}\br{-\Delta u_{*} +Vu_{*}+V_-\,\chi_{\{u_*=1\}} , \,\phi}_{\mD(\mR^{d})}, 
\end{aligned}
\end{equation*}
which conclude the upper bound in \eqref{eq:PDE-minimizer}.

\medskip 

\emph{Finally, we summarize our conclusions.} Since $V_{-}\chi_{\{u_{*}=1\}} \in L^{\infty}(\mR^{d})$, it follows that $(\Delta - V) u_{*} \in L^{\infty}(\mR^{d})$. By elliptic regularity, this yields $u_{*} \in W_{\rm loc}^{2,p}$ for all $1\le p<\infty$, and thus \eqref{eq:main-Euler-Lagrange} follows. 
\end{proof}

We next present the proof of \Cref{thm:Harnack-application}. 

\begin{proof}[Proof of \Cref{thm:Harnack-application}]
It suffices to show \Cref{thm:Harnack-application} for all $x_{0}\in\{v=0\}$. Fix 
\begin{equation}
r_{0} := \sup \left\{ r>0 : B_{r}(x_{0})\subset\Omega \right\} \in (0,\infty]. 
\end{equation}
For each $0<r<r_{0}$, we first apply a variant of the Harnack inequality (\Cref{lem:Harnack}\ref{itm:Harnack-b}) to obtain 
\begin{equation}
v(y+x_{0}) \le r^{d} \frac{r+\abs{y}}{(r-\abs{y})^{d-1}}\left(\frac{1}{r^{2}} \dashint_{\partial B_{r}(x_{0})} v \,\rmd S + \frac{M_{2}}{2d} \right) \quad \text{for all $y\in B_{r}(0)$.} \label{eq:Harnack-apply1} 
\end{equation}
Setting $y=0$ in \eqref{eq:Harnack-apply1} yields 
\begin{equation*}
v(x_{0}) \le  \dashint_{\partial B_{r}(x_{0})} v \,\rmd S + r^{2} \frac{M_{2}}{2d}  \quad \text{for all $0<r<r_{0}$,} 
\end{equation*}
that is, 
\begin{equation}
\frac{1}{r^{2}} \dashint_{\partial B_{r}(x_{0})} v \,\rmd S \ge \frac{1}{r^{2}}v(x_{0}) - \frac{M_{2}}{2d} \quad \text{for all $0<r<r_{0}$.} \label{eq:Harnack-apply2} 
\end{equation}
Let $0<\epsilon<r_{0}-\sfd_{\{v=0\}}(x_{0})$ and we apply another variant of the Harnack inequality (\Cref{lem:Harnack}\ref{itm:Harnack-a}) with $r=\sfd_{\{v=0\}}(x_{0})+\epsilon$ to see that 
\begin{equation}
\begin{aligned}
v(x) &\ge (\sfd_{\{v=0\}}(x_{0})+\epsilon)^{d} \frac{\sfd_{\{v=0\}}(x_{0})+\epsilon-\abs{x}}{(\sfd_{\{v=0\}}(x_{0})+\epsilon+\abs{x})^{d-1}} \times \\ 
& \quad \times \left(\frac{1}{(\sfd_{\{v=0\}}(x_{0})+\epsilon)^{2}} \dashint_{\partial B_{\sfd_{\{v=0\}}(x_{0})+\epsilon}(x_{0})} v \,\rmd S - \frac{2^{d-1}M_{1}}{d} \right)
\end{aligned} \label{eq:important-obs-Harnack}
\end{equation}
for all $x\in B_{\sfd_{\{v=0\}}(x_{0})+\epsilon}(x_{0})$. Since $B_{\sfd(x_{0})}(x_{0})$ touches the boundary $\partial\{v=0\}$, then there exists $x'\in B_{\sfd_{\{v=0\}}(x_{0})+\epsilon}(x_{0})$ such that $v(x')=0$. Substituting $x=x'$ into \eqref{eq:important-obs-Harnack}, and then taking the limit $\epsilon\rightarrow 0$ gives 
\begin{equation}
\frac{1}{\sfd_{\{v=0\}}(x_{0})^{2}} \dashint_{\partial B_{\sfd_{\{v=0\}}(x_{0})}(x_{0})} v \,\rmd S \le \frac{2^{d-1}M_{1}}{d} \label{eq:important-obs-Harnack-limit} 
\end{equation}
We now combine \eqref{eq:Harnack-apply2} (with $r=\sfd_{\{v=0\}}(x_{0})$) and \eqref{eq:important-obs-Harnack-limit} to deduce 
\begin{equation*}
\frac{2^{d-1}M_{1}}{d} \ge \frac{1}{\sfd_{\{v=0\}}(x_{0})^{2}}v(x_{0}) - \frac{M_{2}}{2d}, 
\end{equation*}
which concludes \Cref{thm:Harnack-application}\ref{itm:Harnack1}.

On the other hand, we use another variant of Harnack inequality (\Cref{lem:Harnack}) with $M_{3}=\max\{M_{1},M_{2}\}$ and $r=\sfd_{\{v=0\}}(x_{0})$ to see that 
\begin{equation}
\abs{\nabla v(x)} \le  \frac{d}{\sfd_{\{v=0\}}(x_{0})}\dashint_{\partial B_{\sfd_{\{v=0\}}(x_{0})}(x_{0})} v \,\rmd S + \frac{\max\{M_{1},M_{2}\}}{d+1}\sfd_{\{v=0\}}(x_{0})  \label{eq:Harnack-apply3}
\end{equation}
for all $x\in B_{\sfd_{\{v=0\}}(x_{0})}(x_{0})$. Finally, combining \eqref{eq:important-obs-Harnack-limit} with \eqref{eq:Harnack-apply3}, we conclude \Cref{thm:Harnack-application}\ref{itm:Harnack2}.
\end{proof}

Finally, we conclude this section by proving \Cref{rem:simple-obervations}, which records a useful property of local minimizers. 

\begin{proof}[Proof of \Cref{rem:simple-obervations}]
By the assumption $V\ge \alpha^{2}$ in $\overline{B_{R}}^{\complement}$ and the Lax-Milgram theorem \cite[Corollary~5.8]{Bre11PDE}, as well as \Cref{thm:radially-symmetric}, it follows that $u_{*}\in H^{1}(\overline{B_{R}}^{\complement})$ is the unique minimizer of the following minimizing problem: 
\begin{equation*}
\begin{aligned}
& \text{minimizing} && \frac{1}{2}\int_{\overline{B_{R}}^{\complement}}(\abs{\nabla v}^{2} + Vv^{2})\,\rmd x \\ 
& \text{subject to} && \left\{ v\in H^{1}(\overline{B_{R}}^{\complement}) : v|_{\partial B_{R}} = u_{*}\right\}, 
\end{aligned}
\end{equation*}
which can be characterized by 
\begin{equation*}
\int_{\overline{B_{R}}^{\complement}} (\nabla u_{*}\cdot\nabla\varphi + Vu_{*}\varphi)\,\rmd x = 0 \text{ for all $\varphi\in H_{0}^{1}(\overline{B_{R}}^{\complement})$.}  
\end{equation*}
Let $\phi\in C_{c}^{\infty}(\mR^{d})$ satisfy $0\le\phi\le 1$ in $\mR^{d}$ and $\phi=1$ in a neighborhood of $\overline{B_{R}}$. Choosing 
\begin{equation*}
\varphi = u_{*} - u_{*}\phi \in H_{0}^{1}(\overline{B_{R}}^{\complement}), 
\end{equation*}
we reach 
\begin{equation*}
\begin{aligned}
\int_{\overline{B_{R}}^{\complement}} (\abs{\nabla u_{*}}^{2} + Vu_{*}^{2})\,\rmd x &= \int_{\overline{B_{R}}^{\complement}} (\nabla u_{*}\cdot\nabla (u_{*}\phi) + Vu_{*}^{2}\phi)\,\rmd x \\ 
& = -\int_{\partial B_{R}} u_{*}\partial_{r}u_{*} \,\rmd S_{x} - \int_{\overline{B_{R}}^{\complement}} (\Delta u_{*}-Vu_{*}) u_{*}\phi\,\rmd x \\ 
& = -\int_{\partial B_{R}} u_{*}\partial_{r}u_{*} \,\rmd S_{x},
\end{aligned}
\end{equation*}
where $r=|x|$. 
Since $\{u_{*}=1\}\subset B_{R}$, consequently by \eqref{eq:main-Euler-Lagrange}, 
\begin{equation*}
\begin{aligned}
\mF(u_{*}) &= \int_{B_{R}} (\abs{\nabla u_{*}}^{2} + V\abs{u_{*}}^{2})\,\rmd x - \int_{\partial B_{R}} u_{*}\partial_{r}u_{*} \,\rmd S_{x} \\ 
&= \int_{B_{R}} u_{*}(-\Delta u_{*}+Vu_{*})\,\rmd x \\
&= \int_{\{u_{*}=1\}} V\,\rmd x - \int_{\partial\{u_{*}=1\}} \Delta u_{*}\,\rmd x
\end{aligned}
\end{equation*}
and the proposition follows. 
\end{proof}

\section{Existence and positivity of global minimizers\label{sec:minimizers}}  

First of all, we first establish the following lemma by adapting the ideas in \cite[Lemma~1.1]{GS96FreeBoundaryPotential}. Similar results have previously been used \cite{FP84FB}. 

\begin{lemma}\label{lem:comparison-functionals}
Let $V_{1},V_{2}\in L^{\infty}(\mR^{d})$ be potentials satisfying $V_{1}\ge V_{2}$ in $\mR^{d}$. For any $u_{1},u_{2}\in\mK$, then $\min\{u_{1},u_{2}\} , \max\{u_{1},u_{2}\} \in \mK$ and the following inequality holds:  
\begin{equation}
\mF_{V_{1}}(\min\{u_{1},u_{2}\}) + \mF_{V_{2}}(\max\{u_{1},u_{2}\}) \le \mF_{V_{1}}(u_{1}) + \mF_{V_{2}}(u_{2}). \label{eq:comparison-functionals}
\end{equation}
In particular: 
\begin{enumerate}
\renewcommand{\labelenumi}{\theenumi}
\renewcommand{\theenumi}{\rm (\alph{enumi})} 
\item \label{itm:minimizing-sequence-control} if $u_{2}$ minimizes $\mF_{V_{2}}$ in $\mK$, then $\mF_{V_{1}}(\min\{u_{1},u_{2}\})\le \mF_{V_{1}}(u_{1})$. 
\item \label{itm:lower-boun-control} if $u_{1}$ minimizes $\mF_{V_{1}}$ in $\mK$, then $\mF_{V_{2}}(\max\{u_{1},u_{2}\})\le \mF_{V_{2}}(u_{2})$. 
\item \label{itm:pairwise-comparison} if $V_{1}=V_{2}=V$ and both $u_{1}$ and $u_{2}$ minimize $\mF_{V}$ in $\mK$, then both $\max\{u_{1},u_{2}\}$ and $\min\{u_{1},u_{2}\}$ are also global minimizers of $\mF_{V}$ in $\mK$. 
\end{enumerate}
\end{lemma}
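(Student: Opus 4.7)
The plan is to exploit the standard decomposition $\mR^{d}=A_{1}\cup A_{2}$, where $A_{1}:=\{u_{1}\le u_{2}\}$ and $A_{2}:=\{u_{1}>u_{2}\}$, and to observe that $\min\{u_{1},u_{2}\}$ and $\max\{u_{1},u_{2}\}$ are obtained from $u_{1},u_{2}$ by swapping them on $A_{2}$. Membership of $m:=\min\{u_{1},u_{2}\}$ and $M:=\max\{u_{1},u_{2}\}$ in $\mK$ is immediate: the classical Stampacchia lattice property of $H^{1}$ (e.g.\ \cite[Theorem~A.1]{KS00IntroductionVariationalInequalities}) gives $m,M\in H^{1}(\mR^{d})$, and the pointwise bounds $0\le m\le M\le 1$ follow from $0\le u_{i}\le 1$.

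The heart of the argument is the a.e.\ identity
\begin{equation*}
\abs{\nabla m}^{2}+\abs{\nabla M}^{2}=\abs{\nabla u_{1}}^{2}+\abs{\nabla u_{2}}^{2},\qquad m^{2}+M^{2}=u_{1}^{2}+u_{2}^{2},
\end{equation*}
which on $A_{1}$ is trivial and on $A_{2}$ amounts to a relabeling, using the fact that $\nabla u_{1}=\nabla u_{2}$ a.e.\ on $\{u_{1}=u_{2}\}$. The Dirichlet parts of the two functionals therefore simply rearrange: $\int(\abs{\nabla m}^{2}+\abs{\nabla M}^{2})=\int(\abs{\nabla u_{1}}^{2}+\abs{\nabla u_{2}}^{2})$. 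For the potential parts, a pointwise computation gives
\begin{equation*}
V_{1}u_{1}^{2}+V_{2}u_{2}^{2}-(V_{1}m^{2}+V_{2}M^{2})=(V_{1}-V_{2})(u_{1}^{2}-u_{2}^{2})\,\chi_{A_{2}}\ge 0,
\end{equation*}
since $V_{1}\ge V_{2}$ and $u_{1}^{2}\ge u_{2}^{2}$ on $A_{2}$ (both functions being nonnegative). Integrating and combining with the gradient identity yields \eqref{eq:comparison-functionals}.

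The three consequences are then immediate. For \ref{itm:minimizing-sequence-control}, if $u_{2}$ minimizes $\mF_{V_{2}}$, then $\mF_{V_{2}}(M)\ge \mF_{V_{2}}(u_{2})$, and rearranging \eqref{eq:comparison-functionals} gives $\mF_{V_{1}}(m)\le \mF_{V_{1}}(u_{1})$; \ref{itm:lower-boun-control} is the symmetric statement. For \ref{itm:pairwise-comparison}, taking $V_{1}=V_{2}=V$ and applying \ref{itm:minimizing-sequence-control} with the roles of $u_{1},u_{2}$ symmetric forces $\mF_{V}(m)=\mF_{V}(u_{1})=\mF_{V}(u_{2})=\mF_{V}(M)=m_{*}$, so both extrema are global minimizers.

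There is no real obstacle here; the only subtle point is the a.e.\ handling of the set $\{u_{1}=u_{2}\}$ and the use of $\nabla u_{1}=\nabla u_{2}$ a.e.\ there, which is standard but should be invoked explicitly so that the gradient identity is justified on all of $\mR^{d}$.
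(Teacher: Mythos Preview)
Your proof is correct and follows essentially the same approach as the paper: both arguments rest on the pointwise identities $\abs{\nabla m}^{2}+\abs{\nabla M}^{2}=\abs{\nabla u_{1}}^{2}+\abs{\nabla u_{2}}^{2}$ and $m^{2}+M^{2}=u_{1}^{2}+u_{2}^{2}$, together with the observation that the assumption $V_{1}\ge V_{2}$ makes the potential contribution go the right way. The only cosmetic difference is that the paper writes the potential estimate as $\int V_{1}(\abs{u_{1}}^{2}-\abs{v}^{2})\ge \int V_{2}(\abs{u_{1}}^{2}-\abs{v}^{2})$ and then cancels using $m^{2}+M^{2}=u_{1}^{2}+u_{2}^{2}$, whereas you compute the pointwise difference $(V_{1}-V_{2})(u_{1}^{2}-u_{2}^{2})\chi_{A_{2}}$ directly; these are the same computation.
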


\begin{proof}
Let 
\begin{equation*}
v := \min\{u_{1},u_{2}\}\in\mK \quad \text{and} \quad w := \max\{u_{1},u_{2}\}\in\mK. 
\end{equation*}
First of all, we see that  
\begin{equation*}
\int_{\mR^{d}} \left(\abs{\nabla u_{1}}^{2}+\abs{\nabla u_{2}}^{2}\right)\,\rmd x = \int_{\mR^{d}} \left( \abs{\nabla v}^{2} + \abs{\nabla w}^{2} \right)\,\rmd x. 
\end{equation*}
On the other hand, we see that 
\begin{equation*}
\begin{aligned}
& \int_{\mR^{d}} V_{1}\abs{u_{1}}^{2}\,\rmd x + \int_{\mR^{d}} V_{2}\abs{u_{2}}^{2}\,\rmd x \\ 
& \quad = \int_{\mR^{d}} V_{1}\abs{v}^{2}\,\rmd x + \int_{\mR^{d}} V_{2}\abs{w}^{2}\,\rmd x + \int_{\mR^{d}} V_{1} \overbrace{\left( \abs{u_{1}}^{2} - \abs{v}^{2} \right)}^{\ge\,0} \,\rmd x + \int_{\mR^{d}} V_{2} \left( \abs{u_{2}}^{2} - \abs{w}^{2} \right) \,\rmd x \\ 
& \quad \ge \int_{\mR^{d}} V_{1}\abs{v}^{2}\,\rmd x + \int_{\mR^{d}} V_{2}\abs{w}^{2}\,\rmd x + \int_{\mR^{d}} V_{2} \left( \abs{u_{1}}^{2} - \abs{v}^{2} \right) \,\rmd x + \int_{\mR^{d}} V_{2} \left( \abs{u_{2}}^{2} - \abs{w}^{2} \right) \,\rmd x \\ 
& \quad = \int_{\mR^{d}} V_{1}\abs{v}^{2}\,\rmd x + \int_{\mR^{d}} V_{2}\abs{w}^{2}\,\rmd x. 
\end{aligned}
\end{equation*}
Combining the two equations above yields \eqref{eq:comparison-functionals}. 
For \Cref{lem:comparison-functionals}\ref{itm:minimizing-sequence-control}, since $u_{2}\in\mK$ minimizes $\mF_{V_{2}}$, we have $\mF_{V_{2}}(\max\{u_{1},u_{2}\}) \ge \mF_{V_{2}}(u_{2})$, and the conclusion follows. The proofs of \Cref{lem:comparison-functionals}\ref{itm:lower-boun-control} and \Cref{lem:comparison-functionals}\ref{itm:pairwise-comparison} proceed similarly. 
\end{proof}

We see that $\mF|_{\mK}$ is bounded from below: 
\begin{equation*}
\mF(u) \ge \int_{B_{R}} V(x)\abs{u}^{2}\,\rmd x \ge - \abs{B_{R}} \norm{V_{-}}_{L^{\infty}(\mR^{d})} \quad \text{for all $u\in\mK$.} 
\end{equation*}
Since the eigenvalue $\lambda^{*}(G)$ is simple and has a positive eigenfunction $v^{*}\in H_{0}^{1}(G)\cap C^{\infty}(G)$ \cite[Theorem~8.38]{GT01Elliptic} (normalized such that its zero extension $\chi_{G}v^{*} \in H^{1}(\mR^{d})$ satisfies $0\le \chi_{G}v^{*}\le 1$), then we see that 
\begin{equation*}
\mF(\chi_{G}v^{*}) = \int_{G} \left(\lambda^{*}(G)+V(x)\right)\abs{v^{*}}^{2}\,\rmd x \le (\lambda^{*}(G)-\beta^{2}) \int_{G} \abs{v^{*}}^{2}\,\rmd x \overset{\eqref{eq:trapped-condition}}{<} 0, 
\end{equation*}
and we conclude
\begin{equation}
m_{*} := \inf_{u\in\mK}\mF(u) < 0. \label{eq:nontrivial} 
\end{equation}
For each $\alpha>0$, we define 
\begin{equation*}
g_{\alpha}(r) := \abs{r}^{-\frac{d-2}{2}}K_{\frac{d-2}{2}}(\alpha r) \quad \text{for all $r>0$.}
\end{equation*}
Using standard properties of modified Bessel functions (see \Cref{sec:Bessel}), one readily verifies that the radially symmetric function 
\begin{equation*}
u(x) := g_{\alpha}(\abs{x}) \quad \text{for all $x\in\mR^{d}$}
\end{equation*}
satisfies the following properties: 
\begin{itemize}
\item $u(x)>0$ for all $x\neq 0$. 
\item $u\in H^{1}(\mR^{d}\setminus\overline{B_{R}})$ for all $R>0$. 
\item $(\Delta-\alpha^{2})u=0$ in $\mR^{d}\setminus\{0\}$. 
\item $u(x)\le Cr^{-\frac{d-1}{2}}e^{-\alpha\abs{x}}$ for all $\abs{x}\ge 1$. 
\end{itemize}
We now prove the following lemma. 

\begin{lemma}\label{lem:compactness-lemma}
We assume that $V$, $\alpha$ and $R$ satisfy \Cref{assu:1}. 
\begin{enumerate}
\renewcommand{\labelenumi}{\theenumi}
\renewcommand{\theenumi}{\rm (\alph{enumi})} 
\item\label{itm:compactness-lemma-part1} We define 
\begin{equation*}
\mK_{0} := \left\{ u\in\mK : u\le u_{0} \right\} ,\quad u_{0}(x):=\frac{g_{\alpha}(\abs{x})}{g_{\alpha}(R)}.  
\end{equation*} 
Then for each $u\in \mK\setminus\mK_{0}$, there exists $\tilde{u}\in\mK_{0}$ such that $\mF_{V}(\tilde{u}) < \mF_{V}(u)$. 
\item\label{itm:compactness-lemma-part2} For each $a\in\mR$, the space 
\begin{equation*}
\mK_{0}(a) := \left\{ u\in\mK_{0} : \mF_{V}(u) \le a \right\} 
\end{equation*} 
is bounded in $H^{1}(\mR^{d})$ and is compact in $L^{2}(\mR^{d})$. 
\end{enumerate}
\end{lemma}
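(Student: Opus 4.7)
The plan for part \ref{itm:compactness-lemma-part1} is to choose $\tilde{u} := \min\{u, u_0\}$ and show that this trimming strictly decreases the energy. A preliminary observation is that, by the monotonicity of $K_{(d-2)/2}$ together with the normalization $u_0 \equiv 1$ on $\partial B_R$, one has $u_0 \ge 1$ on $\overline{B_R}$ and $u_0 \le 1$ on $B_R^{\complement}$. Since $u \le 1$, the set $\{u > u_0\}$ is necessarily contained in $B_R^{\complement}$, so $\tilde{u}$ coincides with $u$ on $B_R$, lies in $\mK$ (via $0 \le \tilde{u} \le u \le 1$), and satisfies $\tilde{u} \le u_0$, hence $\tilde{u} \in \mK_0$. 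Setting $f_+ := (u - u_0)_+$, the boundary inequality $u \le 1 = u_0$ on $\partial B_R$ together with the exponential decay of $u_0$ at infinity places $f_+$ in $H_0^1(B_R^{\complement})$.

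A direct expansion, using that $\tilde{u} = u_0$ on $\{f_+ > 0\}$, gives
\begin{equation*}
\mF_V(u) - \mF_V(\tilde{u}) = \int_{B_R^{\complement}} \bigl( \abs{\nabla f_+}^{2} + V f_+^{2} \bigr)\,\rmd x + 2\int_{B_R^{\complement}} \bigl(\nabla u_0 \cdot \nabla f_+ + V u_0 f_+\bigr)\,\rmd x.
\end{equation*}
Since $(\Delta - \alpha^2) u_0 = 0$ on $B_R^{\complement}$ and $f_+ \in H_0^1(B_R^{\complement})$, integration by parts (with $f_+ = 0$ on $\partial B_R$ and exponential decay at infinity) converts the second term to $2\int_{B_R^{\complement}} (V - \alpha^2) u_0 f_+\,\rmd x$, which is nonnegative by \Cref{assu:1}. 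The first term dominates $\int_{B_R^{\complement}}\bigl(\abs{\nabla f_+}^{2} + \alpha^{2} f_+^{2}\bigr)\,\rmd x$, which is strictly positive because $u \notin \mK_0$ forces $f_+ \not\equiv 0$. Therefore $\mF_V(\tilde{u}) < \mF_V(u)$.

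For part \ref{itm:compactness-lemma-part2}, the idea is to combine a uniform pointwise envelope with Rellich--Kondrachov and tightness. Every $u \in \mK_0$ satisfies $0 \le u \le \min\{u_0, 1\}$, and the envelope $\min\{u_0, 1\}$ lies in $L^2(\mR^d)$ -- it equals $1$ on $B_R$ and decays exponentially on $B_R^{\complement}$. This yields a uniform $L^2$ bound, and then
\begin{equation*}
\int_{\mR^d} \abs{\nabla u}^{2}\,\rmd x = \mF_V(u) - \int_{\mR^d} V u^{2}\,\rmd x \le a + \norm{V}_{L^{\infty}(\mR^{d})} \norm{u}_{L^{2}(\mR^{d})}^{2}
\end{equation*}
delivers a uniform $H^1$ bound on $\mK_0(a)$.

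For the $L^2$-compactness, let $\{u_n\} \subset \mK_0(a)$; the $H^1$-bound allows extraction of a subsequence converging weakly in $H^1(\mR^d)$ to some $u_\infty$, and Rellich--Kondrachov upgrades this to strong $L^2(B_N)$-convergence for each $N > 0$. Uniform tightness follows from the envelope, since $\sup_n \int_{B_N^{\complement}} u_n^{2}\,\rmd x \le \int_{B_N^{\complement}} u_0^{2}\,\rmd x \to 0$ as $N \to \infty$, so local strong convergence upgrades to strong convergence in $L^2(\mR^d)$. The main obstacle, I expect, is the integration-by-parts step in part \ref{itm:compactness-lemma-part1}: one must verify rigorously that $f_+$ lies in $H_0^1(B_R^{\complement})$ and that the boundary contributions at infinity genuinely vanish. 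Both points follow from the construction of $u_0$ from $g_\alpha$ and the exponential decay of $K_{(d-2)/2}$ summarized in the appendix.
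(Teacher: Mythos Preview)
Your proof is correct. For part~\ref{itm:compactness-lemma-part2} your argument coincides with the paper's: both use the pointwise envelope $u\le u_{0}$ to obtain uniform $L^{2}$-tightness on $B_{N}^{\complement}$, combine this with Rellich--Kondrachov on balls, and read off the $H^{1}$-bound from the energy constraint.

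For part~\ref{itm:compactness-lemma-part1} you take a genuinely different route. The paper replaces $u$ on $\overline{B_{R}}^{\complement}$ by the exterior Dirichlet minimizer $w$ of $\int(\abs{\nabla v}^{2}+Vv^{2})$ with boundary data $u|_{\partial B_{R}}$, produced via Lax--Milgram, and then invokes the comparison principle (\Cref{lem:comparison-principle}) twice to pin $0\le w\le u_{0}$; the strict inequality comes from $u\not\equiv w$. You instead truncate directly by $\tilde{u}=\min\{u,u_{0}\}$ and compute the energy drop by hand: expanding in $f_{+}=(u-u_{0})_{+}\in H_{0}^{1}(B_{R}^{\complement})$ and testing the equation $(\Delta-\alpha^{2})u_{0}=0$ against $f_{+}$ converts the cross term into $2\int_{B_{R}^{\complement}}(V-\alpha^{2})u_{0}f_{+}\ge 0$, while the quadratic term is $\ge\alpha^{2}\norm{f_{+}}_{L^{2}}^{2}>0$. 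Your approach is more elementary---it bypasses both Lax--Milgram and the comparison principle---at the cost of producing a $\tilde{u}$ with possibly higher energy than the paper's (the paper's $\tilde{u}$ is exterior-optimal). For the purposes of the lemma this makes no difference. The one point worth stating more carefully is that $f_{+}\in H_{0}^{1}(B_{R}^{\complement})$: this follows because $u_{0}\in H^{1}(B_{R}^{\complement})$ (exponential decay of $K_{(d-2)/2}$) so that $(u-u_{0})_{+}\in H^{1}(B_{R}^{\complement})$, and the trace of $u-u_{0}$ on $\partial B_{R}$ is $\le 0$ since $u\le 1=u_{0}$ there, whence the trace of $f_{+}$ vanishes.
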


\begin{remark}\label{rem:weak-conv}
In particular, if $\{u_{j}\}_{j\in\mN}\subset \mK_{0}(a)$, then there exists a subsequence, still denoted by $u_{j}$, such that $u_{j}\rightarrow u$ in $L^{2}(\mR^{d})$-strong and in $H^{1}(\mR^{d})$-weak. Consequently, $\mF_{V}(u)\le\liminf_{j\rightarrow\infty}\mF_{V}(u_{j})$. Using \cite[Theorem~3.12]{Rud87RealComplexAnalysis}, one can choose the subsequence $\{u_{j}\}_{j\in\mN}$ so that $u_{j}(x)\rightarrow u(x)$ for a.e. $x\in\mR^{d}$. 
\end{remark}

\begin{proof}[Proof of \Cref{lem:compactness-lemma}\ref{itm:compactness-lemma-part1}]
By the Lax-Milgram theorem \cite[Corollary~5.8]{Bre11PDE}, there exists a unique minimizer $w\in H^{1}(\overline{B_{R}}^{\complement})$ of the following minimizing problem: 
\begin{equation*}
\begin{aligned}
& \text{minimizing} && \frac{1}{2}\int_{\overline{B_{R}}^{\complement}} (\abs{\nabla v}^{2} + Vv^{2})\,\rmd x \\ 
& \text{subject to} && \left\{ v \in H^{1}(\overline{B_{R}}^{\complement}) : v|_{\partial B_{R}} = u \right\}. 
\end{aligned}
\end{equation*}
We define 
\begin{equation*}
\tilde{u} = \left\{\begin{aligned}
& u && \text{in $B_{R}$,} \\ 
& w && \text{in $\overline{B_{R}}^{\complement}$.} 
\end{aligned}\right. 
\end{equation*}
It remains to show that $\tilde{u}\in\mK_{0}$. 

By the comparison principle (\Cref{lem:comparison-principle}), we therefore conclude that $w\ge 0$ in $\overline{B_{R}}^{\complement}$. 
On the other hand, we define $f:=(V-\alpha^{2})w\ge 0$ in $\overline{B_{R}}^{\complement}$. On the other hand, since $u_{0}-w\in H^{1}(\overline{B_{R}}^{\complement})$ satisfies 
\begin{equation*}
(\Delta -\alpha^{2})(u_{0}-w) = (-V+\alpha^{2})w \le 0 \text{ in $\overline{B_{R}}^{\complement}$} ,\quad (u_{0}-w)|_{\partial B_{R}} \ge 0, 
\end{equation*}
by comparison principle (\Cref{lem:comparison-principle}), we conclude that $u_{0}-w\ge 0$ in $\overline{B_{R}}^{\complement}$. 
Consequently, we obtain $0 \le w \le u_{0}$ in $\overline{B_{R}}^{\complement}$ and conclude that $w\in \mK_{0}$. Since $u\in\mK\setminus\mK_{0}$, it follows that $u\not\equiv w$ in $\overline{B_{R}}^{\complement}$. This yields the strict inequality 
\begin{equation*}
\frac{1}{2}\int_{\overline{B_{R}}^{\complement}} (\abs{\nabla w}^{2} + Vw^{2})\,\rmd x < \frac{1}{2}\int_{\overline{B_{R}}^{\complement}} (\abs{\nabla u}^{2} + Vu^{2})\,\rmd x
\end{equation*}
and completes the proof of \Cref{lem:compactness-lemma}\ref{itm:compactness-lemma-part1}. 
\end{proof}

\begin{proof}[Proof of \Cref{lem:compactness-lemma}\ref{itm:compactness-lemma-part2}] 
It is not difficult to verify that $\mK_{0}(a)$ is bounded in $H^{1}(\mR^{d})$. We now show that $\mK_{0}(a)$ is compact in $L^{2}(\mR^{d})$. 

Let $u\in\mK_{0}(a)$. Given any $\epsilon>0$, there exists $R_{\epsilon}>0$ such that $\norm{u}_{L^{2}(B_{R'}^{\complement})}^{2} < \frac{1}{2}\epsilon$ for all $R' \ge R_{\epsilon}$. 
Since the embedding $H^{1}(B_{R'})\hookrightarrow L^{2}(B_{R'})$ is compact (this is the well-known Rellich-Kondrachov theorem), then we can find $u_{1},\cdots,u_{k(\epsilon,R')}\in \mK_{0}(a)$ such that 
\begin{equation*}
\norm{u-u_{j}}_{L^{2}(B_{R'})}^{2} < \frac{1}{2}\epsilon \quad \text{for some $j\in\{1,\cdots,k(\epsilon,R')\}$.} 
\end{equation*}
Now we see that 
\begin{equation*}
\begin{aligned}
& \norm{u-u_{j}}_{L^{2}(\mR^{d})}^{2} = \norm{u-u_{j}}_{L^{2}(B_{R'})}^{2} + \norm{u-u_{j}}_{L^{2}(B_{R'}^{\complement})}^{2} \\ 
& \quad \le \norm{u-u_{j}}_{L^{2}(B_{R'})}^{2} + \frac{1}{2}\norm{u}_{L^{2}(B_{R'}^{\complement})}^{2} + \frac{1}{2}\norm{u_{j}}_{L^{2}(B_{R'}^{\complement})}^{2} \\ 
& \quad \le \frac{3}{4}\epsilon + \frac{1}{2}\norm{u_{0}}_{L^{2}(B_{R'}^{\complement})}^{2} \quad \text{for all $R' \ge R_{\epsilon}$.} 
\end{aligned}
\end{equation*}
Finally, we observe that the above arguments apply to any choice of $R'\ge R_{\epsilon}$ satisfying 
\begin{equation*}
\frac{1}{2}\norm{u_{0}}_{L^{2}(B_{R'}^{\complement})}^{2} \le \frac{1}{4}\epsilon, 
\end{equation*}
which completes the proof of \Cref{lem:compactness-lemma}\ref{itm:compactness-lemma-part2}. 
\end{proof} 

\begin{remark}
By a slight modification of the arguments in the proof of \Cref{lem:compactness-lemma}\ref{itm:compactness-lemma-part2}, we obtain the following result: For each $s>0$, the inclusion $H^{1}(\mR^{d})\cap L^{2}(\mR^{d},\abs{x}^{2s}\,\rmd x) \subset L^{2}(\mR^{d})$ is compact, where $L^{2}(\mR^{d},\abs{x}^{2s}\,\rmd x)$ be the weighted $L^{2}$-space containing functions $f$ for which $\int_{\mR^{d}}\abs{f(x)}^{2}\abs{x}^{2s}\,\rmd x<\infty$. 
\end{remark}

We are now ready to prove \Cref{thm:existence-minimizer}\ref{itm:nontrivial-minimizer}. 

\begin{proof}[Proof of \Cref{thm:existence-minimizer}\ref{itm:nontrivial-minimizer}]
We first prove the existence of a positive constant $C$ and a global minimizer $u_{*}$ of \eqref{eq:main1} in $\mK$ satisfying 
\begin{equation}
u_{*}(x) \le C\abs{x}^{-\frac{d-1}{2}} e^{-\alpha\abs{x}} \quad \text{for all $x\in\mR^{d}$.} \label{eq:decay-UB} 
\end{equation}
Let $\{u_{j}\}_{j\in\mN}\subset\mK$ be any sequence such that $\mF(u_{j})\searrow m_{*}$. Using \Cref{lem:compactness-lemma}\ref{itm:compactness-lemma-part1}, one can construct a sequence $\{v_{j}\}_{j\in\mN}\subset\mK_{0}(a)$, with $a = \mF(u_{1})$, such that $\mF(v_{j})\searrow m_{*}$. 
Consequently, using \Cref{rem:weak-conv}, we see that there exists a subsequence, still denoted by $v_{j}$, such that 
\begin{equation*}
v_{j} \rightarrow u_{*} \quad \text{in $L^{2}(\mR^{d})$-strong and in $H^{1}(\mR^{d})$-weak} 
\end{equation*}
and 
\begin{equation*}
m_{*} = \liminf_{j\rightarrow\infty}\mF(v_{j}) \ge \mF(u_{*}), 
\end{equation*}
so that $\mF(u_{*})=m_{*}$, i.e., $u_{*}$ is a global minimizer of \eqref{eq:main1} in $\mK$. 
Combining this with \eqref{eq:nontrivial}, we obtain \Cref{thm:existence-minimizer}\ref{itm:nontrivial-minimizer}. It is easy to check that $\{u_{*}=1\}$ has positive measure: If not, by \Cref{rem:simple-obervations} we have $m_{*}=0$, which contradicts with \eqref{eq:nontrivial}. 
\end{proof} 

\begin{proof}[Proof of the upper bound in \eqref{eq:decay} and \eqref{eq:decay-gradient}]
The upper bound in \eqref{eq:decay} follows from \Cref{lem:compactness-lemma}\ref{itm:compactness-lemma-part1}. By \Cref{rem:Landis,rem:overpopulated} and the upper bound in \eqref{eq:decay}, it follows that 
\begin{equation*}
\abs{\Delta u_{*}(x)} = \abs{V(x)u_{*}(x)} \le \alpha_{0}^{2}u_{*}(x) \le \alpha_{0}^{2}C(R,d,\alpha)\abs{x}^{-\frac{d-1}{2}}e^{-\alpha\abs{x}} \quad \text{in $\mR^{d}$.}
\end{equation*}
For any $x$ with $\abs{x}=t\ge 2$, we have 
\begin{equation*}
\abs{\Delta u_{*}(y)} \le \alpha_{0}^{2}C(R,d,\alpha)(t-1)^{-\frac{d-1}{2}}e^{-\alpha(t-1)} \quad \text{for all $y\in B_{1}(x)$.}
\end{equation*}
Using Harnack's inequality (\Cref{lem:Harnack}\ref{itm:Harnack-c}) with $M_{3}=\alpha_{0}^{2}C(R,d,\alpha)(t-1)^{-\frac{d-1}{2}}e^{-\alpha(t-1)}$, we obtain 
\begin{equation*}
\begin{aligned}
\abs{\nabla u_{*}(x)} &\le d \dashint_{\partial B_{1}(x)}u_{*}\,\rmd S + \frac{\alpha_{0}^{2}C(R,d,\alpha)(t-1)^{-\frac{d-1}{2}}e^{-\alpha(t-1)}}{d+1} \\
& \le C' (t-1)^{-\frac{d-1}{2}}e^{-\alpha(t-1)} \le e^{\alpha}  C't^{-\frac{d-1}{2}}e^{-t}, 
\end{aligned}
\end{equation*}
since $(t-1)^{-\frac{d-1}{2}}\le t^{-\frac{d-1}{2}}$ for all $t\ge2$. 

On the other hand, since $\abs{\Delta u_{*}}\le \abs{V(x)u_{*}}\le\norm{V}_{L^{\infty}(\mR^{d})}$, we may apply Harnack's inequality (\Cref{lem:Harnack}\ref{itm:Harnack-c}) with $M_{3}=\norm{V}_{L^{\infty}(\mR^{d})}$ to obtain 
\begin{equation*}
\abs{\nabla u_{*}(x)} \le C(R,d,a,\alpha_{0}) \quad \text{for all $x\in\mR^{d}$.} 
\end{equation*}
Combining this estimate with the previous ones yields \eqref{eq:decay-gradient}. 
\end{proof}

We now prove \Cref{thm:existence-minimizer}\ref{item:smallest-minimizer} using \Cref{lem:comparison-functionals}. 

\begin{proof}[Proof of \Cref{thm:existence-minimizer}\ref{item:smallest-minimizer}]
We now establish the existence of the smallest global minimizers as in \cite[Proposition~1.8]{GS96FreeBoundaryPotential}, similar arguments also work for the largest one. Since $H^{1}(\mR^{d})$ is separable, there is a finite or countably infinite sequence $\{u_{*}^{(j)}\}\subset\mK_{0}(m_{*})$ of the global minimizer of \eqref{eq:main1} that is dense in the set of all minimizer of \eqref{eq:main1}\footnote{Any subset of a separable metric space is also separable \cite[Proposition~3.25]{Bre11PDE}.}.  Define $\tilde{u}_{*}^{(1)}:=u_{*}^{(1)}$ and, inductively for $j\ge 2$, $\tilde{u}_{*}^{(j)}:=\min\{\tilde{u}_{*}^{(j-1)},u_{*}^{(j)}\}$, so that $\tilde{u}_{*}^{(1)}\ge\tilde{u}_{*}^{(2)}\ge\cdots$. From \Cref{lem:comparison-functionals}\ref{itm:pairwise-comparison} we know that $\{\tilde{u}_{*}^{(j)}\}\subset\mK_{0}(m_{*})$ are all global minimizer of \eqref{eq:main1} in $\mK$ satisfying \eqref{eq:decay-UB}. Therefore, using \Cref{rem:weak-conv}, we see that $\underline{u}_{*}:=\inf_{j}\tilde{u}_{*}^{(j)}$ is also a minimizer of \eqref{eq:main1} in $\mK$ and satisfying $u_{*}\ge \underline{u}_{*}$ for every minimizer $u_{*}$ of \eqref{eq:main1} in $\mK$. 
\end{proof}

\begin{proof}[Proof of \Cref{thm:existence-minimizer}\ref{itm:nonempty-contact-set}] 
It remains to prove the lower bound in \eqref{eq:decay}. One can appropriately choose $a_{0}=a_{0}(R,d,\alpha_{0})>0$ such that 
\begin{equation*}
u_{0}^{\rm rad}(x) := a_{0}(\alpha_{0}\abs{x})^{-\frac{d-2}{2}}K_{\frac{d-2}{2}}(\alpha_{0}\abs{x}) \quad \text{for all $x\in \overline{B_{2R}}^{\complement}$}
\end{equation*}
is the unique radially symmetric solution of 
\begin{equation*}
(\Delta-\alpha_{0}^{2})u_{0}^{\rm rad}=0 \text{ in $B_{2R}^{\complement}$} ,\quad \left. u_{0}^{\rm rad} \right|_{\partial B_{2R}} = \min_{\partial B_{2R}}\underline{u}_{*} ,\quad u_{0}^{\rm rad} \in H^{1}(\overline{B_{2R}}^{\complement}), 
\end{equation*}
where $\underline{u}_{*}$ is the smallest global minimizer of \eqref{eq:main1} in $\mK$, as described in \Cref{thm:existence-minimizer}\ref{item:smallest-minimizer}. 
Using \eqref{eq:main-Euler-Lagrange} and the comparison principle (\Cref{lem:comparison-principle}), we see that $\underline{u}_{*}>0$ in $\overline{B_{R}}^{\complement}$. Therefore, one sees that $\left. u_{0}^{\rm rad} \right|_{\partial B_{2R}} = \min_{\partial B_{2R}}\underline{u}_{*} > 0$. Since $\underline{u}_{*}-u_{0}^{\rm rad} \in H^{1}(\overline{B_{2R}}^{\complement})$ satisfies 
\begin{equation*}
(\Delta-\alpha_{0}^{2})(\underline{u}_{*}-u_{0}^{\rm rad})=(V-\alpha_{0}^{2})\underline{u}_{*}\le 0 \text{ in $\overline{B_{2R}}^{\complement}$} , \quad (\underline{u}_{*}-u_{0}^{\rm rad})|_{\partial B_{2R}}\ge 0, 
\end{equation*}
we conclude the lower bound in \eqref{eq:decay} by the comparison principle (\Cref{lem:comparison-principle}). 

Finally, it remains to show that $\{u_{*}=1\}\neq\emptyset$ for every minimizers $u_{*}$ of \eqref{eq:main1} in $\mK$. Suppose, for contradiction, that there exists a global minimizer $u_{*}$ of \eqref{eq:main1} in $\mK$ satisfying \eqref{eq:decay}, but such that $\{u_{*}=1\}=\emptyset$. Then 
\begin{equation*}
\max_{\mR^{d}}u_{*} < 1. 
\end{equation*}
Define $\tilde{u}_{*}:=(\max_{\mR^{d}}u_{*})^{-1}u_{*}$. Clearly $\tilde{u}_{*}\in\mK$. Moreover,  
\begin{equation*}
\mF(\tilde{u}_{*}) = (\max_{\mR^{d}}u_{*})^{-2} \mF(u_{*}) < \mF(u_{*}), 
\end{equation*}
which contradicts the minimality of $u_{*}$. 
\end{proof}


Having completed the proof of the existence of minimizers, we now prove \Cref{thm:positivity} concerning the positivity of global minimizers, using Hopf's lemma (\Cref{cor:Hopf}).

\begin{proof}[Proof of \Cref{thm:positivity}] 
Using \eqref{eq:main-Euler-Lagrange}, we then have
\begin{equation*}
\Delta u_{*} \le Vu_{*} +V_-u_*=V_+u_*= 0 \quad \text{in $G_{1}$.} 
\end{equation*} 
By the minimum principle for superharmonic functions, and since $u_{*}$ does not attain a local minimum in $G_{1}$, it follows that $u_{*}>0$ for all $x\in G_{1}$. 

We now prove that $u_{*}>0$ on $\partial G_{1}$. Suppose, for the sake of contradiction, that there exists $x_{0}\in\partial G_{1}$ such that $u(x_{0})=0$. By the interior sphere condition at $x_{0}\in\partial G_{1}$, there exist $a\in G_{1}$ and $r>0$ such that $B_{r}(a)\subset G_{1}$ and $\partial B_{r}(a)\cap\partial G_{1}=\{x_{0}\}$. Let $\nu$ denote the unit vector that points from $a$ to $x_{0}$. Applying Hopf's lemma (\Cref{cor:Hopf}), we obtain $\nu\cdot\nabla u_{*}(x_{0})<0$. Since $u_{*}\in C^{1}(\mR^{d})$, for a sufficiently small $\epsilon>0$ we then have $u_{*}(x_{0}+\epsilon\nu)<0$, which contradicts $u_{*}\ge 0$ in $\mR^{d}$. 

Finally, applying the comparison principle (\Cref{lem:comparison-principle}) for the operator $\Delta-V$ in $\overline{G_{1}}^{\complement}$, and noting that $u_{*}$ attains neither a local minimum nor maximum there, we conclude that $0<u_{*}<1$ in $\overline{G_{1}}^{\complement}$. 
\end{proof}

\section{Radially symmetric potential: uniqueness and analytic example\label{sec:analytic-examples}}

We first present a proof of \Cref{thm:radially-symmetric}. 

\begin{proof}[Proof of \Cref{thm:radially-symmetric}]
Let $U$ be any real-valued orthogonal matrix. Since $\mF(u)=\mF(u\circ U)$ for all $u\in\mK$ and $U^{\intercal}=U^{-1}$ is also orthogonal, we have  
\begin{equation*}
\underline{u}_{*}(x)\le \underline{u}_{*}(Ux) \le \underline{u}_{*}(U^{\intercal}Ux)=\overline{u}_{*}(x) \quad \text{for all $x\in\mR^{d}$,} 
\end{equation*}
hence $\underline{u}_{*}(x)= \underline{u}_{*}(Ux)$ for all $x\in\mR^{d}$, which concludes the radially symmetry of $\underline{u}_{*}$. A similar argument applies to $\overline{u}_{*}$ as well. 

We now consider the case when there exists $R_{0}\in(0,R]$ such that $V<0$ a.e. in $B_{R_{0}}$ and $V\ge 0$ a.e. in $B_{R_{0}}^{\complement}$. 
By \Cref{thm:existence-minimizer}\ref{itm:nontrivial-minimizer}, the set $\{u_{*}=1\}$ has positive measure. 
Define 
\begin{equation*}
R_{*}:=\max\{r:u_{*}(re_{1})=1\}>0 ,\quad r_{*}:=\min\{r:u_{*}(re_{1})=1\}. 
\end{equation*}
We show that $R_{*}<R_{0}$, equivalently, $\{u_{*}=1\}\subset B_{R_{0}}$: Otherwise, if $R_{*}\ge R_{0}$, then $V\ge 0$ a.e. in $B_{R_{*}}^{\complement}$. 
Since $u_{*}\in C^{1}(\mR^{d})$,
$\partial_{r}u_{*}(x_0)=0$ 
for all $x_{0}\in\partial B_{R_{*}}$, and $\Delta u_{*} = Vu_{*} \ge 0$ in $\overline{B_{R_{*}}}^{\complement}$, Hopf's lemma (\Cref{cor:Hopf}) implies that 
$\partial_{r}u_{*}(x_0)
<0$ for all $x_{0}\in\partial B_{R_{*}}$. This is a contradiction.

We show that $r_{*}=0$: Otherwise, if $r_{*}>0$, then 
\begin{equation*}
0 > \int_{B_{r_{*}}}Vu_{*}\,\rmd x = \int_{B_{r_{*}}}\Delta u_{*}\,\rmd x = \int_{\partial B_{r_{*}}}\partial_{r}u_{*}\,\rmd S = 0, 
\end{equation*}
which is a contradiction. 

We show that $\{u_{*}=1\}=\overline{B_{R_{*}}}\subset B_{R_{0}}$: Otherwise, there exist $r_{0}$ and $r_{1}$ such that $0\le r_{0}<r_{1}\le R_{*}$ such that 
\begin{equation*}
u_{*}|_{\partial B_{r_{0}}} = u_{*}|_{\partial B_{r_{1}}} = 1 ,\quad u_{*}|_{B_{r_{1}}\setminus\overline{B_{r_{0}}}}<1. 
\end{equation*} 
Since $r_{0}\le R_{*} <R_{0}$, we compute  
\begin{equation*}
0 > \int_{B_{r_{1}}\setminus\overline{B_{r_{0}}}}Vu_{*}\,\rmd x = \int_{\partial B_{r_{1}}} \partial_{r}u_{*}\,\rmd S - \int_{\partial B_{r_{0}}} \partial_{r}u_{*}\,\rmd S = 0, 
\end{equation*}
which is a contradiction. 

It is easy to see that $\partial_{r}u_{*}< 0$ in $B_{R_{0}}\setminus\overline{B_{R_{*}}}$ as well: For any $\tilde{R}\in(R_{*},R_{0}]$, since $u_{*}>0$ in $\mR^{d}$, we have 
\begin{equation*}
\begin{aligned}
0 &> \int_{B_{\tilde{R}}\setminus\overline{B_{R_{*}}}}Vu_{*}\,\rmd x = \int_{B_{\tilde{R}}\setminus\overline{B_{R_{*}}}} \Delta u_{*}\,\rmd x \\
&= \int_{\partial(B_{\tilde{R}}\setminus\overline{B_{R_{*}}})}\partial_{\nu}u_{*}\,\rmd S = \int_{\partial B_{\tilde{R}}}\partial_{r}u_{*}\,\rmd S = d\omega_{d}\tilde{R}^{d-1} \partial_{r}u_{*}|_{\partial B_{\tilde{R}}}. 
\end{aligned}
\end{equation*}
Since $\underline{u}_{*}$ and $\overline{u}_{*}$ are radially symmetric, it follows from \Cref{rem:simple-obervations} that 
\begin{equation*}
\{\underline{u}_{*}=1\}=\overline{B_{\underline{R}_{*}}},\quad \{\overline{u}_{*}=1\}=\overline{B_{\overline{R}_{*}}}, \quad \int_{B_{\underline{R}_{*}}}V\,\rmd x = \mF_{V}(\underline{u}_{*}) = \mF_{V}(\overline{u}_{*}) = \int_{B_{\overline{R}_{*}}}V\,\rmd x
\end{equation*}
for some $\underline{R}_{*},\overline{R}_{*}\in (0,R_{0})$. Since the mapping 
\begin{equation*}
r\in(0,R_{0}] \mapsto \int_{B_{r}} V\,\rmd x \text{ is strictly decreasing,}
\end{equation*}
therefore $\overline{R}_{*}=\underline{R}_{*}$. 
By the uniqueness of the ODE, we conclude that $\overline{u}_{*}=\underline{u}_{*}$ in $\mR^{d}$, equivalently, the global minimizer is unique. 
\end{proof}

We now present an example for the case in which $V$ is locally constant.

\begin{proposition}\label{lem:radially-symmetric-minimizer} 
Let $R>0$. We consider the potential $V_{*}\in L^{\infty}(\mR^{d})$ given by  
\begin{equation*}
V_{*}(x) = \alpha^{2} \text{ for all $x\in B_{R}^{\complement}$} ,\quad V_{*}(x) = -\kappa^{2} \text{ for all $x\in B_{R}$} 
\end{equation*}
for any $\kappa^{2} > \lambda^{*}(B_{R})$. Then there exists a unique global minimizer $u_{*}^{\rm rad}$ of $\mF_{*} \equiv \mF_{V_{*}}$ given by 
\begin{equation}
u_{*}^{\rm rad}(x) = \left\{\begin{aligned}
& 1 && \text{for all $x\in B_{R_{*}}$,} \\ 
& a (\kappa\abs{x})^{-\frac{d-2}{2}}J_{\frac{d-2}{2}}(\kappa\abs{x}) + b (\kappa\abs{x})^{-\frac{d-2}{2}}Y_{\frac{d-2}{2}}(\kappa\abs{x}) && \text{for all $x\in B_{R}\setminus B_{R_{*}}$,} \\ 
& c (\alpha\abs{x})^{-\frac{d-2}{2}}K_{\frac{d-2}{2}}(\alpha\abs{x}) && \text{for all $x\in B_{R}^{\complement}$,} 
\end{aligned}\right. \label{eq:global-minimizer-result} 
\end{equation}
with 
\begin{equation*}
\begin{aligned} 
a &= -\frac{\pi}{2}(\kappa R_{*})^{\frac{d}{2}}Y_{\frac{d}{2}}(\kappa R_{*}), \quad b = \frac{\pi}{2}(\kappa R_{*})^{\frac{d}{2}}J_{\frac{d}{2}}(\kappa R_{*}), \\ 
c &= \frac{1}{(\alpha R)^{-\frac{d-2}{2}}K_{\frac{d-2}{2}}(\alpha R)} \left( \begin{aligned} 
&-\frac{\pi}{2}(\kappa R_{*})^{\frac{d}{2}}Y_{\frac{d}{2}}(\kappa R_{*})(\kappa R)^{-\frac{d-2}{2}}J_{\frac{d-2}{2}}(\kappa R) \\
& \quad+ \frac{\pi}{2}(\kappa R_{*})^{\frac{d}{2}}J_{\frac{d}{2}}(\kappa R_{*}) (\kappa R)^{-\frac{d-2}{2}}Y_{\frac{d-2}{2}}(\kappa R) 
\end{aligned}\right), 
\end{aligned} 
\end{equation*}
where $R_{*}$ is the largest zero in $(0,R)$ of 
\begin{equation}
-Y_{\frac{d}{2}}(\kappa R_{*}) \left( \frac{J_{\frac{d-2}{2}}(\kappa R)}{K_{\frac{d-2}{2}}(\alpha R)} - \frac{\kappa J_{\frac{d}{2}}(\kappa R)}{\alpha K_{\frac{d}{2}}(\alpha R)} \right) = J_{\frac{d}{2}}(\kappa R_{*}) \left( \frac{\kappa Y_{\frac{d}{2}}(\kappa R)}{\alpha K_{\frac{d}{2}}(\alpha R)} - \frac{Y_{\frac{d-2}{2}}(\kappa R)}{K_{\frac{d-2}{2}}(\alpha R)} \right). \label{eq:solution-R-star}
\end{equation}  
In addition, it satisfies $\mF_{*}(u_{*}^{\rm rad})=-\kappa^{2}\omega_{d}R_{*}^{d}$. 
\end{proposition}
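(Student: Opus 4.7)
The plan is to invoke \Cref{thm:radially-symmetric} to reduce the proposition to an explicit radial free-boundary ODE problem, and then to solve that ODE on each of the two regions $(R_*,R)$ and $(R,\infty)$ in terms of Bessel functions. First I verify the hypotheses of \Cref{thm:radially-symmetric}: taking $G=B_R$, $\beta=\kappa$, and $R_0=R$, one has $V_*=-\kappa^2<0$ a.e.\ in $B_{R_0}$ and $V_*=\alpha^2\ge 0$ a.e.\ in $B_{R_0}^\complement$, and the trapped condition $\kappa^2>\lambda^*(B_R)$ is assumed. This immediately produces a unique radially symmetric global minimizer $u_*^{\rm rad}(x)=f(|x|)$ together with some $R_*\in(0,R)$ such that $\{u_*^{\rm rad}=1\}=\overline{B_{R_*}}$ and $\partial_r u_*^{\rm rad}<0$ in $\overline{B_{R_*}}^\complement$.

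Next I solve \eqref{eq:main-Euler-Lagrange} in each region. On $(R_*,R)$, where $V_*=-\kappa^2$, the radial form of $\Delta f+\kappa^2 f=0$ is $f''+\tfrac{d-1}{r}f'+\kappa^2 f=0$, which the substitution $g(\kappa r)=(\kappa r)^{(d-2)/2}f(r)$ reduces to Bessel's equation of order $(d-2)/2$. This gives
\begin{equation*}
f(r)=a(\kappa r)^{-(d-2)/2}J_{(d-2)/2}(\kappa r)+b(\kappa r)^{-(d-2)/2}Y_{(d-2)/2}(\kappa r).
\end{equation*}
On $(R,\infty)$, where $V_*=\alpha^2$, the same reduction leads to the modified Bessel equation; the $H^1$-integrability at infinity rules out the $I_{(d-2)/2}$ branch (which grows exponentially), leaving $f(r)=c(\alpha r)^{-(d-2)/2}K_{(d-2)/2}(\alpha r)$.

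The four unknowns $a,b,c,R_*$ are then pinned down by four matching conditions. At $r=R_*$, the $C^{1,1}$-regularity across the free boundary (\Cref{thm:continuity}\ref{itm:no-FB1}) gives $f(R_*^+)=1$ and $f'(R_*^+)=0$. Using the standard identity (\Cref{sec:Bessel})
\begin{equation*}
\tfrac{d}{dr}\bigl[(\kappa r)^{-(d-2)/2}\mathscr{C}_{(d-2)/2}(\kappa r)\bigr]=-\kappa(\kappa r)^{-(d-2)/2}\mathscr{C}_{d/2}(\kappa r),\qquad \mathscr{C}\in\{J,Y\},
\end{equation*}
the vanishing-derivative condition becomes $aJ_{d/2}(\kappa R_*)+bY_{d/2}(\kappa R_*)=0$; combining with $f(R_*)=1$ and the Wronskian identity $J_\nu(z)Y_{\nu+1}(z)-J_{\nu+1}(z)Y_\nu(z)=-\tfrac{2}{\pi z}$ (\Cref{sec:Bessel}) yields the stated closed forms for $a$ and $b$. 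At $r=R$, the $W^{2,p}_{\rm loc}$-regularity from \Cref{thm:Lewy-Stampacchia} ensures continuity of both $f$ and $f'$; continuity of $f$ gives the claimed formula for $c$, and continuity of $f'$, after dividing by the common factors and using the analogous identity for $K_{(d-2)/2}$, yields precisely the transcendental equation \eqref{eq:solution-R-star} for $R_*$.

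The existence of $R_*\in(0,R)$ solving \eqref{eq:solution-R-star} is automatic from \Cref{thm:radially-symmetric}; its identification as the \emph{largest} such zero follows from the strict monotonicity $\partial_r u_*^{\rm rad}<0$ on $\overline{B_{R_*}}^\complement$ (any larger zero in $(R_*,R)$ would force $f'$ to vanish again and hence change sign, contradicting that monotonicity). Finally, since $\{u_*^{\rm rad}=1\}=\overline{B_{R_*}}$ with $\mL^d(\partial B_{R_*})=0$ and $V_*=-\kappa^2$ on this set, \Cref{rem:simple-obervations} gives
\begin{equation*}
\mF_*(u_*^{\rm rad})=\int_{B_{R_*}}V_*\,\rmd x=-\kappa^2\omega_d R_*^d.
\end{equation*}
The main obstacle is purely computational: the careful bookkeeping of Bessel identities required to reduce the four matching relations to the stated explicit formulas for $a,b,c$ and to the transcendental equation \eqref{eq:solution-R-star}, together with the justification of the ``largest zero'' characterization via \Cref{thm:radially-symmetric}.
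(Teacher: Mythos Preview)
Your proposal is correct and follows essentially the same approach as the paper: invoke \Cref{thm:radially-symmetric} to obtain the unique radially symmetric minimizer with contact set $\overline{B_{R_*}}$, solve the radial Euler--Lagrange ODE on each annulus in terms of Bessel functions, match values and derivatives at $r=R_*$ and $r=R$ (using the Wronskian identity) to obtain $a,b,c$ and \eqref{eq:solution-R-star}, and conclude the energy formula via \Cref{rem:simple-obervations}. Your justification that $R_*$ is the \emph{largest} zero via the strict monotonicity $\partial_r u_*^{\rm rad}<0$ from \Cref{thm:radially-symmetric} is a slight streamlining of the paper's integral argument (the paper also includes a separate positivity verification that is in fact redundant given \Cref{thm:positivity}); to make your argument airtight, just note explicitly that on $(R_*,\infty)$ the minimizer is a scalar multiple of the fixed function $v$ of \Cref{lem:general-solution}, so that zeros of \eqref{eq:solution-R-star} in $(R_*,R)$ coincide with zeros of $(u_*^{\rm rad})'$ there.
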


Prior to the proof of \Cref{lem:radially-symmetric-minimizer}, we establish the following lemma. 

\begin{lemma}\label{lem:general-solution} 
Let 
\begin{equation*}
v(r):=\left\{\begin{aligned}
& a_{0}(\kappa r)^{-\frac{d-2}{2}}J_{\frac{d-2}{2}}(\kappa r) + b_{0}(\kappa r)^{-\frac{d-2}{2}}Y_{\frac{d-2}{2}}(\kappa r) && \text{for $0<r<R$,} \\ 
& (\alpha r)^{-\frac{d-2}{2}}K_{\frac{d-2}{2}}(\alpha r) && \text{for $r>R$.} 
\end{aligned}\right. 
\end{equation*}
Then $v\in C^{1}((0,\infty))$ if and only if 
\begin{equation*}
\left(\begin{array}{c}a_{0} \\ b_{0} \end{array}\right) = -(\kappa R_{*})^{\frac{d-2}{2}} \frac{\pi\kappa R}{2} \left( \begin{array}{cc} Y_{\frac{d}{2}}(\kappa R) & -Y_{\frac{d-2}{2}}(\kappa R) \\ -J_{\frac{d}{2}}(\kappa R) & J_{\frac{d-2}{2}}(\kappa R) \end{array} \right)\left(\begin{array}{c}(\alpha R)^{-\frac{d-2}{2}}K_{\frac{d-2}{2}}(\alpha R) \\ \alpha (\alpha R)^{-\frac{d-2}{2}}K_{\frac{d}{2}}(\alpha R) \end{array}\right). 
\end{equation*}
\end{lemma}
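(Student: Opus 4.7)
The plan is to reduce the question of $C^{1}((0,\infty))$-regularity to a $C^{1}$-matching condition at the single interface $r=R$, since both pieces of $v$ are manifestly smooth on their respective open intervals $(0,R)$ and $(R,\infty)$. Thus $v\in C^{1}((0,\infty))$ if and only if $v(R^{-})=v(R^{+})$ and $v'(R^{-})=v'(R^{+})$, a pair of linear equations in the two unknowns $(a_{0},b_{0})$. The lemma then reduces to solving this $2\times 2$ linear system explicitly.

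First I would use the standard Bessel derivative identities
\begin{equation*}
\frac{d}{dz}\bigl[z^{-\upsilon}\mathcal{Z}_{\upsilon}(z)\bigr] = -z^{-\upsilon}\mathcal{Z}_{\upsilon+1}(z) \qquad \text{for } \mathcal{Z}\in\{J,Y,K\}
\end{equation*}
(applied with $\upsilon=(d-2)/2$, combined with the chain rules $d/dr=\kappa\,d/dz$ and $d/dr=\alpha\,d/dz$ on the inner and outer pieces, respectively) to obtain closed-form expressions for $v'(R^{\pm})$. Substituting these into the two matching conditions and clearing a common factor of $(\kappa R)^{-\upsilon}$ reduces the system to
\begin{equation*}
\begin{pmatrix} J_{\upsilon}(\kappa R) & Y_{\upsilon}(\kappa R) \\ J_{\upsilon+1}(\kappa R) & Y_{\upsilon+1}(\kappa R) \end{pmatrix}\begin{pmatrix} a_{0} \\ b_{0} \end{pmatrix} = (\kappa R)^{\upsilon}(\alpha R)^{-\upsilon}\begin{pmatrix} K_{\upsilon}(\alpha R) \\ (\alpha/\kappa)K_{\upsilon+1}(\alpha R) \end{pmatrix}.
\end{equation*}

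Next I would invert the coefficient matrix using the Wronskian identity
\begin{equation*}
J_{\upsilon}(z)Y_{\upsilon+1}(z) - Y_{\upsilon}(z)J_{\upsilon+1}(z) = -\frac{2}{\pi z}
\end{equation*}
from \Cref{sec:Bessel}, so at $z=\kappa R$ the determinant equals $-2/(\pi\kappa R)$ and the inverse is $-\tfrac{\pi\kappa R}{2}$ times the classical adjugate. Applying this inverse to the right-hand vector and regrouping the scalar factors produces the claimed explicit formula for $(a_{0},b_{0})$. The if-and-only-if statement is then immediate from the non-vanishing of the Wronskian: the matching matrix is invertible, so the matching equations have a unique solution, which is precisely the displayed formula; conversely, any $(a_{0},b_{0})$ produced by the formula satisfies the two matching equations by construction, hence $v\in C^{1}((0,\infty))$.

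There is no substantive obstacle; the argument is a $2\times 2$ linear-algebra computation. The only place where care is needed is bookkeeping of the various powers $(\kappa R)^{\pm\upsilon}$ and $(\alpha R)^{\pm\upsilon}$ and the prefactors $\kappa,\alpha$ introduced by the chain rule, so that the final form precisely matches the coefficient matrix displayed in the statement of the lemma.
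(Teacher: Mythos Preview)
Your proposal is correct and follows essentially the same route as the paper: reduce $C^{1}$-regularity to the two matching conditions at $r=R$, use the derivative formulas $\frac{d}{dz}[z^{-\upsilon}\mathcal{Z}_{\upsilon}(z)]=-z^{-\upsilon}\mathcal{Z}_{\upsilon+1}(z)$ to set up the $2\times2$ linear system, and invert via the Wronskian identity \eqref{eq:Wronskian-Bessel}. (Note that the factor $(\kappa R_{*})^{\frac{d-2}{2}}$ in the displayed formula of the statement is a typo for $(\kappa R)^{\frac{d-2}{2}}$, as the paper's own proof confirms.)
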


\begin{proof}[Proof of \Cref{lem:general-solution}]
Note that $v\in C^{1}((0,\infty))$ if and only if $(a_{0},b_{0})$ solves 
\begin{equation*}
\left\{\begin{aligned}
& a_{0} (\kappa R)^{-\frac{d-2}{2}}J_{\frac{d-2}{2}}(\kappa R) + b_{0} (\kappa R)^{-\frac{d-2}{2}}Y_{\frac{d-2}{2}}(\kappa R) = (\alpha R)^{-\frac{d-2}{2}}K_{\frac{d-2}{2}}(\alpha R), \\ 
& a_{0} \kappa (\kappa R)^{-\frac{d-2}{2}}J_{\frac{d}{2}}(\kappa R) + b_{0} \kappa (\kappa R)^{-\frac{d-2}{2}}Y_{\frac{d}{2}}(\kappa R) =  \alpha (\alpha R)^{-\frac{d-2}{2}}K_{\frac{d}{2}}(\alpha R), 
\end{aligned}\right.
\end{equation*}
equivalently, 
\begin{equation*}
\left(\begin{array}{c}a_{0} \\ b_{0} \end{array}\right) = (\kappa R)^{\frac{d-2}{2}} \left( \begin{array}{cc} J_{\frac{d-2}{2}}(\kappa R) & Y_{\frac{d-2}{2}}(\kappa R) \\ J_{\frac{d}{2}}(\kappa R) & Y_{\frac{d}{2}}(\kappa R) \end{array} \right)^{-1} \left(\begin{array}{c}(\alpha R)^{-\frac{d-2}{2}}K_{\frac{d-2}{2}}(\alpha R) \\ \alpha (\alpha R)^{-\frac{d-2}{2}}K_{\frac{d}{2}}(\alpha R) \end{array}\right). 
\end{equation*}
From \eqref{eq:Wronskian-Bessel} we obtain 
\begin{equation*}
\det \left( \begin{array}{cc} J_{\frac{d-2}{2}}(\kappa R) & Y_{\frac{d-2}{2}}(\kappa R) \\ J_{\frac{d}{2}}(\kappa R) & Y_{\frac{d}{2}}(\kappa R) \end{array} \right) = -\frac{2}{\pi\kappa R}. 
\end{equation*}
This completes the proof of the lemma. 
\end{proof}

We now proceed to the proof of \Cref{lem:radially-symmetric-minimizer}. 

\begin{proof}[Proof of \Cref{lem:radially-symmetric-minimizer}] 
By \Cref{thm:positivity,thm:radially-symmetric}, there exists a unique positive global minimizer $u_{*}^{\rm rad}\in C^{1}(\mR^{d})$ of $\mF_{*}=\mF_{V_{*}}$, which is radially symmetric. By \Cref{thm:radially-symmetric}, it follows that 
\begin{equation}
\left\{\begin{aligned}
& u_{*}^{\rm rad} = 1 && \text{in $B_{R_{*}}$,} \\ 
& \Delta u_{*}^{\rm rad} + \kappa^{2}u_{*}^{\rm rad} = 0 && \text{in $B_{R}\setminus \overline{B_{R_{*}}}$,} \\ 
& \Delta u_{*}^{\rm rad} - \alpha^{2}u_{*}^{\rm rad} = 0 && \text{in $\overline{B_{R_{*}}}^{\complement}$,} \\ 
& \lim_{\lvert x \rvert \to \infty} u_{*}^{\rm rad}(x) = 0 \\ 
& u_{*}^{\rm rad} > 0 && \text{in $\mR^{d}$}
\end{aligned}\right. 
\end{equation}
for some $0< R_{*} < R$ satisfying $v'(R_{*})=0$. 
It is easy to see that $R_{*}$ is the maximum value which solves $v'(R_{*})=0$ in $(0,R)$, equivalently, $R_{*}$ is the largest zero in $(0,R)$ of \eqref{eq:solution-R-star}: If not, there exists $\tilde{R}_{*} \in (R_{*},R)$ with $v'(\tilde{R}_{*})=0$, then 
\begin{equation*}
0 < \int_{B_{\tilde{R}_{*}}\setminus\overline{B_{R_{*}}}} u_{*}^{\rm rad}\,\rmd x = -\frac{1}{\beta^{2}}\int_{B_{\tilde{R}_{*}}\setminus\overline{B_{R_{*}}}}\Delta u_{*}^{\rm rad}\,\rmd x = -\frac{1}{\beta^{2}}\int_{\partial(B_{\tilde{R}_{*}}\setminus\overline{B_{R_{*}}})} \partial_{\nu}u_{*}^{\rm rad}\,\rmd S = 0, 
\end{equation*}
which leads a contradiction. 

We now check that $u_{*}^{\rm rad} > 0$ in $\mR^{d}$. It suffices to check that the derivative of the mapping  
\begin{equation}
t \mapsto -Y_{\frac{d}{2}}(\kappa R_{*}) (\kappa t)^{\frac{d-2}{2}}J_{\frac{d-2}{2}}(\kappa t) + J_{\frac{d}{2}}(\kappa R_{*}) (\kappa t)^{\frac{d-2}{2}}Y_{\frac{d-2}{2}}(\kappa t) \label{eq:to-do} 
\end{equation} 
is negative for all $t\in(R_{*},R)$. Otherwise, then one can find $t\in(R_{*},R)$ such that 
\begin{equation}
-Y_{\frac{d}{2}}(\kappa R_{*}) J_{\frac{d}{2}}(\kappa t) + J_{\frac{d}{2}}(\kappa R_{*}) Y_{\frac{d}{2}}(\kappa t) = 0. \label{eq:contrary-assumption}
\end{equation}
Since $R_{*}$ is the greatest solution of \eqref{eq:solution-R-star} within $(0, R)$, then each $t \in (R_{*},R)$ is not a solution to \eqref{eq:solution-R-star}, following 
\begin{equation}
- Y_{\frac{d}{2}}(\kappa t) \left( \frac{J_{\frac{d-2}{2}}(\kappa R)}{K_{\frac{d-2}{2}}(\alpha R)} - \frac{\kappa J_{\frac{d}{2}}(\kappa R)}{\alpha K_{\frac{d}{2}}(\alpha R)} \right) \neq  J_{\frac{d}{2}}(\kappa t) \left( \frac{\kappa Y_{\frac{d}{2}}(\kappa R)}{\alpha K_{\frac{d}{2}}(\alpha R)} - \frac{Y_{\frac{d-2}{2}}(\kappa R)}{K_{\frac{d-2}{2}}(\alpha R)} \right). \label{eq:observation}
\end{equation}

\medskip 

\noindent \emph{Case 1.} If $Y_{\frac{d}{2}}(\kappa R_{*})=0$, then $J_{\frac{d}{2}}(\kappa R_{*})\neq 0$ and $Y_{\frac{d}{2}}(\kappa t)=0$. From \eqref{eq:observation} we see that 
\begin{equation*}
J_{\frac{d}{2}}(\kappa t) \left( \frac{\kappa Y_{\frac{d}{2}}(\kappa R)}{\alpha K_{\frac{d}{2}}(\alpha R)} - \frac{Y_{\frac{d-2}{2}}(\kappa R)}{K_{\frac{d-2}{2}}(\alpha R)} \right) \neq 0. 
\end{equation*}
Therefore we see that $J_{\frac{d}{2}}(\kappa t)\neq 0$ and 
\begin{equation*}
\frac{\kappa Y_{\frac{d}{2}}(\kappa R)}{\alpha K_{\frac{d}{2}}(\alpha R)} - \frac{Y_{\frac{d-2}{2}}(\kappa R)}{K_{\frac{d-2}{2}}(\alpha R)} \neq 0. 
\end{equation*}
Now 
\begin{equation*}
J_{\frac{d}{2}}(\kappa R_{*})\left( \frac{\kappa Y_{\frac{d}{2}}(\kappa R)}{\alpha K_{\frac{d}{2}}(\alpha R)} - \frac{Y_{\frac{d-2}{2}}(\kappa R)}{K_{\frac{d-2}{2}}(\alpha R)} \right) \neq 0 = - Y_{\frac{d}{2}}(\kappa R_{*}) \left( \frac{J_{\frac{d-2}{2}}(\kappa R)}{K_{\frac{d-2}{2}}(\alpha R)} - \frac{\kappa J_{\frac{d}{2}}(\kappa R)}{\alpha K_{\frac{d}{2}}(\alpha R)} \right),  
\end{equation*}
which contradicts with \eqref{eq:solution-R-star}.  

\medskip 

\noindent \emph{Case 2.} If $Y_{\frac{d}{2}}(\kappa R_{*})\neq 0$, then\footnote{Otherwise, if $Y_{\frac{d}{2}}(\kappa t)= 0$, from \eqref{eq:contrary-assumption} we will obtain $J_{\frac{d}{2}}(\kappa t) = 0$, this is a contradiction.} $Y_{\frac{d}{2}}(\kappa t)\neq 0$. Now we see that 
\begin{equation*}
\begin{aligned} 
& - Y_{\frac{d}{2}}(\kappa R_{*}) Y_{\frac{d}{2}}(\kappa t) \left( \frac{J_{\frac{d-2}{2}}(\kappa R)}{K_{\frac{d-2}{2}}(\alpha R)} - \frac{\kappa J_{\frac{d}{2}}(\kappa R)}{\alpha K_{\frac{d}{2}}(\alpha R)} \right) \\
& \quad \neq Y_{\frac{d}{2}}(\kappa R_{*}) J_{\frac{d}{2}}(\kappa t) \left( \frac{\kappa Y_{\frac{d}{2}}(\kappa R)}{\alpha K_{\frac{d}{2}}(\alpha R)} - \frac{Y_{\frac{d-2}{2}}(\kappa R)}{K_{\frac{d-2}{2}}(\alpha R)} \right) \\
& \quad \equiv Y_{\frac{d}{2}}(\kappa t) J_{\frac{d}{2}}(\kappa R_{*}) \left( \frac{\kappa Y_{\frac{d}{2}}(\kappa R)}{\alpha K_{\frac{d}{2}}(\alpha R)} - \frac{Y_{\frac{d-2}{2}}(\kappa R)}{K_{\frac{d-2}{2}}(\alpha R)} \right), 
\end{aligned}
\end{equation*}
which gives 
\begin{equation*}
- Y_{\frac{d}{2}}(\kappa R_{*}) \left( \frac{J_{\frac{d-2}{2}}(\kappa R)}{K_{\frac{d-2}{2}}(\alpha R)} - \frac{\kappa J_{\frac{d}{2}}(\kappa R)}{\alpha K_{\frac{d}{2}}(\alpha R)} \right) \neq J_{\frac{d}{2}}(\kappa R_{*}) \left( \frac{\kappa Y_{\frac{d}{2}}(\kappa R)}{\alpha K_{\frac{d}{2}}(\alpha R)} - \frac{Y_{\frac{d-2}{2}}(\kappa R)}{K_{\frac{d-2}{2}}(\alpha R)} \right), 
\end{equation*}
which contradicts with \eqref{eq:solution-R-star}. 

\medskip 

Combining these two cases, we conclude \eqref{eq:to-do}, and we know that $u_{*}^{\rm rad} > 0$. Finally, Using \Cref{rem:simple-obervations}, one sees that 
\begin{equation*}
\mF_{*}(u_{*}^{\rm rad}) =\kappa^{2} \int_{B_{R_{*}}} \, \rmd x = -\kappa^{2}\omega_{d}R_{*}^{d}, 
\end{equation*}
and we complete the proof. 
\end{proof}

\begin{example}[see \Cref{fig:radially-symmetric-minimizer} below]\label{exa:minimizer}
When $d=3$, the radially symmetric global minimizer $u_{*}^{\rm rad}$ from \Cref{lem:radially-symmetric-minimizer} can be expressed using elementary functions: 
\begin{equation}
u_{*}^{\rm rad}(x) = \left\{\begin{aligned}
& 1 && \text{for all $x\in B_{R_{*}}$,} \\ 
& a \sqrt{\frac{2}{\pi}} \frac{\sin(\kappa\abs{x})}{\kappa\abs{x}} - b \sqrt{\frac{2}{\pi}} \frac{\cos(\kappa\abs{x})}{\kappa\abs{x}} && \text{for all $x\in B_{R}\setminus\overline{B_{R_{*}}}$,} \\ 
& c \sqrt{\frac{\pi}{2}} \frac{e^{-\alpha\abs{x}}}{\alpha\abs{x}} && \text{for all $x\in B_{R}^{\complement}$.} 
\end{aligned}\right. \label{eq:global-minimizer-result-3D} 
\end{equation}
with 
\begin{equation*}
\begin{aligned} 
a \sqrt{\frac{2}{\pi}} &= (\kappa R_{*})\sin(\kappa R_{*}) + \cos(\kappa R_{*}) ,\quad b \sqrt{\frac{2}{\pi}} = \sin(\kappa R_{*}) -(\kappa R_{*}) \cos(\kappa R_{*}) \\ 
c \sqrt{\frac{\pi}{2}} &= \frac{\alpha R}{e^{-\alpha R}} \left( a \sqrt{\frac{2}{\pi}} \frac{\sin(\kappa R)}{\kappa R} - b \sqrt{\frac{2}{\pi}} \frac{\cos(\kappa R)}{\kappa R} \right), 
\end{aligned} 
\end{equation*}
where $R_{*}$ is the greatest solution of 
\begin{equation*}
\begin{aligned} 
& \left( \sin(\kappa R_{*}) + \frac{\cos(\kappa R_{*})}{\kappa R_{*}} \right) \left( \sin(\kappa R) - \frac{\kappa (\frac{\sin(\kappa R)}{\kappa R}-\cos(\kappa R))}{\alpha (\frac{1}{\alpha R}+1)} \right) \\
& \quad = \left( \frac{\sin(\kappa R_{*})}{\kappa R_{*}} - \cos(\kappa R_{*}) \right) \left( \frac{\kappa(\sin(\kappa R)-\frac{\cos(\kappa R)}{\kappa R})}{\alpha (\frac{1}{\alpha R}+1)} + \cos(\kappa R) \right). 
\end{aligned} 
\end{equation*}
within $(0, R)$. 
\end{example}

\begin{figure}[H]
\begin{subfigure}[b]{0.35\textwidth}
\centering
\includegraphics[width=\linewidth]{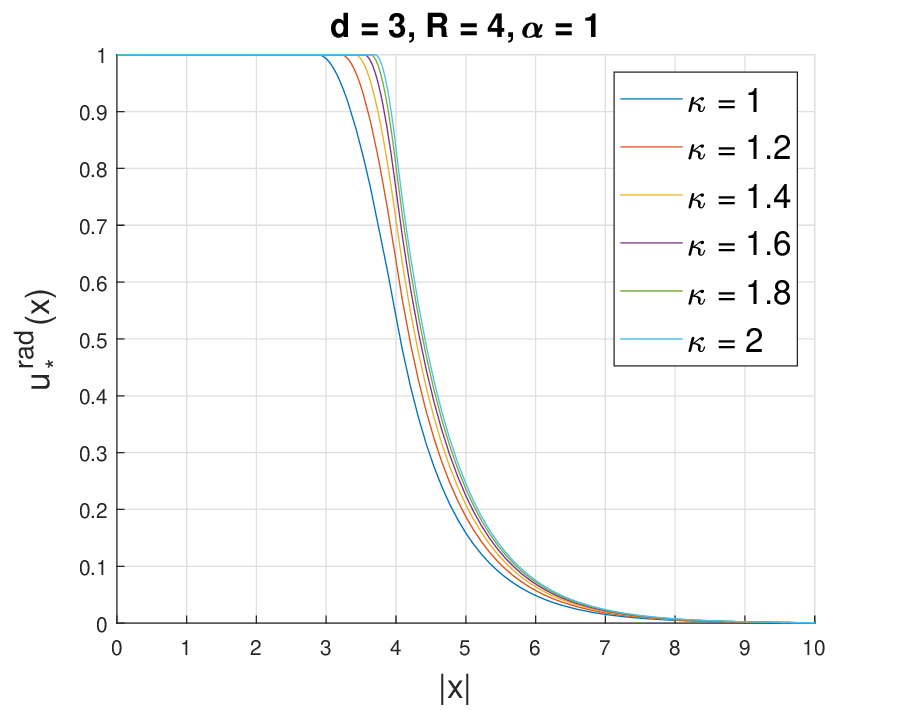}
\end{subfigure}
\begin{subfigure}[b]{0.35\textwidth}
\centering
\includegraphics[width=\linewidth]{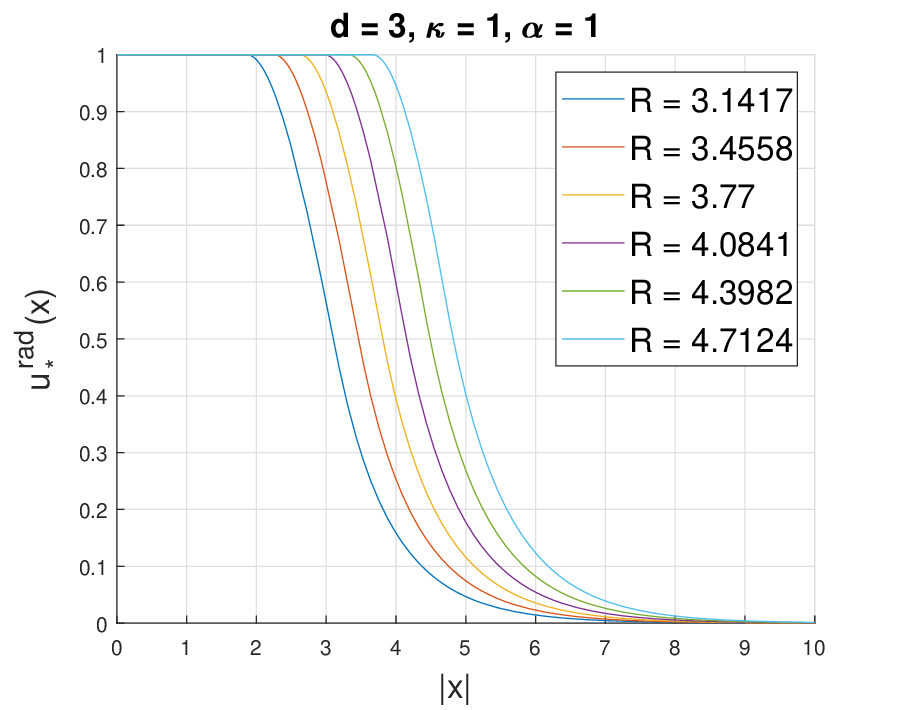}
\end{subfigure}
\begin{subfigure}[b]{0.35\textwidth}
\centering
\includegraphics[width=\linewidth]{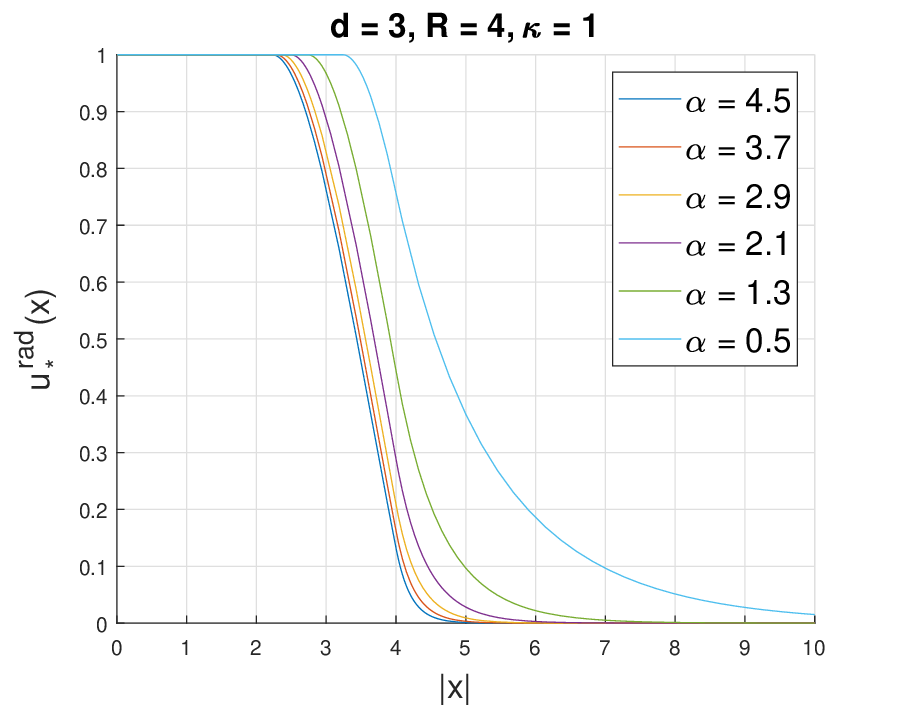}
\end{subfigure}

\caption{Plot of \Cref{exa:minimizer}}
\label{fig:radially-symmetric-minimizer}
\end{figure}

\section{Numerical examples: Gradient descent method\label{sec:numerical}}

This section presents numerical examples of minimizers of \eqref{eq:main1} described in \Cref{thm:existence-minimizer}, as analytical examples are difficult to obtain. Here, we additionally assume that there exists $\tilde{r}>0$ such that $B_{\tilde{r}}\subset G$ and $\beta^{2} > \lambda^{*}(B_{\tilde{r}}) = j_{\frac{d-2}{2},1}^{2}\tilde{r}^{-2}$, where $j_{\alpha,m}$ is the $m^{\rm th}$ positive zero of the Bessel function $J_{\alpha}$ of the first kind. We  consider the potential $\tilde{V}\in L^{\infty}(\mR^{d})$ given by 
\begin{equation*}
\tilde{V}=\alpha_{0}^{2}\equiv \sup_{\mR^{d}}V \text{ for all $x\in B_{\tilde{r}}^{\complement}$} ,\quad \tilde{V}=-\beta^{2} \text{ for all $x\in B_{\tilde{r}}$.} 
\end{equation*}
It is easy to see that $V\le\tilde{V}$ in $\mR^{d}$. Arguing as in \Cref{lem:radially-symmetric-minimizer} we see that $\mF_{\tilde{V}}$ admits a radially symmetric global minimizer $\tilde{u}^{\rm rad}$ in $\mK$ satisfying  
\begin{equation*}
\tilde{u}^{\rm rad}(x) \sim c\abs{x}^{-\frac{d-1}{2}}e^{-\alpha_{0}\abs{x}} \quad \text{as $\abs{x}\rightarrow\infty$.}
\end{equation*}
for some positive constant $c>0$. Using \Cref{lem:comparison-functionals}\ref{itm:lower-boun-control} with $(u_{1},V_{1})=(u_{*},V)$ and $(u_{2},V_{2})=(\tilde{u}^{\rm rad},\tilde{V})$, where $u_{*}$ is the global minimizer given in \Cref{thm:existence-minimizer} above, then we see that  $\tilde{u}_{*}=\max\{u_{*},\tilde{u}^{\rm rad}\}$ is also a minimizer of \eqref{eq:main1}.

For simplicity, we restrict ourselves for the case when $d=2$, and assume that $V$ takes the form 
\begin{equation*}
V(x) = \alpha^{2} \quad \text{for a.e. $x\in G^{\complement}$} ,\quad V(x) = -\beta^{2} \quad \text{for a.e. $x\in G$}, 
\end{equation*}
provided $G \supset B_{\tilde{r}}$ and $\beta^{2} > \lambda^{*}(B_{\tilde{r}}) = j_{0,1}^{2}\tilde{r}^{-2}$ for some $\tilde{r}>0$. Write $R>0$ such that $G\subset B_{R}$. We are now concerned with minimizers $u_{*} \in C^{1}$ of \eqref{eq:main1} with 
\begin{equation}
u_{*}^{\rm lower} \le u_{*} \le u_{*}^{\rm upper}, \label{eq:minimziers-to-estimate}
\end{equation}
where $u_{*}^{\rm lower}$ and $u_{*}^{\rm are}$ radially symmetric function described in \Cref{lem:radially-symmetric-minimizer}: 
\begin{equation*}
u_{*}^{\rm lower}(x) = \left\{\begin{aligned}
& 1 \quad \text{for all $x\in B_{\tilde{r}_{*}}$}, \\ 
& -\frac{\pi}{2}(\beta \tilde{r}_{*})Y_{1}(\beta \tilde{r}_{*})J_{0}(\beta\abs{x}) + \frac{\pi}{2}(\beta \tilde{r}_{*}) J_{1}(\beta \tilde{r}_{*})Y_{0}(\beta\abs{x}) \quad \text{for all $x\in B_{\tilde{r}}\setminus\overline{B_{\tilde{r}_{*}}}$} \\ 
& \frac{1}{K_{0}(\alpha R)}\left( -\frac{\pi}{2}(\beta \tilde{r}_{*})Y_{1}(\beta \tilde{r}_{*})J_{0}(\beta r) + \frac{\pi}{2}(\beta \tilde{r}_{*}) J_{1}(\beta \tilde{r}_{*})Y_{0}(\beta r) \right) K_{0}(\alpha \abs{x}) \quad \text{o.w.} 
\end{aligned}\right.
\end{equation*}
where $\tilde{r}_{*}$ is the largest zero in $(0,\tilde{r})$ of 
\begin{equation*}
-Y_{\frac{d}{2}}(\kappa \tilde{r}_{*}) \left( \frac{J_{\frac{d-2}{2}}(\kappa \tilde{r})}{K_{\frac{d-2}{2}}(\alpha \tilde{r})} - \frac{\kappa J_{\frac{d}{2}}(\kappa \tilde{r})}{\alpha K_{\frac{d}{2}}(\alpha \tilde{r})} \right) = J_{\frac{d}{2}}(\kappa \tilde{r}_{*}) \left( \frac{\kappa Y_{\frac{d}{2}}(\kappa \tilde{r})}{\alpha K_{\frac{d}{2}}(\alpha \tilde{r})} - \frac{Y_{\frac{d-2}{2}}(\kappa \tilde{r})}{K_{\frac{d-2}{2}}(\alpha \tilde{r})} \right). 
\end{equation*}
and $u_{*}^{\rm upper}$ has a similar form, with $(\tilde{r}_{*}, \tilde{r})$ replaced by $(R_{*}, R)$. We implement our numerical experiment with $\alpha=5$, $\beta=3$, $\tilde{r}=1$ and $R=3$, see \Cref{fig:LB-UB}. 

\begin{figure}[H]
\centering
\includegraphics[width=0.75\linewidth]{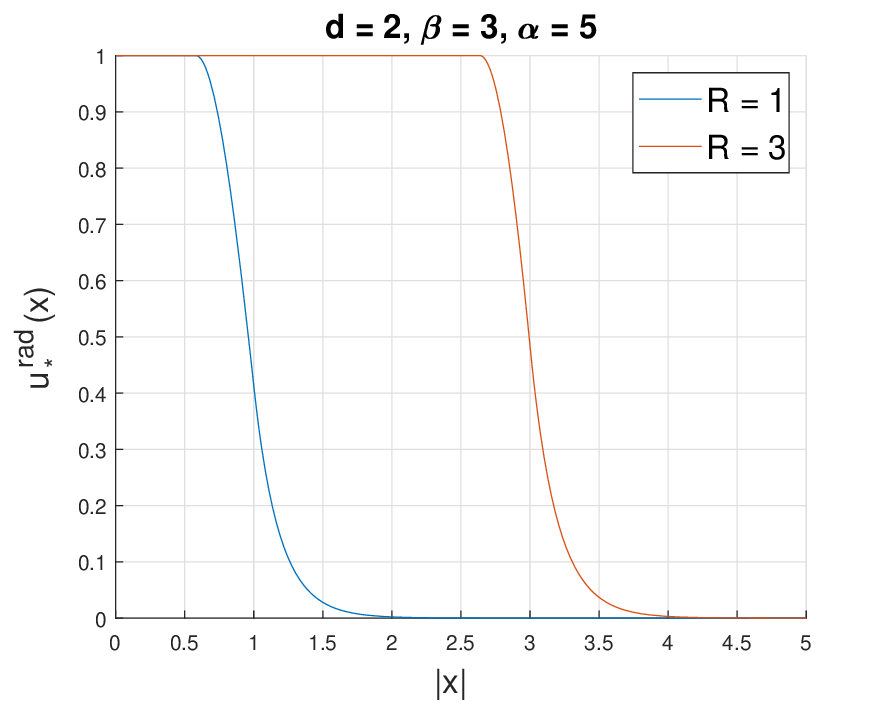}
\caption{Plot of $u_{*}^{\mathrm{lower}}$ and $u_{*}^{\mathrm{upper}}$ as functions of $\lvert x \rvert$}
\label{fig:LB-UB}
\end{figure}

By using \Cref{thm:FB} and \eqref{eq:minimziers-to-estimate}, we see that 
\begin{equation*}
(-\Delta + V\chi_{\{u_{*}<1\}})u_{*} = 0 \text{ in $B_{T}$} ,\quad u_{*}|_{\partial B_{T}} \approx 0  
\end{equation*}
for large $T>0$. We consider a gradient descent method based on the implicit Euler scheme: Given initial guess $u^{(1)}=u_{*}^{\rm upper}$, we construct a sequence of functions $\{u^{(m)}\}_{m\in\mN}$, given by 
\begin{equation*}
u^{(m)} := \max\{\min\{u^{(m)},u_{*}^{\rm upper}\},u_{*}^{\rm lower}\}, 
\end{equation*}
where 
\begin{equation}
\tilde{u}^{(m+1)} = u^{(m)} - \tau \partial_{u}\mF(u)|_{u=\tilde{u}^{(m+1)}}, \quad \tilde{u}^{(m+1)}|_{\partial B_{T}} = 0, \label{eq:PDE-iteration}
\end{equation}
and 
\begin{equation*}
\partial_{u}\mF(u) = (-\Delta + V\chi_{\{u <1\}})u. 
\end{equation*}
It is possible to solve \eqref{eq:PDE-iteration} using \href{https://www.mathworks.com/help/pde/index.html}{\textsc{Matlab PDE toolbox}}. We choose $T=5$, maximum FEM size $\le 0.1$ and perform $50$ iterations. We present our numerical simulation in \Cref{fig:minimizer}. However, both \eqref{eq:main-Euler-Lagrange} and \eqref{eq:main-Euler-Lagrange} admit a trivial solution, making the analysis of the numerical scheme challenging. We leave this aspect for future work.

\begin{figure}[H]
\begin{subfigure}[b]{0.45\textwidth}
\centering
\includegraphics[width=\linewidth]{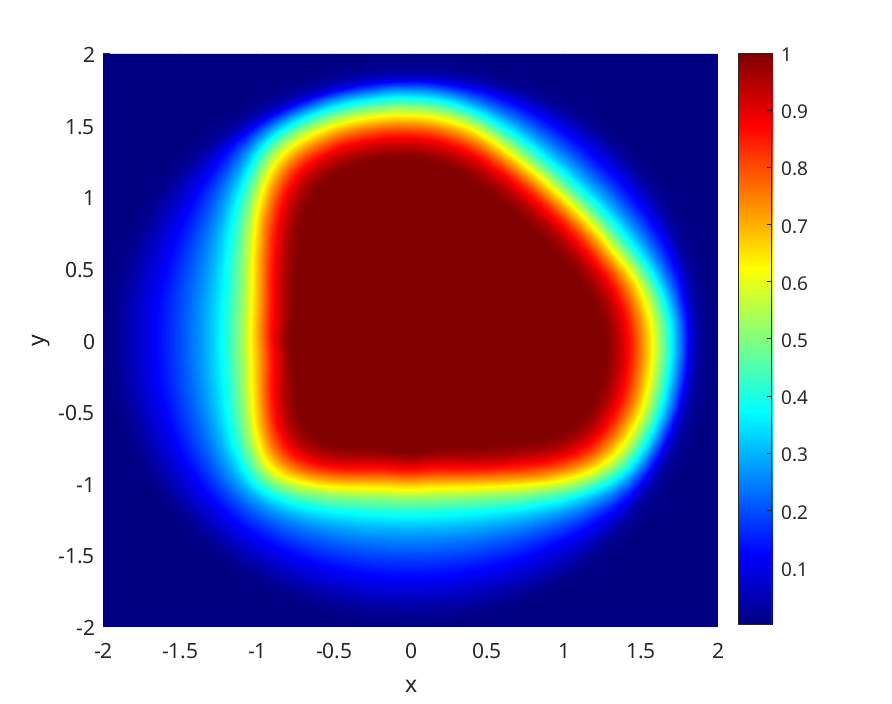} 
\caption{\tiny $G=\{x > -1, y > -1, x+y < 2\}$}
\end{subfigure}
\begin{subfigure}[b]{0.45\textwidth}
\centering
\includegraphics[width=\linewidth]{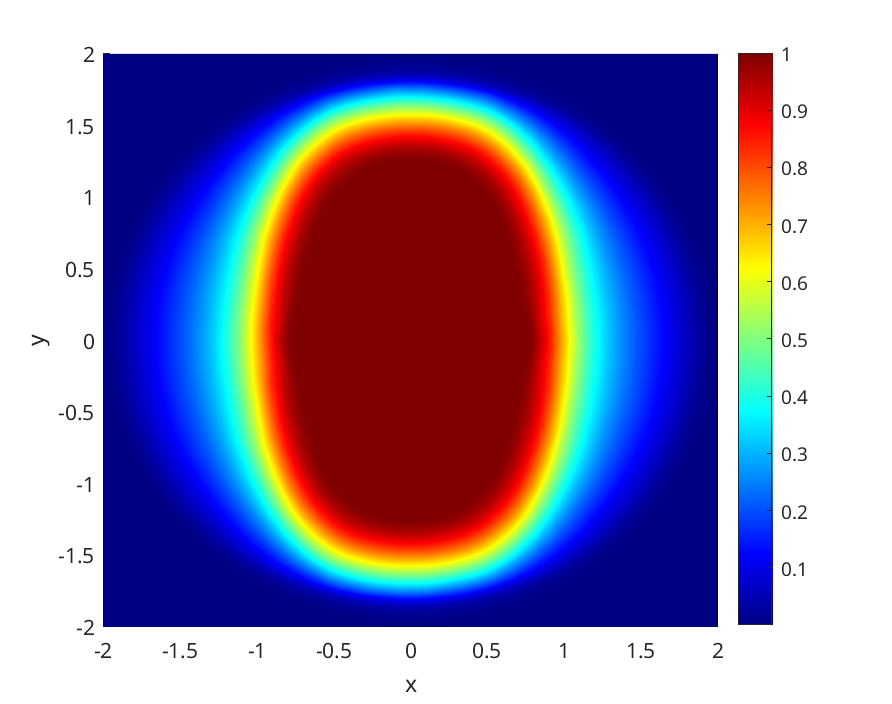}
\caption{\tiny $G=\{x^2+(y/3)^{2}<1\}$}
\end{subfigure}
\begin{subfigure}[b]{0.45\textwidth}
\centering
\includegraphics[width=\linewidth]{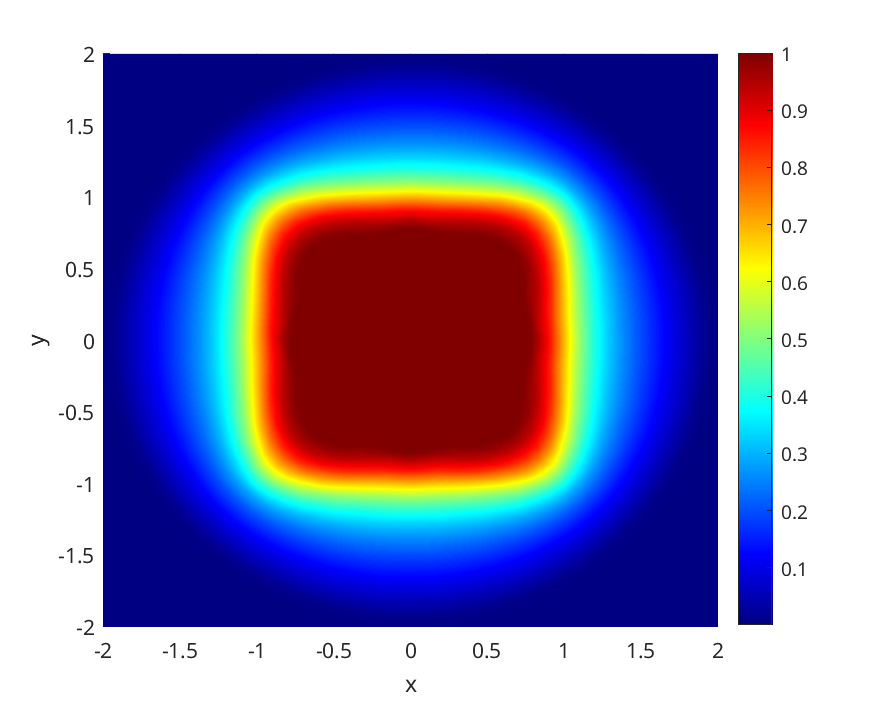}
\caption{\tiny $G=\{-1<x<1,-1<y<1\}$}
\end{subfigure}
\begin{subfigure}[b]{0.45\textwidth}
\centering
\includegraphics[width=\linewidth]{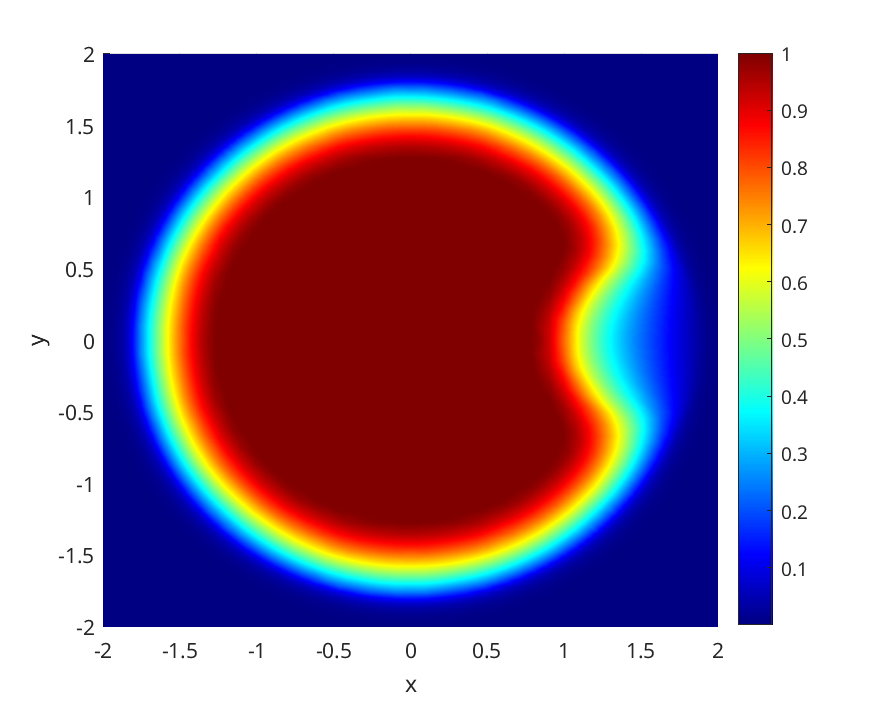}
\caption{\tiny $G=B_{2}\setminus B_{1/2}(3/2)$}
\end{subfigure}

\caption{Approximation of $\tilde{u}_{*}$}
\label{fig:minimizer}
\end{figure}

\subsection*{Acknowledgments}
\addtocontents{toc}{\SkipTocEntry}

This project originated from Kow's three visits to the co-authors at Kanazawa University in March and August 2025, and January 2026, all generously supported by the university. 

\subsection*{Funding} 
\addtocontents{toc}{\SkipTocEntry} 

Kow was supported by the National Science and Technology Council of Taiwan Grant Number NSTC 112-2115-M-004-004-MY3. Kimura was partially supported by JSPS KAKENHI Grant Numbers JP24H00184, JP25K00920. Ohtsuka was partially supported by JSPS KAKENHI Grant Number JP24K06794.

\appendix 
\crefalias{section}{appendix}

\section{Comparison principle and Harnack inequalities\label{sec:harnack}} 

We make use of the following comparison principle. 

\begin{lemma}\label{lem:comparison-principle}
Let $\Omega$ be a Lipschitz domain and let $V\in L^{\infty}(\Omega)$ with $V\ge 0$ in $\Omega$. If $u\in H^{1}(\Omega)$ satisfies $-\Delta u + Vu\ge 0$ in $\Omega$ and $u|_{\partial\Omega}\ge 0$, then $u\ge 0$ in $\Omega$. If we additionally assume that $u\in C(\Omega)$, then $u\equiv 0$ or $u>0$ in $\Omega$. 
\end{lemma}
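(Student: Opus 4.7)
The plan is to proceed in two steps that mirror the two assertions of the lemma. For the first assertion, I would interpret the distributional inequality $-\Delta u + Vu \ge 0$ in the weak $H^{1}$-sense: for every nonnegative $\phi \in H_{0}^{1}(\Omega)$,
\begin{equation*}
\int_{\Omega} \bigl( \nabla u \cdot \nabla \phi + V u \phi \bigr) \,\rmd x \ge 0.
\end{equation*}
The natural test function is $\phi := u_{-} = \max\{-u,0\}$. Since $u|_{\partial \Omega} \ge 0$ in the trace sense and $\Omega$ is Lipschitz, the standard truncation lemma (as in \cite[Theorem~A.1]{KS00IntroductionVariationalInequalities}, already used in the proof of \Cref{thm:Lewy-Stampacchia}) yields $u_{-} \in H_{0}^{1}(\Omega)$ together with the chain-rule identities $\nabla u \cdot \nabla u_{-} = -\lvert \nabla u_{-} \rvert^{2}$ and $u \cdot u_{-} = -\lvert u_{-} \rvert^{2}$ a.e. in $\Omega$. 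Substituting yields
\begin{equation*}
0 \le \int_{\Omega} \bigl( -\lvert \nabla u_{-} \rvert^{2} - V\lvert u_{-} \rvert^{2} \bigr) \, \rmd x \le -\int_{\Omega} \lvert \nabla u_{-} \rvert^{2} \,\rmd x \le 0,
\end{equation*}
where the second inequality uses $V \ge 0$. Hence $\nabla u_{-} = 0$ a.e., and the Poincaré inequality on $H_{0}^{1}(\Omega)$ forces $u_{-} \equiv 0$, i.e. $u \ge 0$ in $\Omega$.

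For the second assertion, under the added hypothesis $u \in C(\Omega)$, I would invoke the strong minimum principle for the operator $L = -\Delta + V$ with $V \ge 0$ and $V \in L^{\infty}(\Omega)$ (see, e.g., \cite[Theorem~8.19]{GT01Elliptic} applied to $-u$, or the classical Hopf--Oleinik maximum principle). Since $u$ is a continuous nonnegative $L$-supersolution, if $u$ attains the value $0$ at some point $x_{0}\in\Omega$, then $x_{0}$ is an interior non-positive minimum, and the strong minimum principle forces $u$ to be constant on the connected component of $\Omega$ containing $x_{0}$; combined with the connectedness of $\Omega$ (implicit in ``domain''), this yields $u \equiv 0$ in $\Omega$. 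Otherwise $u > 0$ throughout $\Omega$.

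I expect no significant obstacle here: both ingredients (the $u_{-}$-testing trick and the strong minimum principle for $-\Delta + V$ with bounded nonnegative $V$) are standard. The only subtle point to verify is the admissibility of $u_{-}$ as a test function, which relies on the Lipschitz regularity of $\partial \Omega$ to ensure that $u|_{\partial \Omega} \ge 0$ in the trace sense implies $u_{-} \in H_{0}^{1}(\Omega)$; this is precisely the reason the lemma is stated for Lipschitz domains.
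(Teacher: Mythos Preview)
Your proposal is correct and matches the paper's approach exactly: both test the weak inequality against $u_{-}\in H_{0}^{1}(\Omega)$ for the first assertion and invoke the strong minimum principle via \cite[Theorems~8.18--8.19]{GT01Elliptic} for the second. One small caveat: since the lemma is applied in the paper to unbounded domains (e.g.\ $\overline{B_{R}}^{\complement}$), the last step of the first part should rely on $\nabla u_{-}=0$ together with $u_{-}\in L^{2}(\Omega)$ on a connected set of infinite measure, rather than on a Poincar\'e inequality.
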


\begin{proof}[Sketch of the proof]
Since $u_{-}\in H_{0}^{1}(\Omega)$, we see that 
\begin{equation*}
0 \le \int_{\Omega} (\nabla u\cdot\nabla u_{-} + Vuu_{-})\,\rmd x = -\int_{\Omega} (\abs{\nabla u_{-}}^{2} + V\abs{u_{-}}^{2})\,\rmd x,  
\end{equation*}
showing that $u_{-}\equiv 0$ in $\Omega$, that is, $u\ge 0$ in $\Omega$. 

We now make the additional assumption that $u$ is continuous. In this case, the set $\Omega_{0}:=\{x\in\Omega:u(x)=0\}$ is relatively close in $\Omega$. If $\Omega_{0}=\emptyset$, then $u>0$ in $\Omega$. Otherwise, let $y\in \Omega_{0}$, then using a version of the Harnack inequality \cite[Theorem~8.18]{GT01Elliptic}, it follows $B_{R}(y)\subset\Omega_{0}$ for some $R>0$ by the arguments in \cite[Theorem~8.19]{GT01Elliptic}, therefore the set $\Omega_{0}$ is relatively open in $\Omega$. Since $\Omega$ is connected and $\Omega_{0}\neq\emptyset$, we conclude that $\Omega=\Omega_{0}$, that is, $u\equiv 0$ in $\Omega$. 
\end{proof}

For clarity, here we recall a variant of the Harnack inequality. 

\begin{lemma}[{\cite[Lemma~2.4]{GS96FreeBoundaryPotential}}]\label{lem:Harnack}
Assume $w\in H^{1}(B_{r})$, $w\ge 0$ on $\partial B_{r}$. 
\begin{enumerate}
\renewcommand{\labelenumi}{\theenumi}
\renewcommand{\theenumi}{\rm (\alph{enumi})} 
\item \label{itm:Harnack-a} If there exists a constant $M_{1}\ge 0$ such that $\Delta w \le M_{1}$ in $B_{r}$, then 
\begin{equation*}
w(x) \ge r^{d} \frac{r-\abs{x}}{(r+\abs{x})^{d-1}}\left(\frac{1}{r^{2}} \dashint_{\partial B_{r}} w \,\rmd S - \frac{2^{d-1}M_{1}}{d} \right) \quad \text{for all $x\in B_{r}$.} 
\end{equation*} 
\item \label{itm:Harnack-b} If there exists a constant $M_{2} \ge 0$ such that $\Delta w \ge -M_{2}$ in $B_{r}$, then 
\begin{equation*}
w(x) \le r^{d} \frac{r+\abs{x}}{(r-\abs{x})^{d-1}}\left(\frac{1}{r^{2}} \dashint_{\partial B_{r}} w \,\rmd S + \frac{M_{2}}{2d} \right) \quad \text{for all $x\in B_{r}$.} 
\end{equation*}
\item \label{itm:Harnack-c} If there exists a constant $M_{3}\ge 0$ such that 
$\abs{\Delta w}\le M_{3}$ in $B_{r}$, then 
\begin{equation*}
\abs{\nabla w(0)} \le  \frac{d}{r}\dashint_{\partial B_{r}} w \,\rmd S + \frac{M_{3}}{d+1}r . 
\end{equation*}
\end{enumerate} 
\end{lemma}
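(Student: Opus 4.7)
The plan is to reduce each of the three estimates to the Poisson integral representation for harmonic functions on $B_r$ with non-negative boundary values. The main device will be the quadratic comparison function $\psi(x) = \frac{c(|x|^2 - r^2)}{2d}$ (or $\frac{c|x|^2}{2d}$), which satisfies $\Delta\psi = c$: adding or subtracting a suitable multiple of $\psi$ from $w$ converts it into a sub- or super-harmonic function, to which one can apply the Poisson integral formula together with the elementary two-sided bounds
\begin{equation*}
\frac{r-|x|}{d\omega_d\, r(r+|x|)^{d-1}} \le P_r(x,y) \le \frac{r+|x|}{d\omega_d\, r(r-|x|)^{d-1}} \qquad (|y|=r)
\end{equation*}
on the Poisson kernel of $B_r$.

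For part (b), I would set $\psi(x) = \frac{M_2 |x|^2}{2d}$; then $\Delta(w+\psi) \ge 0$, so $w+\psi$ is subharmonic in $B_r$ with boundary trace $w|_{\partial B_r} + \frac{M_2 r^2}{2d} \ge 0$. Denoting by $H$ the Poisson extension of $(w+\psi)|_{\partial B_r}$, the subharmonic comparison $w \le w+\psi \le H$ together with the upper Poisson-kernel bound yields the stated inequality, since the boundary average becomes $\dashint_{\partial B_r} w\,dS + \frac{M_2 r^2}{2d}$. For part (a) I would symmetrically take $\psi(x) = \frac{M_1(|x|^2 - r^2)}{2d}$, which vanishes on $\partial B_r$ and satisfies $\Delta\psi = M_1$, so that $w - \psi$ is superharmonic with the same non-negative boundary trace as $w$. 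The superharmonic minimum comparison against the Poisson extension of $w|_{\partial B_r}$, combined with the lower Poisson-kernel bound and the trivial estimate $(r+|x|)^d \le (2r)^d$, then delivers the claimed lower bound; the factor $2^{d-1}$ in the stated constant is precisely what the last elementary estimate contributes.

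For part (c), the natural decomposition is $w = H + v$, where $H$ is the harmonic extension of $w|_{\partial B_r}$ and $v$ solves $\Delta v = \Delta w$ in $B_r$ with $v|_{\partial B_r} = 0$. Since $H \ge 0$ on $B_r$ by the maximum principle, differentiating the Poisson integral at the origin gives $\nabla_x P_r(0,y) = y/(\omega_d r^{d+1})$ for $|y|=r$, hence $|\nabla H(0)| \le \frac{d}{r}\dashint_{\partial B_r} w\,dS$, which accounts for the first term of the target bound. For $v$, I plan to use the Dirichlet Green's representation $v(x) = -\int_{B_r} G(x,y)\Delta w(y)\,dy$ together with the explicit formula $\nabla_x G(0,y) = \frac{y}{d\omega_d}\bigl(|y|^{-d} - r^{-d}\bigr)$, obtained via the Kelvin-transform formula for $G$ on the ball; the corresponding radial integral reduces to $\int_0^r(1 - \rho^d/r^d)\,d\rho = \frac{dr}{d+1}$, producing the correct dependence on $M_3$ and $r$. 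The main obstacle will be recovering the exact constant $\frac{M_3}{d+1}r$ in the stated bound: a naive triangle inequality in the Green's representation produces a constant that is a dimension-dependent multiple of the target, so a more refined directional estimate, exploiting that $\nabla_x G(0,y)$ is parallel to $y$ together with an integration-by-parts identity using $v|_{\partial B_r}=0$, will likely be needed to match the stated constant.
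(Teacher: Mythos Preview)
Your approach is essentially the same as the paper's. For (a) and (b) the paper likewise adds the quadratic $\frac{M}{2d}(r^{2}-|x|^{2})$ to $w$, applies the weak maximum principle against the Poisson extension $P[w|_{\partial B_{r}}]$, and then reads off the stated inequalities from the elementary bounds on the Poisson kernel; your two choices of $\psi$ reduce to exactly this after noting that constants are harmonic. For (c) the paper also splits $w$ into the Poisson extension of its boundary trace plus the Dirichlet--Green potential of $\Delta w$, uses the same formula $\nabla_{x}G(0,y)=\frac{y}{d\omega_{d}}(|y|^{-d}-r^{-d})$, and obtains the two displayed terms. Your caution about the constant is well placed: the paper's sketch simply asserts the bound $\frac{M_{3}}{d+1}r$ for the Green-function term without supplying the ``refined directional estimate'' you anticipate, so on this point your outline is at least as detailed as the paper's.
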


\begin{remark}
We note that the function $w(x)$ appearing in \ref{itm:Harnack-a} and \ref{itm:Harnack-b} is defined pointwise by choosing its lower/upper semicontinuous representative, following \cite[Theorem~4.1.8]{Hoermander_book_1}. 
\end{remark}

\begin{remark}
Since the original reference \cite{GS96FreeBoundaryPotential} lacks the proof, we briefly outline it for the case $w\in C(\overline{B_r})$, which suffices for our purposes. Here we also slightly refine the estimate in \ref{itm:Harnack-c}. 
We begin by recalling the classical Poisson integral formula \cite[Theorem~2.6]{GT01Elliptic}:
\begin{equation*}
P[\varphi](x):=
\left\{
\begin{aligned} 
& r^{d-2}(r^2-|x|^2)\dashint_{\partial B_{r}}\frac{\varphi(y)}{|x-y|^d}\,\rmd S_y \ , && \text{for all $x\in B_{r}$,} \\
& \varphi(x) , && \text{for all $x\in \partial B_{r}$,}
\end{aligned} 
\right.
\end{equation*}
which belongs to $C^2(B_{r})\cap C(\overline{B_{r}})$  and satisfies $\Delta P[\varphi] = 0$ in $B_{r}$. Applying the weak maximum principle \cite[Theorem~8.1]{GT01Elliptic} to a superharmonic function $w(x)-P[w|_{\partial B_r}](x)+\frac{M_1}{2d}(r^2-|x|^2)$, we obtain
\begin{equation*}
  w(x)\geq P[w|_{\partial B_r}](x)-\frac{M_1}{2d}(r^2-|x|^2) \geq \frac{r^{d-2}(r^2-|x|^2)}{(r+|x|)^d}\dashint_{\partial B_{r}}w\,\rmd S_y-\frac{M_1}{2d}(r^2-|x|^2)
\end{equation*}
since $w\geq 0$. This yields \Cref{lem:Harnack}\ref{itm:Harnack-a}. Similarly, applying the weak maximum principle to a subharmonic function $w(x)-P[w|_{\partial B_r}](x)-\frac{M_2}{2d}(r^2-|x|^2)$, we obtain
\begin{equation*}
  w(x)\leq P[w|_{\partial B_r}](x)+\frac{M_2}{2d}(r^2-|x|^2) \leq \frac{r^{d-2}(r^2-|x|^2)}{(r-|x|)^d}\dashint_{\partial B_{r}}w\,\rmd S+\frac{M_2}{2d}(r^2-|x|^2),
\end{equation*}
which yields \Cref{lem:Harnack}\ref{itm:Harnack-b}. For \Cref{lem:Harnack}\ref{itm:Harnack-c}, we may assume $w\in C^1(B_r(0))$ since $\Delta w\in L^\infty(B_r)$. Using the Green function $G(x,y)$ of $\Delta$ with a Dirichlet boundary condition, we compute: 
\begin{equation*}
\nabla w(0)=\int_{B_r(0)}\nabla_x G(0,y)\Delta w(y)\,\rmd y+\nabla  P[w|_{\partial B_r}](0). 
\end{equation*}
Using the explicit formula for $G(x,y)$ (\cite[(2.23)]{GT01Elliptic}), we obtain 
\begin{align*}
|\nabla w(0)|&\leq\frac{1}{d|B_{1}|}\left|\int_{B_{r}}y\left(\frac{1}{|y|^d}-\frac{1}{r^d}\right)\Delta w(y)\rmd y\right|+\left|\frac{d}{r^2}\dashint_{\partial B_{r}}yw(y)\,\rmd S_y\right|\\
&\leq \frac{M_{3}}{d+1}r+\frac{d}{r}\dashint_{\partial B_{r}}w\,\rmd S,
\end{align*}
which yields \Cref{lem:Harnack}\ref{itm:Harnack-c}.
\end{remark}

We also provide a short proof of Hopf's lemma for weak solutions of superharmonic functions using the mean value inequality for superharmonic functions (\Cref{lem:Harnack}\ref{itm:Harnack-a} with $M_{1}=0$). A version of Hopf's lemma for strong solutions can be found in \cite[Lemma~3.4]{GT01Elliptic}.

\begin{corollary}[Hopf's lemma]\label{cor:Hopf} 
Suppose that $G_{1}$ is a bounded domain satisfying the interior sphere condition at $x_{0}\in\partial G_{1}$, that is, there exist $a\in G_{1}$ and $r>0$ such that $B_{r}(a)\subset G_{1}$ and $\partial B_{r}(a)\cap\partial G_{1}=\{x_{0}\}$. Let $\nu$ denote the unit vector that points from $a$ to $x_{0}$. If $w$ is $C^{1}$ near $x_{0}$, $\Delta w\le 0$ in $G_{1}$ and $w(x)>w(x_{0})$ for all $x\in \partial B_{r}(a)$, then $\nu\cdot\nabla w(x_{0})<0$. 
\end{corollary}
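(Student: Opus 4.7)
The plan is to deduce $\nu \cdot \nabla w(x_0) < 0$ from a quantitative one-sided lower bound of the form $w(x_0 - t\nu) - w(x_0) \ge c\,t$ valid for some $c > 0$ and all sufficiently small $t > 0$. Given such a bound, the $C^1$-regularity of $w$ near $x_0$ yields, upon passing to the limit,
\begin{equation*}
-\nu \cdot \nabla w(x_0) \;=\; \lim_{t \to 0^+} \frac{w(x_0 - t\nu) - w(x_0)}{t} \;\ge\; c \;>\; 0,
\end{equation*}
which is equivalent to the desired conclusion.

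Set $\tilde w := w - w(x_0)$, which is still distributionally superharmonic on $G_1$. The hypothesis (naturally interpreted as requiring strict inequality away from the touching point $x_0$) gives $\tilde w \ge 0$ on $\partial B_r(a)$ with strict inequality a.e. I would then apply \Cref{lem:Harnack}\ref{itm:Harnack-a} to $\tilde w$ on the ball $B_r(a)$ with $M_1 = 0$ to obtain, for every $x \in B_r(a)$,
\begin{equation*}
\tilde w(x) \;\ge\; r^{d-2}\, \frac{r - |x-a|}{(r+|x-a|)^{d-1}}\, C, \qquad C \;:=\; \dashint_{\partial B_r(a)} \tilde w \, dS.
\end{equation*}
Specializing to $x = x_t := x_0 - t\nu$, so that $|x_t - a| = r - t$ by the definition of $\nu$, the inequality becomes
\begin{equation*}
\tilde w(x_t) \;\ge\; r^{d-2}\, \frac{t}{(2r-t)^{d-1}}\, C.
\end{equation*}
For $0 < t \le r$, the right-hand side is bounded below by $C\,t/(2^{d-1} r)$, which supplies the required linear lower bound with $c = C/(2^{d-1} r)$.

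The single nontrivial point is the strict positivity $C > 0$, and this is immediate from the hypothesis: $\tilde w$ is strictly positive on $\partial B_r(a)$ away from the measure-zero singleton $\{x_0\}$, so its spherical average is strictly positive. The only mild technicality I anticipate is interpreting \Cref{lem:Harnack}\ref{itm:Harnack-a} with the lower semicontinuous representative of $w$, as noted in the remark following that lemma; this requires no new ideas. The mechanism of the argument is transparent: the factor $r - |x-a|$ in the mean value inequality is precisely what converts averaged positivity on the sphere into a linear-in-distance pointwise bound at the contact point $x_0$, which is exactly the content of Hopf's lemma.
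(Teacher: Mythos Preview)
Your proposal is correct and follows essentially the same route as the paper: subtract $w(x_0)$, apply \Cref{lem:Harnack}\ref{itm:Harnack-a} with $M_1=0$ on $B_r(a)$, specialize to the radial segment $x_0 - t\nu$, and pass to the limit using the $C^1$ regularity together with the strict positivity of the spherical average. The only cosmetic differences are that the paper first normalizes $a=0$ and takes the limit $t\to 0^+$ directly in the factor $(2r-t)^{1-d}$ rather than bounding it by $(2r)^{1-d}$ beforehand.
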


\begin{proof}
Without loss of generality, it suffices to prove \Cref{cor:Hopf} in the case $a=0$ and $w(x_{0})=0$. Applying the Harnack inequality for superharmonic functions (\Cref{lem:Harnack}\ref{itm:Harnack-a} with $M_{1}=0$), we obtain 
\begin{equation*}
w(x)\ge r^{d-2}\frac{r-\abs{x}}{(r+\abs{x})^{d-1}} \dashint_{\partial B_{r}} w\,\rmd S \quad \text{for all $x\in B_{r}$}
\end{equation*}
For $\epsilon\in(0,r)$, let $x_{-\epsilon}:=x_{0}-\epsilon\nu\in B_{r}$. Then $\abs{x_{\epsilon}}=r-\epsilon$, and hence 
\begin{equation*}
w(x_{-\epsilon})\ge r^{d-2}\frac{\epsilon}{(2r-\epsilon)^{d-1}} \dashint_{\partial B_{r}} w\,\rmd S, 
\end{equation*}
which implies 
\begin{equation*}
\frac{w(x_{0})-w(x_{-\epsilon})}{\abs{x_{0}-x_{-\epsilon}}} = -\frac{w(x_{-\epsilon})}{\epsilon} \le - \frac{r^{d-2}}{(2r-\epsilon)^{d-1}} \dashint_{\partial B_{r}} w\,\rmd S.  
\end{equation*}
Since $w\in C^{1}$ in a neighborhood of $x_{0}$, taking the limit as $\epsilon\rightarrow 0_{+}$ yields 
\begin{equation*}
\nu\cdot\nabla w(x_{0}) = \lim_{\epsilon\rightarrow 0_{+}} \frac{w(x_{0})-w(x_{-\epsilon})}{\abs{x_{0}-x_{-\epsilon}}} \le - \frac{1}{2^{d-1}r} \dashint_{\partial B_{r}} w\,\rmd S < 0 
\end{equation*}
since $w>0$ on $\partial B_{r}(a)$. This completes the proof. 
\end{proof}

\section{Geometry of free boundaries\label{sec:FB}} 

We recall the free boundary results \cite[Theorems~1.9 and 1.10]{SS25vanishingcontrast} (with $m=0$) in the following proposition:

\begin{proposition}\label{prop:FB}
Let $D\subset\mR^{d}$ be a Lipschitz domain such that $0\in\partial D$, and let $f\in C^{\alpha}(B_{1})$ for some $0<\alpha<1$. Suppose that $u\in H_{\rm loc}^{1}(B_{1})$ solves 
\begin{equation*}
\Delta u = f\chi_{D} \text{ in $\mD'(B_{1})$} ,\quad u|_{B_{1}\setminus\overline{D}}=0. 
\end{equation*}
\begin{enumerate}
\item [(a)] If $d=2$ and $\partial D$ is piecewise $C^{1}$, then $\partial D$ is $C^{1}$ near $0$. 
\item [(b)] If $d=2$ and $D$ is convex, then $\partial D$ is $C^{1}$ near $0$. 
\item [(c)] If $d\ge 3$, then $0\in\partial D$ is not an edge point described in \cite[Definition~1.3]{SS25vanishingcontrast}. 
\end{enumerate}
\end{proposition}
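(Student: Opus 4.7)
The plan is essentially bookkeeping: the statement is presented explicitly as a specialization of Theorems~1.9 and 1.10 of \cite{SS25vanishingcontrast} to the case when the lower-order coefficient $m$ vanishes, so the proof reduces to checking that the hypotheses align with their setup and then quoting. First I would match notation and scope. In \cite{SS25vanishingcontrast} one considers solutions in $H^{1}_{\rm loc}(B_{1})$ to an elliptic equation whose right-hand side is supported on a Lipschitz set $D$ and which vanish almost everywhere on $B_{1}\setminus\overline{D}$, the source being Hölder continuous. Taking $m=0$ reproduces exactly the equation $\Delta u = f\chi_{D}$ stated here, and the hypotheses $0\in\partial D$, $D$ Lipschitz, $f\in C^{\alpha}(B_{1})$, and $u|_{B_{1}\setminus\overline{D}}=0$ appear verbatim on both sides. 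Since $\Delta u = f\chi_{D}\in L^{\infty}_{\rm loc}(B_{1})$, standard Calderón--Zygmund regularity places $u$ in $W^{2,p}_{\rm loc}(B_{1})$ for every finite $p$, which is the working regularity used in \cite{SS25vanishingcontrast}.

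Next I would invoke the two results directly. Parts (a) and (b), concerning $d=2$ with $\partial D$ piecewise $C^{1}$ or $D$ convex, are covered by \cite[Theorem~1.9]{SS25vanishingcontrast}, which upgrades such a boundary point of the support of the source to a $C^{1}$ point. Part (c), concerning $d\ge 3$, is covered by \cite[Theorem~1.10]{SS25vanishingcontrast}, which forbids edge points in the sense of \cite[Definition~1.3]{SS25vanishingcontrast}. Because our hypotheses line up pointwise with those of the cited theorems, no additional cut-off, scaling, or compactness argument is needed; the conclusion transfers verbatim.

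The only substantive obstacle on our side is ensuring that the geometric notion of ``edge point'' used in (c) coincides with that of \cite[Definition~1.3]{SS25vanishingcontrast}, which the statement makes explicit. All genuine technical work---blow-up analysis at $0$, an Alt--Caffarelli--Friedman style monotonicity formula adapted to the vanishing-contrast regime, and the classification of global homogeneous solutions that distinguishes corners from smooth points---is performed in \cite{SS25vanishingcontrast} and need not be reproduced here. Consequently the ``proof'' consists of a one-line reduction together with the observation that the $m=0$ hypothesis of our proposition is strictly weaker than the general $m$ treated in that paper.
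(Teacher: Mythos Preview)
Your proposal is correct and matches the paper's own treatment: the paper does not prove this proposition but simply records it as a direct specialization of \cite[Theorems~1.9 and~1.10]{SS25vanishingcontrast} with $m=0$. Your verification that the hypotheses align is appropriate and is all that is required here.
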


\section{Some properties of Bessel Functions\label{sec:Bessel}} 

For the reader's convenience, this appendix summarizes several properties of Bessel functions. 
Let $K_{\upsilon}$ denoted the modified Bessel function of the second kind of order $\upsilon\in\mR$, which is a real-valued function satisfying 
\begin{equation}
\lim_{t\rightarrow+\infty}\frac{K_{\upsilon}(t)}{t^{-\frac{1}{2}}e^{-t}}=\sqrt{\frac{\pi}{2}}, \label{eq:exponential-decay-K}
\end{equation}
see \href{https://dlmf.nist.gov/10.25.E3}{\texttt{DLMF:10.25.E3}}. 
By using \href{https://dlmf.nist.gov/10.30.E2}{\texttt{DLMF:10.30.E2}} and \href{https://dlmf.nist.gov/10.30.E3}{\texttt{DLMF:10.30.E3}}, we see that 
\begin{equation}
\lim_{t\rightarrow 0_{+}}\frac{K_{\upsilon+1}(t)}{K_{\upsilon}(t)} = +\infty. \label{eq:BesselK-limit0}
\end{equation}
By using the facts
\begin{equation}
\begin{aligned}
& \frac{\rmd}{\rmd t}\left(t^{-\upsilon}K_{\upsilon}(t)\right) = -t^{-\upsilon}K_{\upsilon+1}(t) \quad \text{for all $t>0$ and $\upsilon\in\mR$,} \\ 
& \frac{\rmd}{\rmd t}\left(t^{\upsilon}K_{\upsilon}(t)\right) = -t^{\upsilon}K_{\upsilon-1}(t) \quad \text{for all $t>0$ and $\upsilon\in\mR$,}
\end{aligned} \label{eq:differentiation-Bessel}
\end{equation}
which can be found in \href{https://dlmf.nist.gov/10.29.E4}{\texttt{DLMF:10.29.E4}}, we see that  
\begin{equation}
\begin{aligned}
& \Delta \left( \abs{x}^{-\frac{d-2}{2}} K_{\frac{d-2}{2}}(\kappa\abs{x}) \right) = \frac{1}{\abs{x}^{d-1}}\frac{\rmd}{\rmd\abs{x}} \left( \abs{x}^{d-1}\frac{\rmd}{\rmd\abs{x}}\left(\abs{x}^{-\frac{d-2}{2}}K_{\frac{d-2}{2}}(\kappa\abs{x})\right) \right) \\
& \quad = -\kappa \frac{1}{\abs{x}^{d-1}}\frac{\rmd}{\rmd\abs{x}} \left( \abs{x}^{\frac{d}{2}}K_{\frac{d}{2}}(\kappa\abs{x}) \right) = \kappa^{2}\abs{x}^{-\frac{d-2}{2}} K_{\frac{d-2}{2}}(\kappa\abs{x}). 
\end{aligned} \label{eq:screened-Poissson} 
\end{equation}
The identities in \eqref{eq:differentiation-Bessel} and \eqref{eq:screened-Poissson} also hold if $K_{\upsilon}$ is replaced by the modified Bessel function of the first kind $I_{\upsilon}$.

Let $J_{\upsilon}$ denoted the Bessel function of the first kind of order $\upsilon\in\mR$, which is a real-valued function, see \href{https://dlmf.nist.gov/10.2.E2}{\texttt{DLMF:10.2.E2}}. By using the facts
\begin{equation}
\begin{aligned}
& \frac{\rmd}{\rmd t}\left(t^{-\upsilon}J_{\upsilon}(t)\right) = -t^{-\upsilon}J_{\upsilon+1}(t) \quad \text{for all $t>0$ and $\upsilon\in\mR$,} \\ 
& \frac{\rmd}{\rmd t}\left(t^{\upsilon}J_{\upsilon}(t)\right) = t^{\upsilon}J_{\upsilon-1}(t) \quad \text{for all $t>0$ and $\upsilon\in\mR$,}
\end{aligned} \label{eq:differentiation-Bessel-J}
\end{equation}
which can be found in \href{https://dlmf.nist.gov/10.6.E6}{\texttt{DLMF:10.6.E6}}, we see that 
\begin{equation}
\begin{aligned}
& \Delta \left( \abs{x}^{-\frac{d-2}{2}} J_{\frac{d-2}{2}}(\kappa\abs{x}) \right) = \frac{1}{\abs{x}^{d-1}}\frac{\rmd}{\rmd\abs{x}} \left( \abs{x}^{d-1}\frac{\rmd}{\rmd\abs{x}}\left(\abs{x}^{-\frac{d-2}{2}}J_{\frac{d-2}{2}}(\kappa\abs{x})\right) \right) \\
& \quad = -\kappa \frac{1}{\abs{x}^{d-1}}\frac{\rmd}{\rmd\abs{x}} \left( \abs{x}^{\frac{d}{2}}J_{\frac{d}{2}}(\kappa\abs{x}) \right) = -\kappa^{2}\abs{x}^{-\frac{d-2}{2}} J_{\frac{d-2}{2}}(\kappa\abs{x}). 
\end{aligned} \label{eq:Helmholtz}  
\end{equation}
The identities \eqref{eq:differentiation-Bessel-J} and \eqref{eq:Helmholtz} also hold if $J_{\upsilon}$ is replaced by the Bessel function of the second kind $Y_{\upsilon}$. We will also need the following facts: 
\begin{equation*}
\lim_{t\rightarrow 0_{+}}\frac{Y_{0}(t)}{\log t} = \frac{2}{\pi} ,\quad \lim_{t\rightarrow 0_{+}} t^{\upsilon}Y_{\upsilon}(t) = -\frac{2^{\upsilon}\Gamma(\upsilon)}{\pi} \text{ for all $\upsilon>0$}
\end{equation*}
and 
\begin{equation*}
\lim_{t\rightarrow 0_{+}} t^{-\upsilon}J_{\upsilon}(t) = \frac{1}{2^{\upsilon}\Gamma(\upsilon+1)} \quad \text{for all $\upsilon\ge 0$,} 
\end{equation*}
see \href{https://dlmf.nist.gov/10.7.E3}{\texttt{DLMF:10.7.E3}}. 
According to \href{https://dlmf.nist.gov/10.21.E2}{\texttt{DLMF:10.21.E2}}, the first positive zero of $J_{\upsilon}$, denoted as $j_{\upsilon,1}$, is strictly smaller than the first positive zero of $J_{\upsilon+1}$, therefore the continuous mapping $t\in(0,j_{\upsilon,1}) \mapsto \frac{J_{\upsilon+1}(t)}{J_{\upsilon}(t)}$ satisfies 
\begin{equation}
\lim_{t\rightarrow 0_{+}} \frac{J_{\upsilon+1}(t)}{J_{\upsilon}(t)}=0 ,\quad \lim_{t\rightarrow j_{\upsilon,1}} \frac{J_{\upsilon+1}(t)}{J_{\upsilon}(t)} = +\infty \quad \text{for all $\upsilon\ge 0$.} \label{eq:quotient-BesselJ}
\end{equation}
We will also need the following Wronskian, which can be found in \href{https://dlmf.nist.gov/10.5.E2}{\texttt{DLMF:10.5.E2}}: 
\begin{equation}
J_{\upsilon}(t)Y_{\upsilon+1}(t) - J_{\upsilon+1}(t)Y_{\upsilon}(t) = -\frac{2}{\pi t} \quad \text{for all $\upsilon\ge 0$ and $t > 0$.} \label{eq:Wronskian-Bessel}
\end{equation}

\end{sloppypar}

\bibliographystyle{custom}
\bibliography{ref}
\end{document}